\documentclass[twoside,reqno]{amsart}
\usepackage{amssymb,amsmath,amsfonts,amsthm}
\usepackage{enumerate}
\usepackage{amsfonts}
\usepackage{array}

\setlength{\topmargin}{0cm}
\textwidth=16cm
\oddsidemargin=0pt
\evensidemargin=0pt
\textheight=22cm

\newtheorem{theorem}{Theorem}
\newtheorem{remark}[theorem]{Remark}

\newtheorem{proposition}[theorem]{Proposition}
\newtheorem{lemma}[theorem]{Lemma}
\newtheorem{corollary}[theorem]{Corollary}
\newtheorem{definition}[theorem]{Definition}

\newcommand{\R}{{\mathbb R}}

\def\ddue{\partial_{x_2}}
\def\dtre{\partial_{x_3}}
\def\dt{\partial_{t}}

\begin{document}

\title[From $H^1$ to $L^2$]{Characteristic boundary value problems:\\
 Estimates from $H^1$ to $L^2$.}
\author[A.~Morando] {Alessandro Morando}
\address{DICATAM, Sezione di Matematica, \newline \indent
Universit\`a di Brescia,\newline \indent Via Valotti, 9, 25133 BRESCIA, Italy}
\email{alessandro.morando@ing.unibs.it, paolo.secchi@ing.unibs.it, paola.trebeschi@ing.unibs.it}

\author[P.~Secchi]{Paolo Secchi}

\author[P.~Trebeschi]{Paola Trebeschi}

\date{\today}

\begin{abstract}
Motivated by the study of certain non linear free-boundary value problems for hyperbolic systems of partial differential equations arising in Magneto-Hydrodynamics, in this paper we show that an a priori estimate of the solution to certain boundary value problems, in the conormal Sobolev space $H^1_{ tan}$, can be transformed into an $L^2$ a priori estimate of the same problem.
\end{abstract}

\subjclass[2000]{35L40, 35L50, 35L45.}
\keywords{Boundary value problem, characteristic boundary, pseudo-differential operators, anisotropic and conormal Sobolev spaces, Magneto-Hydrodynamics.}

\maketitle

\section{Introduction and main results}\label{int}

The present paper is motivated by the study of certain non linear free boundary value problems for hyperbolic systems of partial differential equations arising in {\it Magneto-Hydrodynamics} (MHD).

The well-posedness of initial boundary value problems for hyperbolic PDEs was studied by Kreiss \cite{kreiss} for systems and Sakamoto \cite{sakamoto1,sakamoto2} for wave equations.
The theory was extended to free-boundary problems for a discontinuity by Majda \cite{majda2,majda1}. He related the discontinuity problem to a half-space problem by adding a new variable that describes the displacement of the discontinuity, and making a change of independent variables that \lq\lq flattens\rq\rq the discontinuity front.
The result is a system of hyperbolic PDEs that is coupled with an equation for the displacement of the discontinuity. Majda formulated analogs of the Lopatinski\u{\i} and uniform Lopatinski\u{\i} conditions for discontinuity problems, and proved a short-time, nonlinear existence and stability result for Lax shocks in solutions of hyperbolic conservation laws that satisfy the uniform Lopatinski\u{\i} condition (see \cite{benzoni-serre, metivier2} for further discussion).




Interesting and challenging problems arise when the discontinuity is weakly but not strongly stable, i.e. the Lopatinski\u{\i} condition only holds in weak form, because surface waves propagate along the discontinuity, see \cite{nappes,hunter}. A general theory for the evolution of such weakly stable discontinuities is lacking.

A typical difficulty in the analysis of  weakly stable problems is the loss of regularity in the a priori estimates of solutions. Short-time existence results have been obtained for various weakly stable nonlinear problems, typically by the use of a Nash-Moser scheme to compensate for the loss of derivatives in the linearized energy estimates, see \cite{alinhac,ChenWang,cs,SeTrNl,trakhinin09arma}.

A fundamental part of the general approach described above is given by the proof of the well-posedness of the linear boundary value problems (shortly written BVPs in the sequel) obtained from linearizing the nonlinear problem (in the new independent variables with \lq\lq flattened\rq\rq boundary) around a suitable basic state. This requires two things: the proof of a linearized energy estimate, and the existence of the solution to the linearized problem.

In case of certain problems arising in MHD, a spectral analysis of the linearized equations, as required by the Kreiss-Lopatinski\u{\i} theory, seems very hard to be obtained because of big algebraic difficulties. An alternative approach for the proof of the linearized a priori estimate is the energy method. This method has been applied successfully to the linearized MHD problems by Trakhinin (see \cite{trakhinin05,trakhinin10} and other references); typically the method gives an a priori estimate for the solution in the conormal Sobolev space $H^1_{ tan}$ (see Section \ref{fs} for the definition of this space) bounded by the norm of the source term in the same function space (or a space of higher order in case of loss of regularity).

Once given the a priori estimate, the next point requires the proof of the existence of the solution to the linearized problem. Here one finds a new difficulty. The classical duality method for the existence of a weak $L^2$ solution requires an a priori estimate for the dual problem (usually similar to the given linearized problem) of the form $L^2-L^2$ (from the data in the interior to the solution, disregarding for simplicity the boundary regularity).
In case of loss of derivatives, when for the problem it is given an estimate of the form $H^{1}-L^2$, one would need an estimate of the form $L^2-H^{-1}$ for the dual problem,
see \cite{coulombel05}.

The existence of a solution directly in $H^1_{ tan}$ would require an a priori estimate for the dual problem in the dual spaces $(H^1_{ tan})'-(H^1_{ tan})'$ (possibly of the form $(H^1_{ tan})'-(H^2_{ tan})'$ in case of loss of regularity), but it is not clear how to get it.

This difficulty motivates the present paper. We show that an a priori estimate of the solution to certain BVPs in the conormal Sobolev space $H^1_{ tan}$ can be transformed into an $L^2$ a priori estimate, with the consequence that the existence of a weak $L^2$ solution can be obtained by the classical duality argument.

The most of the paper is devoted to the proof of this result. In the Appendix we present some examples of free-boundary problems in MHD that fit in the general formulation described below.


\medskip
For a given integer $n\ge 2$, let $\mathbb{R}^{n}_+$ denote the $n-$dimensional positive half-space
$$
\mathbb{R}^{n}_+:=\{x=(x_1,\dots,x_n)\in\mathbb{R}^{n}\,:\,\, x_1>0\}.
$$
We also use the notation $x':=(x_2,\dots,x_{n})$. The boundary of $\mathbb{R}^{n}_+$ will be sistematically identified with $\mathbb{R}^{n-1}_{x'}$.
\newline
We are interested in a boundary value problem  of the following form
\begin{subequations}\label{sbvp}
\begin{align}
\mathcal{L}_\gamma u+\rho_\sharp u=F\,,&\quad{\rm in}\,\,\mathbb{R}^{n}_+\,,\label{sbvp1}\\
b_\gamma\psi+\mathcal M_\gamma u+b_\sharp\psi+\ell_\sharp u=g\,,&\quad{\rm on}\,\,\mathbb{R}^{n-1}\,\label{sbvp2}\,.
\end{align}
\end{subequations}
In \eqref{sbvp1}, $\mathcal L_\gamma$ is the first-order linear partial differential operator
\begin{equation}\label{pdo}
\mathcal L_\gamma=\mathcal L_\gamma(x,D):=\gamma I_N+\sum\limits_{j=1}^{n}A_j(x)\partial_j+B(x)\,,
\end{equation}
where the shortcut $\partial_j:=\frac{\partial}{\partial x_j}$, for $j=1,\dots,n$, is used hereafter and $I_N$ denotes the $N\times N$ identity matrix. The coefficients $A_j=A_j(x)$, $B=B(x)$ ($1\le j\le n$) are $N\times N$ real matrix-valued functions in $C^{\infty}_{(0)}(\mathbb{R}^{n}_+)$, the space of restrictions to $\mathbb{R}^{n}_+$ of functions of $C^{\infty}_0(\mathbb{R}^{n})$\footnote{With a slight abuse, the same notations $C^\infty_{(0)}(\mathbb R^n_+)$, $C^\infty_0(\mathbb R^n)$ are used throughout the paper to mean the space of functions taking either scalar or matrix values (possibly with different sizes). We adopt the same abuse for other function spaces later on.}.
\newline
In \eqref{sbvp2},
\begin{subequations}\label{oprt_bordo}
\begin{align}
b_\gamma=b_\gamma(x',D'):=\gamma b_0(x')+\sum\limits_{j=2}^n b_j(x')\partial_j+\beta(x')\,,\label{oprt_bordo1}\\
\mathcal M_\gamma=\mathcal M_\gamma(x',D'):=\gamma M_0(x')+\sum\limits_{j=2}^n M_j(x')\partial_j+M(x')\label{oprt_bordo2}
\end{align}
\end{subequations}
are first-order linear partial differential operators, acting on the tangential variables $x'\in\mathbb R^{n-1}$; for a given integer $1\le d\le N$, the coefficients $b_j$, $\beta$ and $M_j$, $M$ (for $j=0,2,\dots,n$) are functions in $C^{\infty}_0(\mathbb{R}^{n-1})$ taking values in the spaces $\mathbb{R}^{d}$ and $\mathbb{R}^{d\times N}$ respectively. Finally, $\rho_\sharp=\rho_\sharp(x,Z, \gamma)$ in \eqref{sbvp1} and $b_\sharp=b_\sharp(x',D',\gamma)$, $\ell_\sharp=\ell_\sharp(x',D',\gamma)$ in \eqref{sbvp2} stand for ``lower order operators'' of pseudo-differential type, acting ``tangentially'' on $(u,\psi)$, whose symbols belong to suitable symbol classes introduced in Section \ref{conormalcalculus}. The operators $\rho_\sharp=\rho_\sharp(x,Z, \gamma)$, $b_\sharp=b_\sharp(x',D',\gamma)$, $\ell_\sharp=\ell_\sharp(x',D',\gamma)$ must be understood as some ``lower order perturbations'' of the leading operators $\mathcal L_\gamma$, $b_\gamma$ and $\mathcal M_\gamma$ in the equations \eqref{sbvp}; in the following we assume that the problem \eqref{sbvp}, with given operators $\mathcal L_\gamma$, $b_\gamma$, $\mathcal M_\gamma$, obeys a suitable a priori estimate which has to be ``stable'' under the addition of arbitrary lower order terms $\rho_\sharp u$, $b_\sharp\psi$, $\ell_\sharp u$ in the interior equation \eqref{sbvp1} and the boundary condition \eqref{sbvp2} (see the assumptions $(H)_1$, $(H)_2$ below).
The structure of the operators \eqref{oprt_bordo} and $\rho_\sharp$, $b_\sharp$, $\ell_\sharp$ in \eqref{sbvp} will be better described later on.
\newline
The unknown $u$, as well as the source term $F$, are $\mathbb{R}^N-$valued functions of $x$, the unknown $\psi$ is a scalar function of $x'$ \footnote{In nonlinear free-boundary problems the scalar function $\psi$ describes the displacement of the discontinuity.} and the boundary datum $g$ is an $\mathbb{R}^d-$valued function of $x'$. We may assume that $u$ and $F$ are compactly supported in the unitary $n-$dimensional positive half-cylinder $\mathbb{B}^+:=\{x=(x_1,x')\,:\,\,0\le x_1< 1\,,\,\,|x'|<1\}$. Analogously, we assume that $\psi$ and $g$ are compactly supported in the unitary $(n-1)-$dimensional ball $\mathcal{B}(0;1):=\{|x'|<1\}$. For an arbitrary $0<\delta_0<1$, we also set $\mathbb{B}^+_{\delta_0}:=\{x=(x_1,x')\,:\,\,0\le x_1< \delta_0\,,\,\,|x'|<\delta_0\}$ and $\mathcal{B}(0;\delta_0):=\{|x'|<\delta_0\}$.
\newline
The BVP \eqref{sbvp} has {\it characteristic boundary of constant multiplicity} $1\le r<N$ in the following sense: the coefficient $A_1$ of the normal derivative in $\mathcal L_\gamma$ displays the block-wise structure
\begin{equation}\label{strutturaablocchi}
A_1(x)=\begin{pmatrix}
A_1^{I,I} & A^{I,II}_1\\
A_1^{II,I} & A^{II,II}_1
\end{pmatrix}\,,
\end{equation}
where $A_1^{I,I}$, $A_1^{I,II}$, $A_1^{II,I}$, $A_1^{II,II}$ are respectively $r\times r$, $r\times (N-r)$, $(N-r)\times r$, $(N-r)\times (N-r)$ sub-matrices, such that
\begin{equation}\label{singularity}
A^{I,II}_{1\,|\,x_1=0}=0\,,\quad A^{II,I}_{1\,|\,x_1=0}=0\,,\quad A^{II,II}_{1\,|\,x_1=0}=0\,,
\end{equation}
and $A^{I,I}_1$ is invertible over $\mathbb{B}^+$. According to the representation above, we split the unknown $u$ as $u=(u^I, u^{II})$;  $u^{I}:=(u_1,\dots,u_r)\in\mathbb{R}^r$ and $u^{II}:=(u_{r+1},\dots,u_N)\in\mathbb{R}^{N-r}$ are said to be respectively the {\it noncharacteristic} and the {\it characteristic} components of $u$.
\newline
Concerning the boundary condition \eqref{sbvp2}, we firstly assume that the number $d$ of scalar boundary conditions obeys the assumption $d\le r+1$. As regards to the structure of the boundary operator $\mathcal M_\gamma$ in \eqref{oprt_bordo2}, we require that actually it acts nontrivially only on the noncharacteristic component $u^I$ of $u$; moreover we assume that the first-order leading part $\mathcal M^s_\gamma$ of $\mathcal M_\gamma$ only applies to a {\it subset of components of the non characteristic vector $u^I$}, namely there exists an integer $s$, with $1\le s\le r$, such that the coefficients $M_j$, $M$ of $\mathcal M_\gamma$ take the form
\begin{equation}\label{strutturaMgamma}
M_j=\begin{pmatrix}M^s_j & 0\end{pmatrix}\,,\quad M=\begin{pmatrix}M^I & 0\end{pmatrix}\,,\quad j=0,2,\dots,n\,,
\end{equation}
where the matrices $M^s_j=M^s_j(x')$ ($j=0,2,\dots,n$) and $M^I=M^I(x')$ belong respectively to $\mathbb R^{d\times s}$ and $\mathbb R^{d\times r}$. If we set $u^{I,s}:=(u_1,\dots,u_s)$, then the operator $\mathcal M_\gamma$ in \eqref{oprt_bordo2} may be rewritten, according to \eqref{strutturaMgamma}, as
\begin{equation}\label{Mgamma}
\mathcal M_\gamma u=\mathcal M^s_\gamma u^{I,s}+M^Iu^I\,,
\end{equation}
where $\mathcal M^s_\gamma$ is the first-order leading operator
\begin{equation}\label{leading}
\mathcal M^s_\gamma(x',D'):=\gamma M^s_0+\sum\limits_{j=2}^n M^s_j\partial_j\,.
\end{equation}
As we just said, the operator $\ell_\sharp=\ell_\sharp(x',D',\gamma)$ must be understood as a lower order perturbation of the leading part $\mathcal M^s_\gamma$ of the boundary operator $\mathcal M_\gamma$ in \eqref{Mgamma}; hence, according to the form of $\mathcal M_\gamma$, we assume that $\ell_\sharp$ only acts on the component $u^{I,s}$ of the unknown vector $u$, that is
\begin{equation}\label{ellsharp}
\ell_\sharp(x',D',\gamma)u=\ell_\sharp(x',D',\gamma)u^{I,s}\,.
\end{equation}
A BVP of the form \eqref{sbvp}, under the structural assumptions \eqref{strutturaablocchi}-\eqref{Mgamma}, comes from the study of certain non linear free boundary value problems for {\it hyperbolic} systems of partial differential equations arising in {\it Magneto-hydrodynamics}. Such problems model the motion of a compressible inviscid fluid, under the action of a magnetic field, when the fluid may develop discontinuities along a moving unknown characteristic hypersurface. 
As we already said, to show the local-in-time existence of such a kind of piecewise discontinuous flows, the classical approach consists, firstly, of reducing the original free boundary problem to a BVP set on a fixed domain, performing a nonlinear change of coordinates that sends the front of the physical discontinuities into a fixed hyperplane of the space-time domain. Then, one starts to consider the well posedness of the linear BVP obtained from linearizing the found nonlinear BVP around a basic state provided by a particular solution. The resulting linear problem displays the structure of the problem \eqref{sbvp}, where the unknown $u$ represents the set of physical variables involved in the model, while the unknown $\psi$ encodes the moving discontinuity front. The solvability of the linear BVP firstly requires that a suitable a priori estimate can be attached to the problem.
\newline
Let the operators $\mathcal L_\gamma$, $b_\gamma$, $\mathcal M_\gamma$ be given, with structure described by formulas \eqref{pdo}, \eqref{oprt_bordo1}, \eqref{strutturaablocchi}-\eqref{leading} above. We assume that the two alternative hypotheses are satisfied:
\begin{itemize}
\item[$(H)_1$.]{\bf A priori estimate with loss of regularity in the interior term.} For all symbols $\rho_\sharp=\rho_\sharp(x,\xi,\gamma)\in\Gamma^0$, $b_{\sharp}=b_{\sharp}(x',\xi',\gamma)\in\Gamma^0$ and $\ell_\sharp=\ell_\sharp(x',\xi',\gamma)\in\Gamma^0$, taking values respectively in $\mathbb R^{N\times N}$, $\mathbb R^d$ and $\mathbb R^{d\times s}$, there exist constants $C_0>0$, $\gamma_0\ge 1$, depending only on the matrices $A_j$, $B$, $b_j$, $\beta$, $M_j^s$, $M^I$ in \eqref{pdo}, \eqref{oprt_bordo}, \eqref{Mgamma}, \eqref{leading} and a finite number of semi-norms of $\rho_\sharp$, $b_{\sharp}$, $\ell_\sharp$, such that for all functions $u\in C^\infty_{(0)}(\mathbb{R}^n_+)$, compactly supported on $\mathbb{B}^+$, $\psi\in C^\infty_0(\mathbb{R}^{n-1})$, compactly supported on $\mathcal{B}(0;1)$, and all $\gamma\ge\gamma_0$ the following a priori energy estimate is satisfied
    \begin{equation}\label{h1h2estimate}
    \begin{array}{ll}
    \displaystyle{\gamma\left(||u||^2_{H^1_{ tan\,,\gamma}(\mathbb{R}^n_+)}+||u^I_{|\,x_1=0}||^2_{H^{1/2}_\gamma(\mathbb{R}^{n-1})}\right)+\gamma^2||\psi||^2_{H^1_\gamma(\mathbb{R}^{n-1})}}
    \\\\
    \quad\quad\displaystyle{\le {C_0}\left( \frac{1}{\gamma^3}||F||^2_{H^{2}_{tan,\gamma}(\mathbb{R}^n_+)}+\frac{1}{\gamma}||g||^2_{H^{3/2}_{\gamma}(\mathbb{R}^{n-1})}\right)\,,}
    \end{array}
    \end{equation}
    where $F:=\mathcal L_\gamma u + \rho_\sharp(x,Z,\gamma) u$, $g:=b_\gamma\psi+\mathcal M_\gamma u+b_{\sharp}(x',D',\gamma)\psi+\ell_\sharp (x', D', \gamma)u^{I,s}$ and $\rho_\sharp(x,Z,\gamma)$, $b_\sharp(x',D',
    \gamma)$, $\ell_\sharp(x',D',\gamma)$ are respectively the pseudo-differential operators with symbols $\rho_\sharp$, $b_\sharp$, $\ell_\sharp$.
\item[$(H)_2$.]{\bf A priori estimate without loss of regularity in the interior term.} For all symbols $b_{\sharp}=b_{\sharp}(x',\xi',\gamma)\in\Gamma^0$ and $\ell_\sharp=\ell_\sharp(x',\xi',\gamma)\in\Gamma^0$, taking values respectively in $\mathbb R^d$ and $\mathbb R^{d\times s}$, there exist constants $C_0>0$, $\gamma_0\ge 1$, depending only on the matrices $A_j$, $B$, $b_j$, $\beta$, $M_j^s$, $M^I$ in \eqref{pdo}, \eqref{oprt_bordo}, \eqref{Mgamma}, \eqref{leading} and a finite number of semi-norms of $b_{\sharp}$, $\ell_\sharp$, such that for all functions $u\in C^\infty_{(0)}(\mathbb{R}^n_+)$, compactly supported on $\mathbb{B}^+$, $\psi\in C^\infty_0(\mathbb{R}^{n-1})$, compactly supported on $\mathcal{B}(0;1)$, and all $\gamma\ge\gamma_0$ the following a priori energy estimate is satisfied
    \begin{equation}\label{h1estimate}
    \begin{array}{ll}
    \displaystyle{\gamma\left(||u||^2_{H^1_{ tan\,,\gamma}(\mathbb{R}^n_+)}+||u^I_{|\,x_1=0}||^2_{H^{1/2}_\gamma(\mathbb{R}^{n-1})}\right)+\gamma^2||\psi||^2_{H^1_\gamma(\mathbb{R}^{n-1})}}
    \\\\
    \quad\quad\displaystyle{\le \frac{C_0}{\gamma}\left(||F||^2_{H^{1}_{tan,\gamma}(\mathbb{R}^n_+)}+||g||^2_{H^{3/2}_{\gamma}(\mathbb{R}^{n-1})}\right)\,,}
    \end{array}
    \end{equation}
    where $F:=\mathcal L_\gamma u$, $g:=b_\gamma\psi+\mathcal M_\gamma u+b_{\sharp}(x',D',\gamma)\psi+\ell_\sharp (x', D', \gamma)u^{I,s}$ and $b_\sharp(x',D',
    \gamma)$, $\ell_\sharp(x',D',\gamma)$ are respectively the pseudo-differential operators with symbols $b_\sharp$, $\ell_\sharp$.
\end{itemize}

\vspace{.2cm}
\noindent
The symbol class $\Gamma^0$ and the related pseudo-differential operators will be introduced in Section \ref{conormalcalculus}. The function spaces and the norms involved in the estimates  \eqref{h1h2estimate}, \eqref{h1estimate} will be described in Section \ref{fs}.
\newline
By the hypotheses $(H)_1$ and $(H)_2$, we require that an a priori estimate in the {\it tangential} Sobolev space  (see the next Section \ref{fs} and Definition \ref{spconormalireali} below) is enjoyed by the BVP \eqref{sbvp}. The structure of the estimate is justified by the physical models that we plan to cover (see the Appendix \ref{appB}). The inserting of the zeroth order terms $\rho_\sharp(x,Z,\gamma)u$, $b_\sharp(x',D',\gamma)\psi$, $\ell_\sharp(x',D',\gamma)u^{I,s}$ in the interior source term $F$ and the boundary datum $g$ is a property of {\it stability} of the estimates \eqref{h1h2estimate}, \eqref{h1estimate}, under lower order operators. We notice, in particular, that the addition of $\ell_\sharp(x',D',\gamma)u^{I,s}$ in the boundary condition \eqref{sbvp2} only modifies the zeroth order term $M^Iu^I$ for the part that applies to the components $u^{I,s}$ of the noncharacteristic unknown vector $u^I$, see \eqref{Mgamma}, \eqref{ellsharp}. This behavior of the boundary condition, under lower order perturbations, is inspired by the physical problems to which we address. It happens sometimes that the specific structure of some coefficients involved in the zeroth order part of the original "unperturbed" boundary operator \eqref{Mgamma} is needed in order to derive an a priori estimate of the type \eqref{h1h2estimate} or \eqref{h1estimate} for the corresponding BVP \eqref{sbvp}; hence these coefficients of the boundary operator must be kept unchanged by the addition of some lower order perturbations.
\newline
Note also that the two a priori estimates in \eqref{h1h2estimate}, \eqref{h1estimate} exhibit a different behavior with respect to the interior data: in \eqref{h1h2estimate} a loss of one tangential derivative from the interior data $F$ occurs, whereas in \eqref{h1estimate} no loss of interior regularity is assumed. According to this different behavior, a stability assumption under lower order perturbations $\rho_\sharp$ of the interior operator $\mathcal L_\gamma$ is only required in $(H)_1$.
\newline
Both the estimates exhibit the same loss of regularity from the boundary data.

\vspace{.2cm}
\noindent
The aim of this paper is to prove the following result.
\begin{theorem}\label{mainthm}
Assume that the operators $\mathcal L_\gamma$, $b_\gamma$, $\mathcal M_\gamma$ have the structure described in \eqref{pdo}, \eqref{oprt_bordo1}, \eqref{strutturaablocchi}-\eqref{leading}. Let $0<\delta_0<1$.
\begin{itemize}
\item[1.] If the assumption $(H)_1$ holds true, then for all symbols $\rho_\sharp$, $b_{\sharp}, \ell_\sharp\in\Gamma^0$ there exist constants $\breve{C}_0>0$, $\breve{\gamma}_0\ge 1$, depending only on the matrices $A_j$, $B$, $b_j$, $\beta$, $M_j^s$, $M^I$ in \eqref{pdo}, \eqref{oprt_bordo}, \eqref{Mgamma}, \eqref{leading}, $\delta_0$ and a finite number of semi-norms of $\rho_\sharp$, $b_{\sharp}$, $\ell_\sharp$, such that for all functions $u\in C^\infty_{(0)}(\mathbb{R}^n_+)$, compactly supported on $\mathbb{B}^+_{\delta_0}$, $\psi\in C^\infty_0(\mathbb{R}^{n-1})$, compactly supported on $\mathcal{B}(0;\delta_0)$, and all $\gamma\ge\breve{\gamma}_0$ the following a priori energy estimate is satisfied
    \begin{equation}\label{l2h1estimate}
    \displaystyle{\gamma\left(||u||^2_{L^2(\mathbb{R}^n_+)}+||u^I_{|\,x_1=0}||^2_{H^{-1/2}_\gamma(\mathbb{R}^{n-1})}\right)+\gamma^2||\psi||^2_{L^2(\mathbb{R}^{n-1})}
    \le\breve{C}_0\left(\frac1{\gamma^3}||F||^2_{H^1_{tan,\gamma}(\mathbb{R}^n_+)}+\frac1{\gamma}||g||^2_{H^{1/2}_{\gamma}(\mathbb{R}^{n-1})}\right)\,,}
    \end{equation}
    where $F:=\mathcal L_\gamma u+\rho_\sharp(x,Z,\gamma)u$ and $g:=b_\gamma\psi+\mathcal M_\gamma u+b_{\sharp}(x',D',\gamma)\psi+\ell_\sharp (x', D', \gamma)u^{I,s}$.
\item[2.] If the assumption $(H)_2$ holds true, then for every pair of symbols $b_{\sharp}, \ell_\sharp\in\Gamma^0$ there exist constants $\breve{C}_0>0$, $\breve{\gamma}_0\ge 1$, depending only on the matrices $A_j$, $B$, $b_j$, $\beta$, $M_j^s$, $M^I$ in \eqref{pdo}, \eqref{oprt_bordo}, \eqref{Mgamma}, \eqref{leading}, $\delta_0$ and a finite number of semi-norms of $b_{\sharp}$, $\ell_\sharp$, such that for all functions $u\in C^\infty_{(0)}(\mathbb{R}^n_+)$, compactly supported on $\mathbb{B}^+_{\delta_0}$, $\psi\in C^\infty_0(\mathbb{R}^{n-1})$, compactly supported on $\mathcal{B}(0;\delta_0)$, and all $\gamma\ge\breve{\gamma}_0$ the following a priori energy estimate is satisfied
    \begin{equation}\label{l2estimate}
    \displaystyle{\gamma\left(||u||^2_{L^2(\mathbb{R}^n_+)}+||u^I_{|\,x_1=0}||^2_{H^{-1/2}_\gamma(\mathbb{R}^{n-1})}\right)+\gamma^2||\psi||^2_{L^2(\mathbb{R}^{n-1})}
    \le \frac{\breve{C}_0}{\gamma}\left(||F||^2_{L^2(\mathbb{R}^n_+)}+||g||^2_{H^{1/2}_{\gamma}(\mathbb{R}^{n-1})}\right)\,,}
    \end{equation}
    where $F:=\mathcal L_\gamma u$ and $g:=b_\gamma\psi+\mathcal M_\gamma u+b_{\sharp}(x',D',\gamma)\psi+\ell_\sharp (x', D', \gamma)u^{I,s}$.
\end{itemize}
\end{theorem}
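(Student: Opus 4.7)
My plan is a \emph{reduction-of-order} argument: apply the hypothesis $(H)_i$ not to $(u,\psi)$ directly but to the pair obtained by letting a tangential pseudo-differential operator of order $-1$ act on them. Concretely, let $\Lambda^{-1,\gamma}$ denote the tangential Fourier multiplier with symbol $\lambda^{-1,\gamma}(\xi'):=(\gamma^2+|\xi'|^2)^{-1/2}$, and fix cut-offs $\chi\in C^\infty_0(\mathbb R^n)$ and $\chi_0\in C^\infty_0(\mathbb R^{n-1})$ with $\chi\equiv 1$ on $\mathbb B^+_{\delta_0}$, $\mathrm{supp}\,\chi\subset\mathbb B^+$, $\chi_0\equiv 1$ on $\mathcal B(0;\delta_0)$, $\mathrm{supp}\,\chi_0\subset\mathcal B(0;1)$; the strict inequality $\delta_0<1$ leaves exactly the room needed for these cut-offs. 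Set
\[
\tilde u:=\chi\,\Lambda^{-1,\gamma}u,\qquad \tilde\psi:=\chi_0\,\Lambda^{-1,\gamma}\psi,
\]
both compactly supported in $\mathbb B^+$ and $\mathcal B(0;1)$ respectively, and hence admissible test functions for the hypothesis.

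\medskip

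I would next show that $(\tilde u,\tilde\psi)$ solves a problem of the same form as \eqref{sbvp}, with modified lower-order symbols $\tilde\rho_\sharp,\tilde b_\sharp,\tilde\ell_\sharp\in\Gamma^0$ absorbing the principal parts of the commutators. Using the tangential calculus and the mapping $\Lambda^{-1,\gamma}:H^s_{tan,\gamma}\to H^{s+1}_{tan,\gamma}$, on the left-hand side of $(H)_i$ one has the equivalences
\[
\|\tilde u\|_{H^1_{tan,\gamma}}\sim\|u\|_{L^2},\quad \|\tilde u^I_{|x_1=0}\|_{H^{1/2}_\gamma}\sim\|u^I_{|x_1=0}\|_{H^{-1/2}_\gamma},\quad \|\tilde\psi\|_{H^1_\gamma}\sim\|\psi\|_{L^2},
\]
with lower-order errors, and on the right-hand side the leading contributions satisfy
\[
\|\chi\Lambda^{-1,\gamma}F\|_{H^2_{tan,\gamma}}\lesssim\|F\|_{H^1_{tan,\gamma}},\qquad \|\chi_0\Lambda^{-1,\gamma}g\|_{H^{3/2}_\gamma}\lesssim\|g\|_{H^{1/2}_\gamma},
\]
and analogously in case 2 with $\|F\|_{L^2}$ instead. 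Every commutator involving only tangential operators—this covers all boundary terms and the $A_j\partial_j$ contributions with $j\ge 2$ in $\mathcal L_\gamma$—has tangential order $-2$ by the symbolic calculus, so its principal part can be absorbed into $\tilde\rho_\sharp,\tilde b_\sharp,\tilde\ell_\sharp$ (the assumptions $(H)_i$ being stable exactly under such perturbations), while the residue produces $\|u\|_{L^2}$- or $\|\psi\|_{L^2}$-type norms, absorbable on the left for $\gamma$ large.

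\medskip

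The main obstacle is the normal-derivative piece of $[\mathcal L_\gamma,\chi\Lambda^{-1,\gamma}]u$. A direct expansion gives
\[
[A_1\partial_1,\chi\Lambda^{-1,\gamma}]u=A_1(\partial_1\chi)\Lambda^{-1,\gamma}u+\chi\,[A_1,\Lambda^{-1,\gamma}]\partial_1u,
\]
and the first term actually vanishes identically, since $\partial_1\chi$ is supported in $\{x_1\ge\delta_0\}$ whereas $\Lambda^{-1,\gamma}u$, being produced by a tangential operator, inherits the $x_1$-support $\{x_1<\delta_0\}$ of $u$. For the second term the characteristic block structure \eqref{strutturaablocchi}--\eqref{singularity} is essential: writing $A_1^{I,II}=x_1\widetilde A_1^{I,II}$, $A_1^{II,I}=x_1\widetilde A_1^{II,I}$, $A_1^{II,II}=x_1\widetilde A_1^{II,II}$ and using that $x_1\partial_1$ is a conormal vector field, the contributions of $\partial_1u^{II}$ become purely tangential after harmless commutations. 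The remaining $A_1^{I,I}\partial_1u^I$-part is treated by means of the interior equation $\mathcal L_\gamma u+\rho_\sharp u=F$ and the invertibility of $A_1^{I,I}$ on $\mathbb B^+$: one solves for $\partial_1u^I$ in terms of $F$, $u$ and tangential derivatives of $u$, then substitutes. After this substitution and an application of the symbolic calculus, $[\mathcal L_\gamma,\chi\Lambda^{-1,\gamma}]u$ is rewritten, modulo an order $-2$ tangential remainder acting on $u$, as a $\chi\Lambda^{-1,\gamma}F$-contribution plus a term of the form $\mathrm{Op}(\tau_0)\tilde u$ with $\tau_0\in\Gamma^0$, which is absorbed into $\tilde\rho_\sharp$.

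\medskip

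Once all commutator remainders have been either incorporated into $\tilde\rho_\sharp,\tilde b_\sharp,\tilde\ell_\sharp$ or shown to be of order $-2$ tangential applied to $u$ or $\psi$, the hypothesis $(H)_1$ (respectively $(H)_2$) applied to $(\tilde u,\tilde\psi)$ yields exactly \eqref{l2h1estimate} (respectively \eqref{l2estimate}), as soon as $\gamma\ge\breve\gamma_0$ is taken large enough to dominate the accumulated $\gamma^{-1}$-small error terms. The hardest step is unquestionably the normal-derivative commutator analyzed above, which is the only place where both the characteristic block structure of $A_1$ and the interior equation itself are substantively used.
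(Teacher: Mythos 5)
Your choice of the purely \emph{tangential} multiplier $\Lambda^{-1,\gamma}$ with symbol $\lambda^{-1,\gamma}(\xi')=(\gamma^2+|\xi'|^2)^{-1/2}$ is where the argument breaks down. The norms appearing in $(H)_1$, $(H)_2$ are the conormal norms $H^m_{tan,\gamma}(\mathbb R^n_+)$, which involve all of $Z_1=x_1\partial_1,Z_2,\dots,Z_n$; a Fourier multiplier acting only in $x'$ commutes with $x_1$ and $\partial_1$, hence gains no regularity whatsoever in the $Z_1$-direction. Consequently your claimed mapping $\Lambda^{-1,\gamma}\colon H^s_{tan,\gamma}\to H^{s+1}_{tan,\gamma}$ is false, and the bound $\|\chi\Lambda^{-1,\gamma}F\|_{H^2_{tan,\gamma}}\lesssim\|F\|_{H^1_{tan,\gamma}}$ fails already at the $Z_1^2$-term, whose leading piece $\|\chi\Lambda^{-1,\gamma}Z_1^2F\|_{L^2}$ cannot be controlled by $\|F\|_{H^1_{tan,\gamma}}$. (The one-sided inequality $\|u\|_{L^2}\lesssim\|\tilde u\|_{H^1_{tan,\gamma}}$ needed on the left-hand side does go through, by pseudo-locality in $x'$; it is the upper bound on the regularized source that collapses.) Note also that the very feature that makes your observation $A_1(\partial_1\chi)\Lambda^{-1,\gamma}u\equiv 0$ correct---a tangential multiplier preserves $x_1$-support---is exactly what prevents it from smoothing in $Z_1$: you cannot have both with an ordinary cut-off-after-the-operator construction.

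The paper resolves this dilemma by working with the conormal operator $\lambda^{-1,\gamma}(Z)={\rm Op}^\gamma_\sharp(\lambda^{-1,\gamma})$, built through the Nishitani--Takayama transform $\sharp$; under $\sharp$, each $Z_j$ becomes $\partial_j$ (up to a harmless constant for $j=1$), so $\lambda^{-1,\gamma}(Z)$ genuinely gains one derivative in \emph{every} conormal direction, which is what underlies \eqref{normalambda} and the norm equivalences you want. Since such an operator does not preserve $x_1$-support, the paper truncates the convolution kernel rather than the output, producing $\lambda^{m,\gamma}_\chi(Z)$ of Lemma \ref{supporto}, still of order $m$, still support-preserving, with a rapidly regularizing remainder $r_m(Z,\gamma)$ (Lemma \ref{lemmatecnico2}); the matching boundary operator $b'_{-1}(D',\gamma)$ with $(\lambda^{-1,\gamma}_\chi(Z)u)_{|x_1=0}=b'_{-1}(D',\gamma)(u_{|x_1=0})$ is supplied by Proposition \ref{bordo}. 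Your remaining architecture---splitting the interior commutator into tangential and normal parts, exploiting the block structure of $A_1$ and the interior equation to express $A_1^{I,I}\partial_1u^I$ through tangential quantities and $F$, and absorbing zeroth-order commutator pieces into the stable symbols $\rho_\sharp,b_\sharp,\ell_\sharp$---matches the paper's strategy (Sections \ref{commutatoretangenziale}--\ref{bdrysect}, Proposition \ref{normale}) and would go through once $\chi\Lambda^{-1,\gamma}$ is replaced by $\lambda^{-1,\gamma}_\chi(Z)$ and $\chi_0\Lambda^{-1,\gamma}$ by $b'_{-1}(D',\gamma)$.
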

The paper is organized as follows. In Section \ref{fs} we introduce the function spaces to be used in the following and the main related notations. In Section \ref{tt} we collect some technical tools, and the basic concerned results, that will be useful for the proof of Theorem \ref{mainthm}, given in Section \ref{l2}.
\newline
The Appendix \ref{appA} contains the proof of the most of the technical results used in Section \ref{l2}. The Appendix \ref{appB} is devoted to present some free boundary problems in MHD, that can be stated within the general framework developed in the paper.
\section{Function Spaces}\label{fs}
The purpose of this Section is to introduce the main function spaces to be used in the following and collect their basic properties.
For $\gamma\ge 1$ and $s\in\mathbb{R}$, we set
\begin{equation}\label{weightfcts}
\lambda^{s,\gamma}(\xi):=(\gamma^2+|\xi|^2)^{s/2}
\end{equation}
and, in particular, $\lambda^{s}:=\lambda^{s,1}$.
\newline
The Sobolev space of order $s\in\mathbb{R}$ in $\mathbb{R}^n$ is defined to be the set of all tempered distributions $u\in\mathcal{S}^{\prime}(\mathbb{R}^n)$ such that $\lambda^{s}\widehat{u}\in L^2(\mathbb{R}^n)$, being $\widehat{u}$ the Fourier transform of $u$. For $s\in\mathbb{N}$, the Sobolev space of order $s$ reduces to the set of all functions $u\in L^2(\mathbb{R}^n)$ such that $\partial^{\alpha}u\in L^2(\mathbb{R}^n)$, for all multi-indices $\alpha\in\mathbb{N}^n$ with $|\alpha|\le s$, where we have set
$$
\partial^{\alpha}:=\partial^{\alpha_1}_1\dots\partial^{\alpha_n}_n\,,\quad \alpha=(\alpha_1,\dots,\alpha_n)\,,
$$
and $|\alpha|:=\alpha_1+\dots+\alpha_n$, as it is usual.
\newline
Throughout the paper, for real $\gamma\ge 1$, $H^s_{\gamma}(\mathbb{R}^n)$ will denote the Sobolev space of order $s$, equipped with the $\gamma-$depending norm $||\cdot||_{s,\gamma}$ defined by
\begin{equation}\label{normagamma}
||u||^2_{s,\gamma}:=(2\pi)^{-n}\int_{\mathbb{R}^n}\lambda^{2s,\gamma}(\xi)|\widehat{u}(\xi)|^2d\xi\,,
\end{equation}
($\xi=(\xi_1,\dots,\xi_n)$ are the dual Fourier variables of $x=(x_1,\dots,x_n)$). The norms defined by \eqref{normagamma}, with different values of the parameter $\gamma$, are equivalent each other. For $\gamma=1$ we set for brevity $||\cdot||_{s}:=||\cdot||_{s,1}$ (and, accordingly, $H^s(\mathbb{R}^n):=H^s_{1}(\mathbb{R}^n)$).
\newline
It is clear that, for $s\in\mathbb{N}$, the norm in \eqref{normagamma} turns out to be equivalent, {\it uniformly with respect to} $\gamma$, to the norm $||\cdot||_{H^s_{\gamma}(\mathbb{R}^n)}$ defined by
\begin{equation}\label{derivate}
||u||^2_{H^s_{\gamma}(\mathbb{R}^n)}:=\sum\limits_{|\alpha|\le s}\gamma^{2(s-|\alpha|)}||\partial^{\alpha}u||^2_{L^2(\mathbb{R}^n)}\,.
\end{equation}
\noindent
Another useful remark about the parameter depending norms defined in \eqref{normagamma} is provided by the following counterpart of the usual Sobolev imbedding inequality
\begin{equation}\label{gammaimbedding}
||u||_{s,\gamma}\le\gamma^{s-r}||u||_{r,\gamma}\,,
\end{equation}
for arbitrary $s\le r$ and $\gamma\ge 1$.
\begin{remark}\label{sobolevbordo}
{\rm In Section \ref{l2}, the ordinary Sobolev spaces, endowed with the weighted norms above, will be considered in $\mathbb{R}^{n-1}$ (interpreted as the boundary of the half-space $\mathbb{R}^n_+$) and used to measure the smoothness of functions on the boundary; regardless of the different dimension, the same notations and conventions as before will be used there.}
\end{remark}
The appropriate functional setting where one measures the internal smoothness of solutions to characteristic problems is provided by the anisotropic Sobolev spaces introduced by Shuxing Chen \cite{chen} and Yanagisawa, Matsumura \cite{yanagisawa91}, see also \cite{secchi00} . Indeed these spaces take account of the loss of normal regularity with respect to the boundary that usually occurs for characteristic problems.
\newline
Let $\sigma\in C^{\infty}([0,+\infty[)$ be a monotone increasing function such that $\sigma(x_1)=x_1$ in a neighborhood of the origin and $\sigma(x_1)=1$ for any $x_1$ large enough.
\newline
For $j=1,2,\dots,n$, we set
$$
Z_1:=\sigma(x_1)\partial_1\,,\quad Z_j:=\partial_j\,,\,\,{\rm for}\,\,j\ge 2\,.
$$
Then, for every multi-index $\alpha=(\alpha_1,\dots,\alpha_n)\in\mathbb{N}^n$, the differential operator $Z^{\alpha}$ in the tangential direction (conormal derivative) of order $|\alpha|$ is defined by
$$
Z^{\alpha}:=Z_1^{\alpha_1}\dots Z^{\alpha_n}_n\,.
$$
\noindent
Given an integer $m\geq 1$ the {\it anisotropic Sobolev space} $H^m_{\ast}(\mathbb{R}^n_+)$ of order $m$ is defined as the set of functions $u\in L^2(\mathbb{R}^n_+)$ such that
$Z^\alpha\partial_1^k u\in  L^2(\mathbb{R}^n_+)$, for all multi-indices $\alpha\in\mathbb{N}^n$ and $k\in\mathbb{N}$ with $|\alpha|+2k\le m$, see \cite{mosetre09} and the references therein. Agreeing with the notations set for the usual Sobolev spaces, for $\gamma\ge 1$, $H^m_{\ast,\gamma}(\mathbb{R}^n_+)$ will denote the anisotropic space of order $m$ equipped with the $\gamma-$depending norm
\begin{equation}\label{normaanisotropa}
||u||^2_{H^m_{\ast,\gamma}(\mathbb{R}^n_+)}:=\sum\limits_{|\alpha|+2k\le m}\gamma^{2(m-|\alpha|-2k)}||Z^{\alpha}\partial^k_1 u||^2_{L^2(\mathbb{R}^n_+)}\,.
\end{equation}
Similarly, the {\it conormal Sobolev space} $H^m_{tan}(\mathbb{R}^n_+)$ of order $m$ is defined to be the set of functions $u\in L^2(\mathbb{R}^n_+)$ such that
$Z^\alpha u\in  L^2(\mathbb{R}^n_+)$, for all multi-indices $\alpha$ with $|\alpha|\le m$. For $\gamma\ge 1$, $H^m_{tan,\gamma}(\mathbb{R}^n_+)$ denotes the conormal space of order $m$ equipped with the $\gamma-$depending norm
\begin{equation}\label{normaconormale}
||u||^2_{H^{m}_{tan,\gamma}(\mathbb{R}^n_+)}:=\sum\limits_{|\alpha|\le m}\gamma^{2(m-|\alpha|)}||Z^{\alpha}u||^2_{L^2(\mathbb{R}^n_+)}\,.
\end{equation}
In the end, we remark that the following identity $H^1_{\ast\,,\gamma}(\mathbb{R}^n_+)=H^1_{tan\,,\gamma}(\mathbb{R}^n_+)$ holds true. However, for a Sobolev order $m>1$ the continuous imbedding $H^m_{\ast, \gamma}(\mathbb R^n_+)\subset H^m_{tan, \gamma}(\mathbb R^n_+)$ is fulfilled with the strict inclusion relation.
\newline
Since the functions we are dealing with, throughout the paper, vanish for large $x_1$ (as they are compactly supported on $\mathbb{B}^+$), without the loss of generality we assume the conormal derivative $Z_1$ to coincide with the differential operator $x_1\partial_1$ from now on \footnote{Notice however that, for functions arbitrarily supported on $\mathbb{R}^n_+$, the conormal derivative $Z_1$ equals the singular operator $x_1\partial_1$ only locally near the boundary $\{x_1=0\}$; indeed, $Z_1$ behaves like the usual normal derivative $\partial_1$ far from the boundary, according to the properties of the weight $\sigma=\sigma(x_1)$.}. This reduction will make easier to implement on conormal spaces the technical machinery that will be introduced in the next Section.
\section{Preliminaries and technical tools}\label{tt}
We start by recalling the definition of two operators $\sharp$ and $\natural$, introduced by Nishitani and Takayama in \cite{nishitani00}, with the main property of mapping isometrically square integrable (resp. essentially bounded) functions over the half-space $\mathbb{R}^n_+$ onto square integrable (resp. essentially bounded) functions over the full space $\mathbb{R}^n$.
\newline
The mappings
$\sharp:L^2(\mathbb{R}^{n}_+)\rightarrow L^2(\mathbb{R}^{n})$ and
$\natural:L^{\infty}(\mathbb{R}^{n}_+)\rightarrow L^{\infty}(\mathbb{R}^{n})$
are respectively defined by
\begin{equation}\label{diesisbquadro}
w^{\sharp}(x):=w(e^{x_1},x')e^{x_1/2},\quad a^{\natural}(x)=a(e^{x_1},x')\,,\quad\,\forall\,x=(x_1,x')\in\mathbb{R}^n\,.
\end{equation}
They are both norm preserving bijections.
\newline
It is also useful to notice that the above operators can be extended to the set $\mathcal{D}^{\prime}(\mathbb{R}^n_+)$ of Schwartz distributions in $\mathbb{R}^n_+$. It is easily seen that both $\sharp$ and $\natural$ are topological isomorphisms of the space $C^{\infty}_0(\mathbb{R}^n_+)$ of test functions in $\mathbb{R}^n_+$ (resp. $C^{\infty}(\mathbb{R}^n_+)$) onto the space $C^{\infty}_0(\mathbb{R}^n)$ of test functions in $\mathbb{R}^n$ (resp. $C^{\infty}(\mathbb{R}^n)$). Therefore, a standard duality argument leads to define $\sharp$ and $\natural$ on $\mathcal{D}^{\prime}(\mathbb{R}^n_+)$, by setting for every $\varphi\in C^{\infty}_0(\mathbb{R}^n)$
\begin{eqnarray}
\langle u^{\sharp},\varphi\rangle:=\langle u,\varphi^{\sharp^{-1}}\rangle\,,\label{diesis}\\
\langle u^{\natural},\varphi\rangle:=\langle u,\varphi^{\flat}\rangle\label{biquadro}
\end{eqnarray}
($\langle\cdot,\cdot\rangle$ is used to denote the duality pairing between distributions and test functions either in the half-space $\mathbb{R}^n_+$ or the full space $\mathbb{R}^n$). In the right-hand sides of \eqref{diesis}, \eqref{biquadro}, $\sharp^{-1}$ is just the inverse operator of $\sharp$, that is
\begin{equation}\label{sharp-1}
\varphi^{\sharp^{-1}}(x)=\frac1{\sqrt{x_1}}\varphi(\log x_1,x')\,,\quad\forall x_1>0,\,\,x'\in\mathbb{R}^{n-1}\,,
\end{equation}
while the operator $\flat$ is defined by
\begin{equation}\label{flat}
\varphi^{\flat}(x)=\frac1{x_1}\varphi(\log x_1,x')\,,\quad\forall x_1>0,\,\,x'\in\mathbb{R}^{n-1}\,,
\end{equation}
for functions $\varphi\in C^{\infty}_0(\mathbb{R}^n)$. The operators $\sharp^{-1}$ and $\flat$ arise by explicitly calculating the formal adjoints of $\sharp$ and $\natural$ respectively.
\newline
Of course, one has that $u^{\sharp}, u^{\natural}\in\mathcal{D}^{\prime}(\mathbb{R}^n)$; moreover the following relations can be easily verified (cf. \cite{nishitani00})
\begin{eqnarray}
(\psi u)^{\sharp}=\psi^{\natural}u^{\sharp}\,,\label{calcolo1}\\
\partial_j (u^{\natural})=(Z_j u)^{\natural},\quad j=1,\dots,n\,,\label{calcolo2}\\
\partial_1 (u^{\sharp})=(Z_1 u)^{\sharp}+\frac12 u^{\sharp}\,,\label{calcolo3}\\
\partial_j (u^{\sharp})=(Z_j u)^{\sharp}\,,\quad j=2,\dots,n\,,\label{calcolo4}
\end{eqnarray}
\noindent
whenever $u\in\mathcal{D}^{\prime}(\mathbb{R}^n_+)$ and $\psi\in C^{\infty}(\mathbb{R}^n_+)$ (in \eqref{calcolo1} $u\in L^2(\mathbb{R}^n_+)$ and $\psi\in L^{\infty}(\mathbb{R}^n_+)$ are even allowed).
\newline
From formulas \eqref{calcolo3}, \eqref{calcolo4} and the $L^2-$boundedness of $\sharp$, it also follows that
\begin{equation}\label{Hmsharp}
\sharp: H^m_{tan,\gamma}(\mathbb{R}^n_+)\rightarrow H^m_{\gamma}(\mathbb{R}^n)
\end{equation}
is a topological isomorphism, for each integer $m\ge 1$ and real $\gamma\ge 1$.
\newline
The previous remarks give a natural way to extend the definition of the conormal spaces on $\mathbb{R}^n_+$ to an arbitrary real order $s$. More precisely we give the following
\begin{definition}\label{spconormalireali}
For $s\in\mathbb{R}$ and $\gamma\ge 1$, the space $H^s_{tan,\gamma}(\mathbb{R}^n_+)$ is defined as
$$
H^s_{tan,\gamma}(\mathbb{R}^n_+):=\{u\in\mathcal{D}'(\mathbb{R}^n_+):\,\,u^\sharp\in H^s_\gamma(\mathbb{R}^n)\}
$$
and is provided with the norm
\begin{equation}\label{snorma}
||u||^2_{s,tan,\mathbb{R}^n_+,\gamma}:=||u^\sharp||^2_{s,\gamma}=(2\pi)^{-n}\int_{\mathbb{R}^n}\lambda^{2s,\gamma}(\xi)|\widehat{u^{\sharp}}(\xi)|^2\,d\xi\,.
\end{equation}
\end{definition}
It is obvious that, like for the real order usual Sobolev spaces, $H^{s}_{tan,\gamma}(\mathbb{R}^n_+)$ is a Banach space for every real $s$; furthermore, the above definition reduces to the one given in Section \ref{fs} when $s$ is a positive integer. Finally, for all $s\in\mathbb{R}$, the $\sharp$ operator becomes a topological isomorphism of $H^s_{tan,\gamma}(\mathbb{R}^n_+)$ onto $H^s_\gamma(\mathbb{R}^n)$.
\newline
In the end, we observe that the following
$$
\sharp:C^{\infty}_{(0)}(\mathbb{R}^n_+)\rightarrow\mathcal{S}(\mathbb{R}^n)\,,\quad \natural:C^{\infty}_{(0)}(\mathbb{R}^n_+)\rightarrow C^{\infty}_b(\mathbb{R}^n)
$$
are linear continuous operators, where $\mathcal{S}(\mathbb{R}^n)$ denotes the Schwartz space of rapidly decreasing functions in $\mathbb{R}^n$ and $C^{\infty}_b(\mathbb{R}^n)$ the space of infinitely smooth functions in $\mathbb{R}^n$, with bounded derivatives of all orders; notice also that the last maps are not onto. Finally, we remark that
\begin{equation}\label{sharp-1}
\sharp^{-1}:\mathcal{S}(\mathbb{R}^n)\rightarrow C^\infty(\mathbb{R}^n_+)
\end{equation}
is a bounded operator.
\subsection{A class of conormal operators}\label{conormalcalculus}
The $\sharp$ operator, defined at the beginning of Section \ref{tt}, can be used to allow pseudo-differential operators in $\mathbb{R}^n$ {\it acting conormally} on functions only defined over the positive half-space $\mathbb{R}^n_+$. Then the standard machinery of pseudo-differential calculus (in the parameter depending version introduced in \cite{agranovic71}, \cite{chazarain-piriou}) can be re-arranged into a functional calculus properly behaved on conormal Sobolev spaces described in Section \ref{fs}. In Section \ref{l2}, this calculus will be usefully applied to derive from the estimate \eqref{h1h2estimate} or \eqref{h1estimate} associated to the BVP \eqref{sbvp} the corresponding estimate \eqref{l2h1estimate} or \eqref{l2estimate} of Theorem \ref{mainthm}.
\newline
Let us introduce the pseudo-differential symbols, with a parameter, to be used later; here we closely follow the terminology and notations of \cite{coulombeltesi}.
\begin{definition}\label{symbols}
A parameter-depending pseudo-differential symbol of order $m\in\mathbb{R}$ is a real (or complex)-valued measurable function $a(x,\xi,\gamma)$ on $\mathbb{R}^n\times\mathbb{R}^n\times[1,+\infty[$, such that $a$ is $C^{\infty}$ with respect to $x$ and $\xi$ and for all multi-indices $\alpha,\beta\in\mathbb{N}^n$ there exists a positive constant $C_{\alpha,\beta}$ satisfying:
\begin{equation}\label{symbolestimates}
|\partial^{\alpha}_{\xi}\partial^{\beta}_xa(x,\xi,\gamma)|\le C_{\alpha,\beta}\lambda^{m-|\alpha|,\gamma}(\xi)\,,
\end{equation}
for all $x,\xi\in\mathbb{R}^n$ and $\gamma\ge 1$.
\end{definition}
\noindent
The same definition as above extends to functions $a(x,\xi,\gamma)$ taking values in the space $\mathbb{R}^{N\times N}$ (resp. $\mathbb{C}^{N\times N}$) of $N\times N$ real (resp. complex)-valued matrices, for all integers $N>1$ (where the module $|\cdot|$ is replaced in \eqref{symbolestimates} by any equivalent norm in $\mathbb{R}^{N\times N}$ (resp. $\mathbb{C}^{N\times N}$)). We denote by $\Gamma^m$ the set of $\gamma-$depending symbols of order $m\in\mathbb{R}$ (the same notation being used for both scalar or matrix-valued symbols). $\Gamma^m$ is equipped with the obvious norms
\begin{equation}\label{semi-norma}
|a|_{m,k}:=\max\limits_{|\alpha|+|\beta|\le k}\sup\limits_{(x,\xi)\in\mathbb{R}^n\times\mathbb{R}^n\,,\,\,\gamma\ge 1}\lambda^{-m+|\alpha|,\gamma}(\xi)|\partial^{\alpha}_{\xi}\partial^{\beta}_x a(x,\xi,\gamma)|\,,\quad\forall\,k\in\mathbb{N}\,,
\end{equation}
which turn it into a Fr\'echet space. For all $m,m'\in\mathbb{R}$, with $m\le m'$, the continuous imbedding $\Gamma^m\subset\Gamma^{m'}$ can be easily proven.
\newline
For all $m\in\mathbb{R}$, the function $\lambda^{m,\gamma}$ is of course a (scalar-valued) symbol in $\Gamma^m$.
\newline
Any symbol $a=a(x,\xi,\gamma)\in\Gamma^m$ defines a {\it pseudo-differential operator} ${\rm Op}^{\gamma}(a)=a(x,D,\gamma)$ on the Schwartz space $\mathcal{S}(\mathbb{R}^n)$, by the standard formula
\begin{equation}\label{gammapsdo}
\forall\,u\in\mathcal{S}(\mathbb{R}^n)\,,\forall\,x\in\mathbb{R}^n\,,\,\,\,\,\,{\rm Op}^{\gamma}(a)u(x)=a(x,D,\gamma)u(x):=(2\pi)^{-n}\int_{\mathbb{R}^n}e^{ix\cdot\xi}a(x,\xi,\gamma)\widehat{u}(\xi)d\xi\,,
\end{equation}
where, of course, we denote $x\cdot\xi:=\sum\limits_{j=1}^nx_j\xi_j$. $a$ is called the symbol of the operator \eqref{gammapsdo}, and $m$ is its order. It comes from the classical theory that ${\rm Op}^{\gamma}(a)$ defines a linear bounded operator
\begin{equation*}
{\rm Op}^{\gamma}(a):\mathcal{S}(\mathbb{R}^n)\rightarrow\mathcal{S}(\mathbb{R}^n)\,;
\end{equation*}
moreover, the latter extends to a linear bounded operator on the space $\mathcal{S}^{\prime}(\mathbb{R}^n)$ of tempered distributions in $\mathbb{R}^n$.
\newline
Let us observe that, for a symbol $a=a(\xi,\gamma)$ independent of $x$, the integral formula \eqref{gammapsdo} defining the operator ${\rm Op}^\gamma(a)$ simply becomes
\begin{equation}\label{coeffcost}
{\rm Op}^\gamma(a)u=\mathcal{F}^{-1}(a(\cdot,\gamma)\widehat{u})=\mathcal{F}^{-1}(a(\cdot,\gamma))\ast u\,,\quad u\in\mathcal{S}'(\mathbb{R}^n)\,,
\end{equation}
\noindent
where $\mathcal{F}^{-1}$ denotes hereafter the inverse Fourier transform and $\ast$ is the convolution operator.
\newline
An exhaustive account of the symbolic calculus for pseudo-differential operators with symbols in $\Gamma^m$ can be found in \cite{chazarain-piriou} (see also \cite{coulombeltesi}). Here, we just recall the following result, concerning the composition and the commutator of two pseudo-differential operators.
\begin{proposition}\label{prodottoecommutatore}
Let $a\in\Gamma^m$ and $b\in\Gamma^{l}$, for $l,m\in\mathbb{R}$. Then the composed operator ${\rm Op}^{\gamma}(a){\rm Op}^{\gamma}(b)$ is a pseudo-differential operator with symbol in $\Gamma^{m+l}$; moreover, if we let $a\# b$ denote the symbol of the composition, one has for every integer $N\ge 1$
\begin{equation}\label{espansione1}
a\# b-\sum\limits_{|\alpha|< N }\frac{(-i)^{|\alpha|}}{\alpha !}\partial^{\alpha}_{\xi}a\partial^{\alpha}_x b\in\Gamma^{m+l-N}\,.
\end{equation}
Under the same assumptions, the commutator $\lbrack{\rm Op}^{\gamma}(a),{\rm Op}^{\gamma}(b)\rbrack:={\rm Op}^{\gamma}(a){\rm Op}^{\gamma}(b)-{\rm Op}^{\gamma}(b){\rm Op}^{\gamma}(a)$ is again a pseudo-differential operator with symbol $c\in\Gamma^{m+l}$. If we further assume that one of the two symbols $a$ or $b$ is scalar-valued (so that $a$ and $b$ commute in the point-wise product), then the symbol $c$ of $\lbrack{\rm Op}^{\gamma}(a),{\rm Op}^{\gamma}(b)\rbrack$ has order $m+l-1$.
\end{proposition}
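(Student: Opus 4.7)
\medskip
\noindent\textbf{Plan of proof.} The strategy is to apply the hypothesis $(H)_1$ (respectively $(H)_2$) not to the given pair $(u,\psi)$ but to a regularized pair $(v,\varphi)$ obtained by acting on $u$ by a conormal pseudo-differential operator of order $-1$ and on $\psi$ by an ordinary tangential pseudo-differential operator of order $-1$. Concretely, I would fix an operator $\Psi^{-1}_\gamma$ associated to $u$ whose $\sharp$-conjugate is ${\rm Op}^\gamma(\lambda^{-1,\gamma}(\xi))$, so that $\Psi^{-1}_\gamma:H^s_{tan,\gamma}(\mathbb R^n_+)\to H^{s+1}_{tan,\gamma}(\mathbb R^n_+)$ is an isomorphism for every $s\in\mathbb R$, and the analogous operator $\Lambda^{-1,\gamma}$ on $\mathbb R^{n-1}$ for $\psi$. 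With $v:=\Psi^{-1}_\gamma u$ and $\varphi:=\Lambda^{-1,\gamma}\psi$, the norms appearing on the left-hand side of \eqref{h1h2estimate} (resp.\ \eqref{h1estimate}) applied to $(v,\varphi)$ coincide, up to constants, with those on the left-hand side of \eqref{l2h1estimate} (resp.\ \eqref{l2estimate}), because a gain/loss of one tangential derivative between $u$ and $v$ is built in; in particular the noncharacteristic trace of $v^I$ at $x_1=0$ will reproduce $\Lambda^{-1,\gamma}(u^I_{|x_1=0})$ up to harmless lower order commutators, which explains the shift from $H^{1/2}_\gamma$ to $H^{-1/2}_\gamma$ for the trace norm.

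\medskip
\noindent The main computation consists in rewriting $\mathcal L_\gamma v$ and $b_\gamma\varphi+\mathcal M_\gamma v$ in terms of $F$ and $g$. Setting $\tilde F:=\mathcal L_\gamma v+\rho_\sharp v$ and $\tilde g:=b_\gamma\varphi+\mathcal M_\gamma v+b_\sharp\varphi+\ell_\sharp v^{I,s}$, one writes
\begin{equation*}
\tilde F=\Psi^{-1}_\gamma F+[\mathcal L_\gamma,\Psi^{-1}_\gamma]u+R_1 u,\qquad \tilde g=\Lambda^{-1,\gamma}g+[b_\gamma,\Lambda^{-1,\gamma}]\psi+[\mathcal M_\gamma,\Psi^{-1}_\gamma]u+R_2(u,\psi),
\end{equation*}
where $R_1,R_2$ collect contributions coming from the perturbations $\rho_\sharp$, $b_\sharp$, $\ell_\sharp$. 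Since $\Psi^{-1}_\gamma$ raises one order of conormal regularity, $\|\Psi^{-1}_\gamma F\|_{H^2_{tan,\gamma}}\lesssim\|F\|_{H^1_{tan,\gamma}}$, and similarly $\|\Lambda^{-1,\gamma}g\|_{H^{3/2}_\gamma}\lesssim\|g\|_{H^{1/2}_\gamma}$, which produces the right-hand sides of \eqref{l2h1estimate}--\eqref{l2estimate} from those of \eqref{h1h2estimate}--\eqref{h1estimate}. By Proposition~\ref{prodottoecommutatore} (after a conjugation to the flat side via $\sharp$), the commutators $[b_\gamma,\Lambda^{-1,\gamma}]$ and the tangential part of $[\mathcal L_\gamma,\Psi^{-1}_\gamma]$ and $[\mathcal M_\gamma,\Psi^{-1}_\gamma]$ are pseudo-differential operators whose symbols belong to $\Gamma^{-1}$, so after composing with $\Psi^{-1}_\gamma$ or $\Lambda^{-1,\gamma}$ they can be recast as operators with symbols in $\Gamma^0$ acting on $u$ or $\psi$ directly. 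Thanks to the stability of $(H)_1$ and $(H)_2$ under the addition of arbitrary lower order operators $\rho_\sharp$, $b_\sharp$, $\ell_\sharp$, these commutator contributions are absorbed into the left-hand side provided $\gamma$ is chosen large enough (i.e.\ taking $\gamma\geq\breve\gamma_0$ with $\breve\gamma_0$ sufficiently large).

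\medskip
\noindent The main obstacle is the presence of the normal derivative $A_1\partial_1$ in $\mathcal L_\gamma$, which is not a conormal operator and therefore does not commute in the usual way with a tangential pseudo-differential operator of order $-1$. This is exactly where the characteristic structure \eqref{strutturaablocchi}--\eqref{singularity} of $A_1$ is used: for the characteristic block one has $A_1^{II,II}$, $A_1^{II,I}$, $A_1^{I,II}$ vanishing on $\{x_1=0\}$, hence they factor as $x_1$ times a smooth matrix and convert $\partial_1$ into the conormal field $Z_1=x_1\partial_1$ plus a bounded multiplier; on the noncharacteristic block $A_1^{I,I}$ the composition with $\Psi^{-1}_\gamma$ must be analyzed separately using that $\Psi^{-1}_\gamma$ is, in the $\sharp$-picture, a standard pseudo-differential operator, and that the restriction to $\{x_1=0\}$ of the resulting commutator pairs with $u^I$ only. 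Analogously, the support restriction (supports in $\mathbb B^+_{\delta_0}$, $\mathcal B(0;\delta_0)$ rather than $\mathbb B^+$, $\mathcal B(0;1)$) is needed because $v=\Psi^{-1}_\gamma u$ is no longer compactly supported; one multiplies $v$ by cutoff functions equal to $1$ on $\mathbb B^+_{\delta_0}$ and supported in $\mathbb B^+$, and shows that the cutoff errors are negligible for large $\gamma$, using the rapid decay of the kernel of $\Psi^{-1}_\gamma$ in the $\gamma$-dependent calculus. Case $2$ is obtained along the same lines, with the simplification that no perturbation $\rho_\sharp$ enters the interior equation, so the commutator $[\mathcal L_\gamma,\Psi^{-1}_\gamma]u$ must be estimated directly in $H^1_{tan,\gamma}$ using the characteristic structure above and absorbed for large~$\gamma$.
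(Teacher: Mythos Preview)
Your proposal does not address the stated Proposition at all. The statement you were asked to prove is Proposition~\ref{prodottoecommutatore}, a standard result of parameter-dependent pseudo-differential calculus asserting that the composition ${\rm Op}^\gamma(a){\rm Op}^\gamma(b)$ has symbol in $\Gamma^{m+l}$ with the usual asymptotic expansion, and that the commutator drops one order when one of the symbols is scalar. Instead, your entire write-up is an outline of the proof of Theorem~\ref{mainthm}: you discuss applying $(H)_1$ or $(H)_2$ to a regularized pair $(v,\varphi)$, commutators of $\mathcal L_\gamma$ with $\Psi_\gamma^{-1}$, the characteristic structure of $A_1$, cutoff errors, and so on. None of this bears on the statement about $a\# b$ and its asymptotic expansion.

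For the record, the paper does not prove Proposition~\ref{prodottoecommutatore} either: it is quoted as a classical fact with a reference to Chazarain--Piriou (and Coulombel's thesis). A correct ``proof'' here would simply cite those sources, or reproduce the standard oscillatory-integral argument for the composition formula in H\"ormander-type classes, checking that the estimates are uniform in the parameter $\gamma\ge 1$. Your text, while broadly consistent with the strategy the paper uses for Theorem~\ref{mainthm}, is misplaced here.
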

\noindent
We point out that when the symbol $b\in\Gamma^l$ of the preceding statement does not depend on the $x$ variables (i.e. $b=b(\xi,\gamma)$) then the symbol $a\# b$ of ${\rm Op}^{\gamma}(a){\rm Op}^{\gamma}(b)$ reduces to the point-wise product of symbols $a$ and $b$, that is the {\it asymptotic} formula \eqref{espansione1} is replaced by the {\it exact} formula
\begin{equation}\label{prodottoaritmetico}
(a\# b)(x,\xi,\gamma)=a(x,\xi,\gamma)b(\xi,\gamma)\,.
\end{equation}
\begin{remark}\label{operatoribordo}
{\rm In the next Section \ref{l2}, in order to handle the boundary condition \eqref{sbvp2}, the algebra of pseudo-differential operators presented above will be used in the framework of $\mathbb{R}^{n-1}_{x'}$, considered as the boundary of the half-space $\mathbb{R}^n_+$. According to \eqref{gammapsdo}, for a boundary symbol $a=a(x',\xi',\gamma)$, $x',\xi'\in\mathbb{R}^{n-1}$, the related pseudo-differential operator will be denoted by ${\rm Op}^\gamma(a)$ or $a(x',D',\gamma)$. In particular, we will write $\lambda^{m,\gamma}(D')$ to mean the boundary operator with symbol $\lambda^{m,\gamma}(\xi')$ defined by \eqref{weightfcts} with $\xi'$ instead of $\xi$.}
\end{remark}
Starting from the symbolic classes $\Gamma^m$, $m\in\mathbb{R}$, we introduce now the class of {\it conormal operators} in $\mathbb{R}^n_+$, to be used in the sequel.
\newline
Let $a(x,\xi,\gamma)$ be a $\gamma-$depending symbol in $\Gamma^m$, $m\in\mathbb{R}$. The {\it conormal operator with symbol $a$}, denoted by ${\rm Op}^{\gamma}_{\sharp}(a)$ (or equivalently $a(x,Z,\gamma)$) is defined by setting
\begin{equation}\label{operatoreconormale}
\forall\,u\in C^{\infty}_{(0)}(\mathbb{R}^n_+)\,,\quad\left({\rm Op}^{\gamma}_{\sharp}(a)u\right)^{\sharp}=\left({\rm Op}^{\gamma}(a)\right)(u^{\sharp})\,.
\end{equation}
In other words, the operator ${\rm Op}^{\gamma}_{\sharp}(a)$ is the composition of mappings
\begin{equation}\label{composizione}
{\rm Op}^{\gamma}_{\sharp}(a)=\sharp^{-1}\circ {\rm Op}^{\gamma}(a)\circ\sharp\,.
\end{equation}
As we already noted, $u^{\sharp}\in\mathcal{S}(\mathbb{R}^n)$ whenever $u\in C^{\infty}_{(0)}(\mathbb{R}^n_+)$; hence formula \eqref{operatoreconormale} makes sense and gives that ${\rm Op}^{\gamma}_{\sharp}(a)u$ is a $C^{\infty}-$function in $\mathbb{R}^n_+$ (see also \eqref{sharp-1}). Also ${\rm Op}^{\gamma}_{\sharp}(a):C^{\infty}_{(0)}(\mathbb{R}^n_+)\to C^{\infty}(\mathbb{R}^n_+)$ is a linear bounded operator that extends to a linear bounded operator from the space of distributions $u\in\mathcal{D}^{\prime}(\mathbb{R}^n_+)$ satisfying $u^{\sharp}\in\mathcal{S}^{\prime}(\mathbb{R}^n)$ into $\mathcal{D}^{\prime}(\mathbb{R}^n_+)$ itself\footnote{In principle, ${\rm Op}^{\gamma}_{\sharp}(a)$ could be defined by \eqref{operatoreconormale} over all functions $u\in C^{\infty}(\mathbb{R}^n_+)$, such that $u^{\sharp}\in\mathcal{S}(\mathbb{R}^n)$. Then ${\rm Op}^{\gamma}_{\sharp}(a)$ defines a linear bounded operator on the latter function space, provided that it is equipped with the topology induced, via $\sharp$, from the Fr\'echet topology of $\mathcal{S}(\mathbb{R}^n)$.}. Throughout the paper, we continue to denote this extension by ${\rm Op}^{\gamma}_{\sharp}(a)$ (or $a(x,Z,\gamma)$ equivalently).
\newline
As an immediate consequence of \eqref{composizione}, we have that for all symbols $a\in\Gamma^m$, $b\in\Gamma^l$, with $m,l\in\mathbb{R}$, there holds
\begin{equation}\label{prodottoconormale}
\forall\,u\in C^{\infty}_{(0)}(\mathbb{R}^n_+)\,,\quad {\rm Op}^{\gamma}_{\sharp}(a){\rm Op}^{\gamma}_{\sharp}(b)u=\left({\rm Op}^{\gamma}(a){\rm Op}^{\gamma}(b)(u^{\sharp})\right)^{\sharp^{-1}}\,.
\end{equation}
\newline
Then, it is clear that a functional calculus of conormal operators can be straightforwardly borrowed from the corresponding pseudo-differential calculus in $\mathbb{R}^n$; in particular we find that products and commutators of conormal operators are still operators of the same type, and their symbols are computed according to the rules collected in Proposition \ref{prodottoecommutatore}.
\newline
Below, let us consider the main examples of conormal operators that will be met in Section \ref{l2}.
\newline
As a first example, we quote the multiplication by a matrix-valued function $B\in C^{\infty}_{(0)}(\mathbb{R}^n_+)$. It is clear that this makes an operator of order zero according to \eqref{operatoreconormale}; indeed \eqref{calcolo1} gives for any vector-valued $u\in C^{\infty}_{(0)}(\mathbb{R}^n_+)$
\begin{equation}
(Bu)^{\sharp}(x)=B^{\natural}(x)u^{\sharp}(x)\,,
\end{equation}
and $B^{\natural}$ is a $C^{\infty}-$function in $\mathbb{R}^n$, with bounded derivatives of any order, hence a symbol in $\Gamma^0$.
\newline
We remark that, when computed for $B^{\natural}$, the norm of order $k\in\mathbb{N}$, defined on symbols by \eqref{semi-norma}, just reduces to
\begin{equation}\label{semi-norma_ordine0}
|B^{\natural}|_{0,k}=\max\limits_{|\alpha|\le k}||\partial^{\alpha}B^{\natural}||_{L^{\infty}(\mathbb{R}^n)}=\max\limits_{|\alpha|\le k}||Z^{\alpha}B||_{L^{\infty}(\mathbb{R}^n_+)}\,,
\end{equation}
where the second identity above exploits formulas \eqref{calcolo2} and that $\natural$ maps isometrically $L^{\infty}(\mathbb{R}^n_+)$ onto $L^{\infty}(\mathbb{R}^n)$.
\newline
Now, let $\mathcal{L}:=\gamma I_N+\sum\limits_{j=1}^nA_j(x)Z_j$ be a first-order linear partial differential operator, with matrix-valued coefficients $A_j\in C^{\infty}_{(0)}(\mathbb{R}^n_+)$ for $j=1,\dots,n$ and $\gamma\ge 1$. Since the leading part of $\mathcal{L}$ only involves conormal derivatives, applying \eqref{calcolo1}, \eqref{calcolo3}, \eqref{calcolo4} then gives
\begin{equation*}
\left(\gamma u+\sum\limits_{j=1}^nA_jZ_ju\right)^{\sharp}=\left(\gamma I-\frac12 A^{\natural}_1\right) u^{\sharp}+\sum\limits_{j=1}^nA_j^{\natural}\partial_j u^{\sharp}={\rm Op}^{\gamma}(a)u^{\sharp}\,,
\end{equation*}
where $a=a(x,\xi,\gamma):=\left(\gamma I_N-\frac12A_1^{\natural}(x)\right)+i\sum\limits_{j=1}^nA^{\natural}_j(x)\xi_j$ is a symbol in $\Gamma^1$. Then $\mathcal{L}$ is a conormal operator of order $1$, according to \eqref{operatoreconormale}.
\newline
\subsection{Sobolev continuity of conormal operators}\label{sobolevcontinuity}
We recall the following classical Sobolev continuity property for ordinary pseudo-differential operators on $\mathbb{R}^n$.
\begin{proposition}\label{continuitasobolev}
If $s,m\in\mathbb{R}$ then for all $a\in\Gamma^m$ the pseudo-differential operator ${\rm Op}^{\gamma}(a)$ extends as a linear bounded operator from $H^{s+m}_{\gamma}(\mathbb{R}^n)$ into $H^s_{\gamma}(\mathbb{R}^n)$, and the operator norm of such an extension is uniformly bounded with respect to $\gamma$.
\end{proposition}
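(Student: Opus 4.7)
The plan is to follow the classical road map for Sobolev continuity of pseudo-differential operators, taking care throughout that every constant is controlled by finitely many of the seminorms $|a|_{m,k}$ and is therefore uniform in $\gamma\ge 1$.

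First I would reduce to the case $m = s = 0$. Introduce the ``elliptic'' operators $\Lambda^{s,\gamma} := {\rm Op}^\gamma(\lambda^{s,\gamma})$; since $\lambda^{s,\gamma}(\xi)$ is independent of $x$, the identity \eqref{coeffcost} combined with the definition \eqref{normagamma} shows at once that $\Lambda^{s,\gamma}: H^{s+t}_\gamma(\mathbb{R}^n) \to H^t_\gamma(\mathbb{R}^n)$ is an isometry for every $t\in\mathbb{R}$, with inverse $\Lambda^{-s,\gamma}$ (by \eqref{prodottoaritmetico}). I then write
$$
{\rm Op}^\gamma(a) = \Lambda^{-s,\gamma}\circ B^\gamma \circ \Lambda^{s+m,\gamma}, \qquad B^\gamma := \Lambda^{s,\gamma}\circ {\rm Op}^\gamma(a)\circ \Lambda^{-(s+m),\gamma}.
$$
By Proposition \ref{prodottoecommutatore}, $B^\gamma$ is a pseudo-differential operator whose symbol lies in $\Gamma^{s+m-(s+m)} = \Gamma^0$, with seminorms controlled by finitely many seminorms of $a$ (and of the explicit scalar symbols $\lambda^{\pm,\gamma}$, whose $\Gamma$-seminorms are $\gamma$-uniform). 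The desired estimate will then follow from the assertion that any operator with symbol in $\Gamma^0$ is bounded on $L^2(\mathbb{R}^n)$, uniformly in $\gamma$.

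Second, for the $L^2$-boundedness in the case $a\in\Gamma^0$, I would absorb the $\gamma$-dependence by a phase-space dilation. Setting $U_\gamma f(x):=\gamma^{n/2}f(\gamma x)$, a unitary operator on $L^2(\mathbb{R}^n)$, a direct computation from \eqref{gammapsdo} gives
$$
U_\gamma^{-1}\circ {\rm Op}^\gamma(a)\circ U_\gamma = {\rm Op}(\tilde a), \qquad \tilde a(x,\eta) := a(x/\gamma,\,\gamma\eta,\,\gamma).
$$
Using \eqref{symbolestimates} with $m=0$ together with the equality $\lambda^{-|\alpha|,\gamma}(\gamma\eta) = \gamma^{-|\alpha|}(1+|\eta|^2)^{-|\alpha|/2}$, I find
$$
\bigl|\partial_\eta^\alpha\partial_x^\beta \tilde a(x,\eta)\bigr| = \gamma^{|\alpha|-|\beta|}\bigl|(\partial_\xi^\alpha\partial_y^\beta a)(x/\gamma,\gamma\eta,\gamma)\bigr| \le C_{\alpha,\beta}\,\gamma^{-|\beta|}(1+|\eta|^2)^{-|\alpha|/2}.
$$
Since $\gamma\ge 1$, this places $\tilde a$ in the standard (unparametrized) H\"ormander class $S^0_{1,0}$ with seminorms bounded by finitely many seminorms of $a$, \emph{independently of $\gamma$}. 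The classical Calder\'on--Vaillancourt theorem then gives $\|{\rm Op}(\tilde a)\|_{L^2\to L^2} \le C$ with such a $\gamma$-independent $C$, and unitarity of $U_\gamma$ transfers the same bound to ${\rm Op}^\gamma(a)$.

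Combining the two steps yields
$$
||{\rm Op}^\gamma(a)u||_{s,\gamma} = ||B^\gamma\,\Lambda^{s+m,\gamma}u||_{L^2} \le C\,||\Lambda^{s+m,\gamma}u||_{L^2} = C\,||u||_{s+m,\gamma},
$$
with $C$ depending only on finitely many seminorms of $a$. The main obstacle is precisely the $\gamma$-uniformity: a naive invocation of the usual Sobolev boundedness of pseudo-differential operators applied to $a(\cdot,\cdot,\gamma)$ at a fixed value of $\gamma$ delivers constants that may blow up as $\gamma\to\infty$. The dilation $U_\gamma$ is the key device, exploiting the scaling built into the weight $\lambda^{m,\gamma}(\xi) = (\gamma^2+|\xi|^2)^{m/2}$ to convert the parameter dependence into a harmless rescaling of phase space and thereby reducing matters to a classical estimate.
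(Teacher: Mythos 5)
The paper does not prove this Proposition; it merely refers the reader to Chazarain--Piriou for a detailed proof and remarks that the operator norm depends only on a norm of type \eqref{semi-norma}. So there is no ``paper's own proof'' to compare against, and I will instead evaluate your argument on its merits.

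Your proof is correct: the reduction to $m=s=0$ by conjugation with the exact multipliers $\Lambda^{s,\gamma}$ is sound (the exact product formula \eqref{prodottoaritmetico} makes $\Lambda^{s,\gamma}$ an isometry $H^{s+t}_\gamma\to H^t_\gamma$ and guarantees invertibility), and by Proposition~\ref{prodottoecommutatore} the conjugated operator $B^\gamma$ has symbol in $\Gamma^0$ with seminorms controlled by finitely many seminorms of $a$, uniformly in $\gamma$. The dilation computation $U_\gamma^{-1}\,{\rm Op}^\gamma(a)\,U_\gamma={\rm Op}(\tilde a)$ with $\tilde a(x,\eta)=a(x/\gamma,\gamma\eta,\gamma)$ is also correct and does place $\tilde a$ in $S^0_{1,0}$ uniformly.

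However, the dilation step is superfluous, and the stated motivation for it (``a naive invocation of the usual Sobolev boundedness... delivers constants that may blow up'') is not accurate for $m=0$. Indeed, once you have reduced to a symbol $b\in\Gamma^0$, the defining estimate \eqref{symbolestimates} reads $|\partial^\alpha_\xi\partial^\beta_x b(x,\xi,\gamma)|\le C_{\alpha,\beta}\,(\gamma^2+|\xi|^2)^{-|\alpha|/2}\le C_{\alpha,\beta}\,(1+|\xi|^2)^{-|\alpha|/2}$ for all $\gamma\ge 1$, because $\gamma^2+|\xi|^2\ge 1+|\xi|^2$. Thus $b(\cdot,\cdot,\gamma)$ already lies in $S^0_{1,0}$ with $S^0_{1,0}$-seminorms bounded by the $\Gamma^0$-seminorms of $b$, uniformly in $\gamma$, and Calder\'on--Vaillancourt gives the uniform $L^2$ bound directly, without any phase-space rescaling. (The $\gamma$-dependence only threatens uniformity for $m\ne 0$, and that is exactly what the conjugation by $\Lambda^{s,\gamma}$, $\Lambda^{-(s+m),\gamma}$ already removes.) The dilation argument, while valid, is thus a detour; the shortest correct route is: conjugate to reduce to $\Gamma^0$, observe $\Gamma^0\subset S^0_{1,0}$ uniformly in $\gamma\ge 1$, apply Calder\'on--Vaillancourt.
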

\noindent
We refer the reader to \cite{chazarain-piriou} for a detailed proof of Proposition \ref{continuitasobolev}. A thorough analysis shows that the norm of ${\rm Op}^{\gamma}(a)$, as a linear bounded operator from $H^{s+m}_{\gamma}(\mathbb{R}^n)$ to $H^s_{\gamma}(\mathbb{R}^n)$, actually depends only on a norm of type \eqref{semi-norma} of the symbol $a$, besides the Sobolev order $s$ and the symbolic order $m$ (cf. \cite{chazarain-piriou} for detailed calculations).
From the Sobolev continuity of pseudo-differential operators quoted above, and using that the operator $\sharp$ maps isomorphically conormal Sobolev spaces in $\mathbb{R}^n_+$ onto ordinary Sobolev spaces in $\mathbb{R}^n$ (see \eqref{Hmsharp} and Definition \ref{spconormalireali}), we easily derive the following result.
\begin{proposition}\label{continuitaconormale}
If $s,m\in\mathbb{R}$ and $a\in\Gamma^m$, then the conormal operator ${\rm Op}_{\sharp}^{\gamma}(a)$ extends to a linear bounded operator from $H^{s+m}_{tan,\gamma}(\mathbb{R}^n_+)$ to $H^{s}_{tan,\gamma}(\mathbb{R}^n_+)$; moreover the operator norm of such an extension is uniformly bounded with respect to $\gamma$.
\end{proposition}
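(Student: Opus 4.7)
The plan is to read the claim off directly from the factorization \eqref{composizione}, namely ${\rm Op}^{\gamma}_{\sharp}(a)=\sharp^{-1}\circ {\rm Op}^{\gamma}(a)\circ\sharp$, combined with the two ingredients already available: Proposition \ref{continuitasobolev} on the one hand, and the fact that $\sharp$ is by construction an isometric isomorphism between conormal and ordinary Sobolev spaces on the other. The whole argument is essentially the chaining of three bounded maps, and the cleanest way to set it up is to \emph{define} the extension of ${\rm Op}^{\gamma}_{\sharp}(a)$ on $H^{s+m}_{tan,\gamma}(\mathbb R^n_+)$ to be the composition $\sharp^{-1}\circ {\rm Op}^{\gamma}(a)\circ\sharp$ acting on the whole of this space.

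More concretely, I would first record that by Definition \ref{spconormalireali} the map $\sharp$ provides, for every real $\sigma$ and every $\gamma\ge 1$, a bijective isometry of $H^{\sigma}_{tan,\gamma}(\mathbb R^n_+)$ onto $H^{\sigma}_{\gamma}(\mathbb R^n)$, simply because the norm on the conormal side is \emph{defined} as $\|u^\sharp\|_{\sigma,\gamma}$; the same holds for $\sharp^{-1}$. Then I would invoke Proposition \ref{continuitasobolev} to obtain a constant $C>0$, depending only on $s$, $m$ and a single semi-norm $|a|_{m,k}$, such that
\[
\|{\rm Op}^{\gamma}(a)v\|_{s,\gamma}\le C\,\|v\|_{s+m,\gamma}\qquad \text{for every } v\in H^{s+m}_{\gamma}(\mathbb R^n) \text{ and every } \gamma\ge 1.
\]
Chaining the three arrows
\[
H^{s+m}_{tan,\gamma}(\mathbb R^n_+)\xrightarrow{\;\sharp\;}H^{s+m}_{\gamma}(\mathbb R^n)\xrightarrow{\;{\rm Op}^{\gamma}(a)\;}H^{s}_{\gamma}(\mathbb R^n)\xrightarrow{\;\sharp^{-1}\;}H^{s}_{tan,\gamma}(\mathbb R^n_+)
\]
and using the isometric identities on both ends at once produces
\[
\|{\rm Op}^{\gamma}_{\sharp}(a)u\|_{s,tan,\mathbb R^n_+,\gamma}=\|{\rm Op}^{\gamma}(a)(u^\sharp)\|_{s,\gamma}\le C\,\|u^\sharp\|_{s+m,\gamma}=C\,\|u\|_{s+m,tan,\mathbb R^n_+,\gamma},
\]
with $C$ independent of $\gamma$, which is the claimed estimate.

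I do not foresee any genuine obstacle: once Definition \ref{spconormalireali} and Proposition \ref{continuitasobolev} are in hand, the statement is almost a tautology on the $\sharp$-side. The only point that deserves explicit mention is consistency with the original formula \eqref{operatoreconormale} on $C^{\infty}_{(0)}(\mathbb R^n_+)$, but this is built into \eqref{composizione}, which already expresses ${\rm Op}^{\gamma}_{\sharp}(a)$ as exactly the three-fold composition used above; hence the extension defined via this composition agrees with \eqref{operatoreconormale} on smooth compactly supported functions. The uniformity in $\gamma$ and the dependence of the operator norm on a single semi-norm of $a$ are both inherited, without any extra work, from the analogous statements already incorporated into Proposition \ref{continuitasobolev}.
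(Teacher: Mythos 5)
Your argument is correct and is precisely the route the paper intends: conjugating ${\rm Op}^\gamma(a)$ by the isometry $\sharp$ (cf.\ \eqref{composizione} and Definition \ref{spconormalireali}) and invoking Proposition \ref{continuitasobolev}, so that both boundedness and $\gamma$-uniformity are inherited directly. Nothing further is needed.
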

\noindent
In order to perform the subsequent analysis, our interest will be mainly focused on the conormal operators of the type
\begin{equation}\label{conormaloprt}
\lambda^{m,\gamma}(Z):={\rm Op}^\gamma_{\sharp}(\lambda^{m,\gamma})\,,\quad m\in\mathbb{R}\,.
\end{equation}
Firstly, it is worth to remark that for each real $m$, the conormal operator $\lambda^{m,\gamma}(Z)$ is invertible, its two-sided inverse being provided by the operator  $\lambda^{-m,\gamma}(Z)$. Hence, applying Proposition \ref{continuitaconormale} to the operators $\lambda^{m,\gamma}(Z)$, $\lambda^{-m,\gamma}(Z)$ gives that the following
$$
\lambda^{m,\gamma}(Z):H^m_{tan,\gamma}(\mathbb{R}^n_+)\rightarrow L^2(\mathbb{R}^n_+)\,,\quad \lambda^{-m,\gamma}(Z): L^2(\mathbb{R}^n_+)\rightarrow H^m_{tan,\gamma}(\mathbb{R}^n_+)\,,
$$
are linear bounded operators. Notice also that, from Plancherel's identity, the norm \eqref{snorma} (with $m$ instead of $s$) on $H^m_{tan}(\mathbb{R}^n_+)$ can be restated in terms of the operator $\lambda^{m,\gamma}(Z)$ as
\begin{equation}\label{normalambda}
||u||_{m,tan,\mathbb{R}^n_+,\gamma}=||\lambda^{m,\gamma}(Z)u||_{L^2(\mathbb{R}^n_+)}\,.
\end{equation}
The relation \eqref{normalambda} will play an essential role in the proof of estimate \eqref{l2estimate}.
\section{Proof of Theorem \ref{mainthm}}\label{l2}
This Section is entirely devoted to the proof of Theorem \ref{mainthm}.
\subsection{The strategy of the proof.}\label{gs}
We closely follow the techniques developed in \cite{moseBVP} (see also \cite{moseIBVP}). In principle, for given smooth functions $u, \psi$ under the assumptions of Theorem \ref{mainthm}, we consider the problem analogous to \eqref{sbvp} solved by the functions $\lambda^{-1,\gamma}(Z)u$ and $\lambda^{-1,\gamma}(D')\psi$; \footnote{Actually, instead of $(\lambda^{-1,\gamma}(Z)u, \lambda^{-1,\gamma}(D')\psi)$ we will consider similar functions obtained by applying to $(u,\psi)$ a suitable modified version of the operators $\lambda^{-1,\gamma}(Z)$, $\lambda^{-1,\gamma}(D')$, that will be rigorously defined in Section \ref{mfm}. These new operators will be constructed in such a way to differ from $\lambda^{-1,\gamma}(Z)$, $\lambda^{-1,\gamma}(D')$ by suitable regularizing lower order reminders.} this problem is obtained by acting on the original BVP solved by $(u,\psi)$ by the operators $\lambda^{-1,\gamma}(Z)$, $\lambda^{-1,\gamma}(D')$ and making use of the rules of the symbolic calculus collected in Section \ref{conormalcalculus}. In the resulting equations, new terms appear, including the commutator between the differential operator $\mathcal L_\gamma$ and the conormal operator $\lambda^{-1,\gamma}(Z)$, in the interior equation, and similar commutators arising from the interaction of $\lambda^{-1,\gamma}(D')$ with the operators in the boundary condition. We apply the assumption $(H)_1$ (or $(H_2)$) to the problem for $(\lambda^{-1,\gamma}(Z)u, \lambda^{-1,\gamma}(D')\psi)$, writing for it the estimate \eqref{h1h2estimate} (or \eqref{h1estimate}). The structure of the estimates \eqref{h1h2estimate}, \eqref{h1estimate} allows to treat the commutator terms involved in the equations either as a part of the source terms or as lower order operators. The desired estimates \eqref{l2h1estimate}, \eqref{l2estimate} come respectively from \eqref{h1h2estimate}, \eqref{h1estimate} for $(\lambda^{-1,\gamma}(Z)u, \lambda^{-1,\gamma}(D')\psi)$, in view of the equivalence of norms \eqref{normalambda}, \eqref{normaconormale} in $H^m_{tan}(\mathbb{R}^n_+)$ and the similar equivalence of norms \eqref{normagamma}, \eqref{derivate} for ordinary Sobolev spaces on the boundary.
\subsection{A modified version of the conormal operator $\lambda^{-1,\gamma}(Z)$}\label{mfm}
As explained before, we are going to act on the equation \eqref{sbvp1}, written for a given smooth function $u$, by the conormal operator $\lambda^{-1,\gamma}(Z)$. To make possible the interaction between $\lambda^{-1,\gamma}(Z)$ and the term of $\mathcal L_\gamma$ involving the normal derivative $\partial_1$, we need to slightly modify the conormal operator $\lambda^{-1,\gamma}(Z)$. Here, we follow the ideas of \cite{moseBVP}.

\medskip
To be definite, let us illustrate the strategy for the operator $\lambda^{m,\gamma}(Z)$ with general order $m\in\mathbb{R}$.
The first step is to decompose the symbol $\lambda^{m,\gamma}$ as the sum of two contributions. To do so, we take an arbitrary positive, even function $\chi\in C^{\infty}(\mathbb{R}^n)$ with the following properties
\begin{equation}\label{chi}
0\le\chi(x)\le 1\,,\quad\forall\,x\in\mathbb{R}^n\,,\quad \chi(x)\equiv 1\,,\,\,{\rm for}\,\,|x|\le\frac{\varepsilon_0}{2}\,,\quad \chi(x)\equiv 0\,,\,\,{\rm for}\,\,|x|>\varepsilon_0\,,
\end{equation}
with a suitable $0<\varepsilon_0<1$ that will be specified later on, see Lemma \ref{supporto}.
Then, we set:
\begin{equation}\label{funzionepesomodificata}
\begin{array}{ll}
\lambda^{m,\gamma}_{\chi}(\xi):=\chi(D)(\lambda^{m,\gamma})(\xi)=(\mathcal{F}^{-1}\chi\ast\lambda^{m,\gamma})(\xi)\,,\\
\\
r_{m}(\xi,\gamma):=\lambda^{m,\gamma}(\xi)-\lambda^{m,\gamma}_{\chi}(\xi)=(I-\chi(D))(\lambda^{m,\gamma})(\xi)\,.
\end{array}
\end{equation}
The following result (see \cite[Lemma 4.1]{moseBVP}) shows that the function $\lambda^{m,\gamma}_{\chi}$ behaves, as a symbol, like $\lambda^{m,\gamma}$.
\begin{lemma}\label{lemmatecnico1}
Let the function $\chi\in C^{\infty}(\mathbb{R}^n)$ satisfy the assumptions in \eqref{chi}. Then $\lambda^{m,\gamma}_{\chi}$ is a symbol in $\Gamma^m$, i.e. for all $\alpha\in\mathbb{N}^n$ there exists a constant $C_{m,\alpha}>0$ such that:
\begin{equation}\label{lemmatecnicoeq1.0}
|\partial^{\alpha}_{\xi}\lambda^{m,\gamma}_{\chi}(\xi)|\le C_{m,\alpha}\lambda^{m-|\alpha|,\gamma}(\xi)\,,\quad\forall\,\xi\in\mathbb{R}^n\,.
\end{equation}
\end{lemma}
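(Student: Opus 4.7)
The plan is to bound $\partial^\alpha_\xi \lambda^{m,\gamma}_\chi$ pointwise by reducing the convolution $\mathcal{F}^{-1}\chi \ast \lambda^{m,\gamma}$ to a pointwise product of $\lambda^{m-|\alpha|,\gamma}(\xi)$ times a finite integral of $\mathcal{F}^{-1}\chi$ against a polynomial weight. The key facts are: (i) since $\chi \in C^\infty_0(\mathbb{R}^n)$, its inverse Fourier transform $\mathcal{F}^{-1}\chi$ belongs to $\mathcal{S}(\mathbb{R}^n)$; (ii) direct computation shows that $\lambda^{m,\gamma}(\xi) = (\gamma^2+|\xi|^2)^{m/2}$ satisfies the symbolic estimate $|\partial^\alpha \lambda^{m,\gamma}(\xi)| \le C'_{m,\alpha}\lambda^{m-|\alpha|,\gamma}(\xi)$ for all $\alpha \in \mathbb{N}^n$, $\xi \in \mathbb{R}^n$, $\gamma \ge 1$; (iii) a weighted Peetre-type inequality holds uniformly in $\gamma \ge 1$.

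First, I would differentiate the convolution under the integral sign, which is justified because $\mathcal{F}^{-1}\chi$ is Schwartz and $\partial^\alpha \lambda^{m,\gamma}$ has at most polynomial growth, giving
\begin{equation*}
\partial^\alpha_\xi \lambda^{m,\gamma}_\chi(\xi) = \int_{\mathbb{R}^n} \mathcal{F}^{-1}\chi(\eta)\,\partial^\alpha \lambda^{m,\gamma}(\xi-\eta)\,d\eta.
\end{equation*}
Applying (ii) with $\xi$ replaced by $\xi-\eta$ yields
\begin{equation*}
\bigl|\partial^\alpha_\xi \lambda^{m,\gamma}_\chi(\xi)\bigr| \le C'_{m,\alpha} \int_{\mathbb{R}^n} |\mathcal{F}^{-1}\chi(\eta)|\,\lambda^{m-|\alpha|,\gamma}(\xi-\eta)\,d\eta.
\end{equation*}

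Next I would establish the weighted Peetre inequality: for every $s \in \mathbb{R}$ and $\gamma \ge 1$,
\begin{equation*}
\lambda^{s,\gamma}(\xi-\eta) \le 2^{|s|/2}\,\lambda^{s,\gamma}(\xi)\,(1+|\eta|^2)^{|s|/2},\qquad \xi,\eta \in \mathbb{R}^n.
\end{equation*}
For $s \ge 0$ this follows from the elementary bound $\gamma^2+|\xi-\eta|^2 \le 2(\gamma^2+|\xi|^2)(1+|\eta|^2)$, valid since $\gamma \ge 1$; the case $s < 0$ is obtained by exchanging $\xi$ and $\xi-\eta$ and inverting.

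Inserting the Peetre inequality (with $s = m-|\alpha|$) into the previous estimate gives
\begin{equation*}
\bigl|\partial^\alpha_\xi \lambda^{m,\gamma}_\chi(\xi)\bigr| \le C'_{m,\alpha}\,2^{|m-|\alpha||/2}\,\lambda^{m-|\alpha|,\gamma}(\xi) \int_{\mathbb{R}^n} |\mathcal{F}^{-1}\chi(\eta)|\,(1+|\eta|^2)^{|m-|\alpha||/2}\,d\eta,
\end{equation*}
and the remaining integral is finite because $\mathcal{F}^{-1}\chi \in \mathcal{S}(\mathbb{R}^n)$. Setting $C_{m,\alpha}$ equal to the product of the three constants yields \eqref{lemmatecnicoeq1.0}.

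There is no real obstacle here: the argument is a standard symbolic-calculus bookkeeping. The one point requiring a little care is ensuring that the Peetre inequality is uniform in the parameter $\gamma \ge 1$, which is why the explicit computation with $\gamma^2+|\xi-\eta|^2 \le 2(\gamma^2+|\xi|^2)(1+|\eta|^2)$ is done by hand rather than quoted from the unparametrized version.
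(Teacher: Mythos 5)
Your proof is correct and follows essentially the same route that the paper itself uses for the companion estimates in Appendix \ref{appA} (e.g.\ in the proofs of Lemma \ref{lemmatecnico5} and Lemma \ref{stimesimbolonormale}), namely differentiation of the convolution under the integral sign, the symbol bound $|\partial^\alpha\lambda^{m,\gamma}|\le C\,\lambda^{m-|\alpha|,\gamma}$, and the $\gamma$-uniform Peetre inequality, which is precisely the paper's \eqref{gammapeetre} up to a harmless choice of constant. The only point worth flagging is cosmetic: your Peetre inequality $\lambda^{s,\gamma}(\xi-\eta)\le 2^{|s|/2}\lambda^{s,\gamma}(\xi)(1+|\eta|^2)^{|s|/2}$ is the same statement as \eqref{gammapeetre} after the change of variables $\xi\mapsto\xi-\eta$, $\eta\mapsto-\eta$ (you even get the sharper constant $2^{|s|/2}$ instead of $2^{|s|}$), so you could simply cite that display rather than rederive it.
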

\noindent
An immediate consequence of Lemma \ref{lemmatecnico1} and \eqref{funzionepesomodificata} is that $r_{m}$ is also a $\gamma-$depending symbol in $\Gamma^{m}$.
\newline
Let us define, with the obvious meaning of the notations:
\begin{equation}
\begin{array}{ll}
\lambda^{m,\gamma}_{\chi}(D):={\rm Op}^{\gamma}(\lambda^{m,\gamma}_{\chi})\,,\quad r_{m}(D,\gamma):={\rm Op}^{\gamma}(r_{m})\,,\\
\\
\lambda^{m,\gamma}_{\chi}(Z):={\rm Op}^{\gamma}_{\sharp}(\lambda^{m,\gamma}_{\chi})\,,\quad r_{m}(Z,\gamma):={\rm Op}^{\gamma}_{\sharp}(r_{m})\,.
\end{array}
\end{equation}
A useful property of the modified operator $\lambda^{m,\gamma}_{\chi}(Z)$ is that it preserves the compact support of functions, as shown by the following
\begin{lemma}\label{supporto}
Let $0<\delta_0<1$ be fixed. There exists $\varepsilon_0=\varepsilon_0(\delta_0)>0$ such that, if $\chi\in C^\infty_0(\mathbb{R}^n)$ satisfies the assumption \eqref{chi} with the previous choice of $\varepsilon_0$, then for all $u\in C^{\infty}_{(0)}(\mathbb{R}^n_+)$, with ${\rm supp} u\subseteq\mathbb{B}^+_{\delta_0}$, we have
$$
{\rm supp}\lambda^{m,\gamma}_{\chi}(Z)u\subseteq\mathbb{B}^+\,.
$$
\end{lemma}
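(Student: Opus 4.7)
The plan is to exploit that $\lambda^{m,\gamma}_\chi(D)$, being a Fourier multiplier independent of $x$, is a pure convolution operator whose Schwartz kernel inherits compact support from $\chi$, and then to track how supports propagate through the two-sided conjugation $\lambda^{m,\gamma}_\chi(Z)=\sharp^{-1}\circ\lambda^{m,\gamma}_\chi(D)\circ\sharp$. Choosing $\varepsilon_0$ small enough (depending on $\delta_0$) will confine the final support inside $\mathbb{B}^+$.

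First I would compute the convolution kernel $k:=\mathcal F^{-1}(\lambda^{m,\gamma}_\chi)$. From \eqref{funzionepesomodificata}, $\lambda^{m,\gamma}_\chi=\mathcal F^{-1}\chi\ast_\xi\lambda^{m,\gamma}$, so applying the $\mathcal S\ast\mathcal S'$ Fourier convolution theorem together with $\mathcal F^{-1}(\mathcal F^{-1}\chi)(x)=(2\pi)^{-n}\chi(-x)$ and the assumed evenness of $\chi$ yields the identity
$$k(x)=\chi(x)\,\mathcal F^{-1}(\lambda^{m,\gamma})(x)$$
as tempered distributions. Since $\chi\in C^\infty_0(\mathbb R^n)$ is smooth and compactly supported and $\mathcal F^{-1}(\lambda^{m,\gamma})\in\mathcal S'(\mathbb R^n)$, the product is a well-defined compactly supported tempered distribution with ${\rm supp}\,k\subseteq{\rm supp}\,\chi\subseteq\overline{B(0,\varepsilon_0)}$, and $\lambda^{m,\gamma}_\chi(D)f=k\ast f$ for every $f\in\mathcal S(\mathbb R^n)$.

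Next I would translate the hypothesis on ${\rm supp}\,u$. For $u$ supported in $\mathbb B^+_{\delta_0}$, the function $u^\sharp(x_1,x')=e^{x_1/2}u(e^{x_1},x')$ vanishes whenever $e^{x_1}\ge\delta_0$ or $|x'|\ge\delta_0$, hence
$$
{\rm supp}\,u^\sharp\subseteq\{(x_1,x')\in\mathbb R^n\,:\,x_1\le\log\delta_0,\ |x'|\le\delta_0\}.
$$
The convolution rule ${\rm supp}(k\ast u^\sharp)\subseteq{\rm supp}\,k+{\rm supp}\,u^\sharp$ then gives
$$
{\rm supp}\bigl(\lambda^{m,\gamma}_\chi(D)u^\sharp\bigr)\subseteq\{x_1\le\log\delta_0+\varepsilon_0,\ |x'|\le\delta_0+\varepsilon_0\}.
$$
Pulling back via $\sharp^{-1}v(x_1,x')=v(\log x_1,x')/\sqrt{x_1}$ converts the bound $x_1\le\log\delta_0+\varepsilon_0$ into $x_1\le\delta_0 e^{\varepsilon_0}$, so
$$
{\rm supp}\bigl(\lambda^{m,\gamma}_\chi(Z)u\bigr)\subseteq\{(x_1,x')\in\mathbb R^n_+\,:\,0<x_1\le\delta_0 e^{\varepsilon_0},\ |x'|\le\delta_0+\varepsilon_0\}.
$$
It then suffices to select $\varepsilon_0=\varepsilon_0(\delta_0)>0$ so that $\delta_0 e^{\varepsilon_0}<1$ and $\delta_0+\varepsilon_0<1$; since $\delta_0<1$, both hold for instance with $\varepsilon_0:=\tfrac12\min\{-\log\delta_0,\,1-\delta_0\}$, and the right-hand side is contained in $\mathbb B^+$.

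The only non-bookkeeping point is the rigorous derivation of the kernel identity $k=\chi\cdot\mathcal F^{-1}(\lambda^{m,\gamma})$: because $\lambda^{m,\gamma}$ is merely a smooth function of polynomial growth, $\mathcal F^{-1}(\lambda^{m,\gamma})$ is only a tempered distribution, so one must interpret the product as multiplication of a distribution by a smooth compactly supported function. This is a standard operation, and everything else reduces to bookkeeping of supports — the one genuinely $\delta_0$-sensitive step being the exponential change of variable $x_1\leftrightarrow e^{x_1}$, which is what dictates the precise smallness condition on $\varepsilon_0$.
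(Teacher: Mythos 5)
Your proof is correct and takes essentially the same approach as the paper: both observe that $\mathcal F^{-1}(\lambda^{m,\gamma}_\chi)=\chi\cdot\mathcal F^{-1}(\lambda^{m,\gamma})$ is supported in $\overline{B(0,\varepsilon_0)}$, push the supports through the $\sharp$/$\sharp^{-1}$ conjugation (the $x_1\mapsto e^{x_1}$ change of variable being the $\delta_0$-sensitive step), and arrive at the same smallness condition $\varepsilon_0\le\min\{\log(1/\delta_0),\,1-\delta_0\}$. The paper phrases the support analysis as the vanishing of the test function inside a distributional pairing rather than invoking the $\mathcal E'\ast\mathcal S$ convolution-support rule directly, but the content is identical.
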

\begin{remark}
Note that the support of $\lambda^{m,\gamma}_{\chi}(Z)u$ is bigger than the support of $u$, depending on ${\rm supp}\,\chi$. Hence, if one wants that ${\rm supp}\,\lambda^{m,\gamma}(Z)u$ is contained in the fixed domain $\mathbb{B}^+$, one has to choose $\chi$ with sufficiently small support.
\end{remark}
\noindent
The second important result is concerned with the conormal operator $r_{m}(Z,\gamma)={\rm Op}^{\gamma}_{\sharp}(r_{m})$, and tells that it essentally behaves as a {\it regularizing} operator on conormal Sobolev spaces.
\begin{lemma}\label{lemmatecnico2}
i. For every $p\in\mathbb{N}$, the conormal operator $r_{m}(Z,\gamma)$ extends as a linear bounded operator, still denoted by $r_{m}(Z,\gamma)$, from $L^2(\mathbb{R}^n_+)$ to $H^p_{tan,\gamma}(\mathbb{R}^n_+)$.
\newline
ii. Moreover, for every $h\in\mathbb{N}$ there exists a positive constant $C_{p,h,n,\chi}$, depending only on $p, h$, $\chi$ and the dimension $n$, such that for all $\gamma\ge 1$ and $u\in L^2(\mathbb{R}^n_+)$:
\begin{equation}\label{kregolarizzazione}
||r_{m}(Z,\gamma)u||_{H^p_{tan,\gamma}(\mathbb{R}^n_+)}\le C_{p,h,n,\chi}\gamma^{-h}||u||_{L^2(\mathbb{R}^n_+)}\,.
\end{equation}
\end{lemma}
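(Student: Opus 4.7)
My plan is to establish the quantitative estimate (ii), from which (i) follows immediately by density, since $C^\infty_{(0)}(\mathbb{R}^n_+)$ is dense in $L^2(\mathbb{R}^n_+)$. The first reduction exploits the isomorphisms supplied by the $\sharp$ operator: by \eqref{operatoreconormale}, together with the fact that $\sharp$ is an isometry from $L^2(\mathbb{R}^n_+)$ onto $L^2(\mathbb{R}^n)$ and, by \eqref{Hmsharp} and Definition \ref{spconormalireali}, also from $H^p_{tan,\gamma}(\mathbb{R}^n_+)$ onto $H^p_\gamma(\mathbb{R}^n)$, the inequality \eqref{kregolarizzazione} is equivalent to
\[
\| {\rm Op}^\gamma(r_m) v \|_{H^p_\gamma(\mathbb{R}^n)} \le C_{p,h,n,\chi}\, \gamma^{-h}\, \|v\|_{L^2(\mathbb{R}^n)}, \qquad v \in \mathcal{S}(\mathbb{R}^n).
\]
Since the symbol $r_m$ does not depend on $x$, ${\rm Op}^\gamma(r_m)$ is a Fourier multiplier, and Plancherel's identity combined with \eqref{normagamma} reduces the whole question to the pointwise bound
\[
\lambda^{p,\gamma}(\xi)\, |r_m(\xi,\gamma)| \le C_{p,h,n,\chi}\, \gamma^{-h}, \qquad \xi \in \mathbb{R}^n, \; \gamma \ge 1.
\]

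The pointwise bound is obtained through a Taylor-moment argument. Setting $\widetilde{\chi} := \mathcal{F}^{-1}\chi \in \mathcal{S}(\mathbb{R}^n)$, the definition \eqref{funzionepesomodificata} together with $\int \widetilde{\chi}(\zeta)\, d\zeta = \chi(0) = 1$ yields
\[
r_m(\xi,\gamma) = \int_{\mathbb{R}^n} \widetilde{\chi}(\zeta)\,\bigl[\lambda^{m,\gamma}(\xi) - \lambda^{m,\gamma}(\xi - \zeta)\bigr]\, d\zeta.
\]
For an arbitrary $N \in \mathbb{N}$, I would Taylor-expand $\lambda^{m,\gamma}(\xi - \zeta)$ about $\xi$ up to order $N$. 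The decisive observation is that $\chi \equiv 1$ in a neighbourhood of the origin forces $\partial^\alpha \chi(0) = 0$ for every $|\alpha| \ge 1$, hence
\[
\int_{\mathbb{R}^n} \zeta^\alpha \widetilde{\chi}(\zeta)\, d\zeta = (-i)^{|\alpha|}\, \partial^\alpha \chi(0) = 0 \quad \text{for all } 1 \le |\alpha| < N,
\]
while the $\alpha = 0$ contribution exactly cancels the $\lambda^{m,\gamma}(\xi)$ piece in the integrand. Only the Taylor remainder survives, giving
\[
r_m(\xi,\gamma) = -\int_{\mathbb{R}^n} \widetilde{\chi}(\zeta)\, R_N(\xi,\zeta)\, d\zeta, \qquad |R_N(\xi,\zeta)| \le C_N\, |\zeta|^N \sup_{|\alpha| = N,\; t \in [0,1]} |\partial^\alpha \lambda^{m,\gamma}(\xi - t\zeta)|.
\]

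To close the estimate I would combine the symbol bound $|\partial^\alpha \lambda^{m,\gamma}(\eta)| \le C_\alpha\, \lambda^{m-|\alpha|,\gamma}(\eta)$ (which reflects $\lambda^{m,\gamma} \in \Gamma^m$) with the parameter-dependent Peetre inequality
\[
\lambda^{s,\gamma}(\xi - t\zeta) \le C_s\, \lambda^{s,\gamma}(\xi)\,(1+|\zeta|^2)^{|s|/2}, \qquad s \in \mathbb{R},\; \gamma \ge 1,\; t \in [0,1],
\]
which follows by rescaling the classical $\gamma = 1$ Peetre inequality. Applied with $s = m - N$, this gives $|R_N(\xi,\zeta)| \le C_N\, \lambda^{m-N,\gamma}(\xi)\, |\zeta|^N (1+|\zeta|)^{|m-N|}$, and integration against the Schwartz function $\widetilde{\chi}$ produces $|r_m(\xi,\gamma)| \le C_N\, \lambda^{m-N,\gamma}(\xi)$, so that
\[
\lambda^{p,\gamma}(\xi)\, |r_m(\xi,\gamma)| \le C_N\, \lambda^{m+p-N,\gamma}(\xi).
\]
Choosing a natural number $N \ge m + p + h$ renders the exponent non-positive, and using the elementary bound $\lambda^{s,\gamma}(\xi) \le \gamma^s$ for $s \le 0$ and $\gamma \ge 1$, we obtain $\lambda^{p,\gamma}(\xi)\, |r_m(\xi,\gamma)| \le C_{p,h,n,\chi}\, \gamma^{m+p-N} \le C_{p,h,n,\chi}\, \gamma^{-h}$, as required.

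The genuinely non-trivial step is the moment-vanishing argument: the flatness of $\chi$ at the origin is what promotes $r_m$ from a generic order-$m$ symbol into an operator that regularizes by arbitrarily many conormal derivatives while simultaneously gaining arbitrarily many powers of $\gamma^{-1}$. The rest is a routine packaging of Peetre's inequality with the trivial negative-order bound $\lambda^{s,\gamma} \le \gamma^s$, together with the bookkeeping needed to identify the seminorms of $\widetilde{\chi}$ (and thus of $\chi$) on which the constant $C_{p,h,n,\chi}$ depends.
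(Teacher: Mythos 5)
Your proof is correct, but it takes a genuinely different route from the paper's. You reduce the claim, via the $\sharp$ isomorphism, to the pointwise Fourier-multiplier bound $\lambda^{p,\gamma}(\xi)\,|r_m(\xi,\gamma)|\le C\gamma^{-h}$ (legitimate because $r_m$ is $x$-independent), and then prove this symbol estimate through a Taylor expansion of $\lambda^{m,\gamma}(\xi-\zeta)$ combined with the vanishing of all positive-order moments of $\mathcal{F}^{-1}\chi$, which is the Fourier-dual expression of the flatness of $\chi$ near the origin. The paper's proof (Appendix \ref{appA.2}) instead works on the kernel side: it uses $\mathcal{F}^{-1}(r_m)=(1-\chi)\,\mathcal{F}^{-1}(\lambda^{m,\gamma})$, shows that $\partial^\beta\mathcal{F}^{-1}(r_m)\in L^1(\mathbb{R}^n)$ with $L^1$-norm $O(\gamma^{-h})$ (exploiting that $1-\chi$ and every $\partial^{\beta-\nu}\chi$ with $\nu<\beta$ vanish near $z=0$, so the singularity of $\mathcal{F}^{-1}(\lambda^{m,\gamma})$ at the origin is cut away, and tracking decay at infinity via integration by parts), and concludes by Young's convolution inequality. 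Both arguments extract the $\gamma^{-h}$ from the same elementary bound $\lambda^{-2k,\gamma}\le\gamma^{-2k}$, and both hinge on the fact that $\chi\equiv 1$ near $0$, but exploited dually: you use that $\int\zeta^\alpha(\mathcal{F}^{-1}\chi)(\zeta)\,d\zeta = c\,\partial^\alpha\chi(0)=0$, whereas the paper uses the physical-side support of $1-\chi$. Your approach is arguably cleaner for the $L^2$ setting, while the paper's $L^1$ kernel bound is slightly more flexible (it would give $L^q$ boundedness for any $q$), though that extra generality is not used. One small remark: your notation $\widetilde{\chi}:=\mathcal{F}^{-1}\chi$ collides with the paper's use of $\widetilde{\chi}$ for the $x'$-factor in the tensor-product decomposition \eqref{prodottotensoriale}, so you should rename it to avoid confusion.
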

\noindent
The proof of Lemmata \ref{supporto}, \ref{lemmatecnico2} is postponed to Appendix \ref{appA}.
\newline
In the following sections, the above analysis will be applied to the operator $\lambda^{-1,\gamma}(Z)$. According to \eqref{funzionepesomodificata}, we decompose
\begin{equation}\label{splitting}
\lambda^{-1,\gamma}(Z)=\lambda^{-1,\gamma}_{\chi}(Z)+r_{-1}(Z,\gamma)\,.
\end{equation}
\subsection{A boundary operator}\label{bdrycndt}
As it was already explained in Section \ref{gs}, we need to derive the problem analogous to \eqref{sbvp} satisfied by $(\lambda^{-1,\gamma}(Z)u,\lambda^{-1,\gamma}(D')\psi)$ for given smooth functions $(u,\psi)$. Actually, as we said, $\lambda^{-1,\gamma}(Z)$ must be replaced by its modification $\lambda^{-1,\gamma}_\chi(Z)$ (see \eqref{splitting}). Analogously, we have to introduce an appropriate modification of $\lambda^{-1,\gamma}(D')$, to be used as a ``boundary counterpart'' of $\lambda^{-1,\gamma}_\chi(Z)$: this new operator comes from computing the value of $\lambda^{-1,\gamma}_{\chi}(Z)u$ on the boundary $\{x_1=0\}$. To this end, it is worthwhile to make an additional hypothesis about the smooth function $\chi$ involved in the definition of $\lambda^{-1,\gamma}_{\chi}(Z)$. We assume that $\chi$ has the form:
\begin{equation}\label{prodottotensoriale}
\forall\,x=(x_1,x')\in\mathbb{R}^n\,,\quad \chi(x)=\chi_1(x_1)\widetilde{\chi}(x')\,,
\end{equation}
where $\chi_1\in C^{\infty}(\mathbb{R})$ and $\widetilde{\chi}\in C^{\infty}(\mathbb{R}^{n-1})$ are given positive even functions, to be chosen in such a way that conditions \eqref{chi} are made satisfied.
\newline
As we did in Section \ref{mfm}, the result we are going to present here are stated for the general conormal operator $\lambda^{m,\gamma}(Z)$ with an arbitrary order $m\in\mathbb{R}$. All the proofs will be given in the Appendix \ref{appA}.
\newline
Following closely the arguments employed to prove \cite[Proposition 4.10]{moseBVP}, we are able to get the following
\begin{proposition}\label{bordo}
Assume that $\chi$ obeys the assumptions \eqref{chi}, \eqref{prodottotensoriale}. Then, for all $\gamma\ge 1$ and $m\in\mathbb{R}$ the function $b'_{m}(\xi',\gamma)$ defined by
\begin{equation}\label{simbolobordobis}
b'_{m}(\xi',\gamma):=(2\pi)^{-n}\int_{\mathbb{R}^n}\lambda^{m,\gamma}(\eta_1,\eta'+\xi')\left(e^{(\cdot)_1/2}\chi_1\right)^{\wedge_1}(\eta_1)\widehat{\widetilde{\chi}}(\eta')\,d\eta\,,\quad\forall\,\xi'\in\mathbb{R}^{n-1}\,,
\end{equation}
is a $\gamma-$depending symbol in $\mathbb{R}^{n-1}$ belonging to $\Gamma^{m}$, where $\wedge_1$ is used to denote the one-dimensional Fourier transformation with respect to $x_1$, while $\wedge$ denotes the $(n-1)-$dimensional Fourier transformation with respect to $x'$. Moreover, for all functions $u\in C^{\infty}_{(0)}(\mathbb{R}^n_+)$ there holds
\begin{equation}\label{formulatraccia}
\forall\,x'\in\mathbb{R}^{n-1}\,,\quad (\lambda^{m,\gamma}_{\chi}(Z)u)_{|\,x_1=0}(x')=b'_{m}(D',\gamma)(u_{|\,x_1=0})(x')\,.
\end{equation}
\end{proposition}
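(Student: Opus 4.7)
The plan is to prove the two assertions separately. For the first (that $b'_m\in\Gamma^m(\mathbb{R}^{n-1})$), I would differentiate the integral in \eqref{simbolobordobis} under the integral sign with respect to $\xi'$, which is justified by the rapid decay of the factors $(e^{(\cdot)_1/2}\chi_1)^{\wedge_1}$ and $\widehat{\widetilde{\chi}}$: since $\chi_1\in C^\infty_0(\mathbb{R})$ and $\widetilde{\chi}\in C^\infty_0(\mathbb{R}^{n-1})$ are compactly supported, $e^{t/2}\chi_1(t)$ is also $C^\infty_0$, so its one-dimensional Fourier transform is Schwartz, and so is $\widehat{\widetilde{\chi}}$. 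Using the symbol estimate $|\partial^{\alpha'}_{\xi'}\lambda^{m,\gamma}(\eta_1,\eta'+\xi')|\le C_{\alpha'}\lambda^{m-|\alpha'|,\gamma}(\eta_1,\eta'+\xi')$ together with Peetre's inequality $\lambda^{s,\gamma}(\eta_1,\eta'+\xi')\le C_s\,\lambda^{s,\gamma}(\xi')\,\lambda^{|s|}(\eta_1,\eta')$, one bounds the integrand by
\[
C_{\alpha'}\,\lambda^{m-|\alpha'|,\gamma}(\xi')\,\lambda^{|m-|\alpha'||}(\eta_1,\eta')\,|(e^{(\cdot)_1/2}\chi_1)^{\wedge_1}(\eta_1)|\,|\widehat{\widetilde{\chi}}(\eta')|,
\]
whose integral in $\eta$ is finite and independent of $\xi'$ and of $\gamma\ge 1$. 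This yields $|\partial^{\alpha'}_{\xi'}b'_m(\xi',\gamma)|\le C'_{\alpha'}\lambda^{m-|\alpha'|,\gamma}(\xi')$, hence $b'_m\in\Gamma^m$.

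\textbf{Trace identity.} For the second assertion, I would start from the representation
\[
(\lambda^{m,\gamma}_{\chi}(Z)u)(x_1,x')=\tfrac{1}{\sqrt{x_1}}\,(\lambda^{m,\gamma}_{\chi}(D)u^{\sharp})(\log x_1,x'),\qquad x_1>0,
\]
coming from \eqref{composizione}, and insert the Fourier representation of $\lambda^{m,\gamma}_{\chi}(D)u^{\sharp}$ together with the explicit expression for $\widehat{u^{\sharp}}(\tau,\xi')$ obtained via the change of variable $s=e^{t}$ in the definition \eqref{diesisbquadro} of $\sharp$. Next I would exploit the tensor product structure \eqref{prodottotensoriale} to write $\mathcal{F}^{-1}\chi=\mathcal{F}^{-1}_1\chi_1\otimes\mathcal{F}^{-1}\widetilde{\chi}$ and thereby to split the convolution $\lambda^{m,\gamma}_{\chi}=\mathcal{F}^{-1}\chi\ast\lambda^{m,\gamma}$ into a $\tau$-convolution against $\mathcal{F}^{-1}_1\chi_1$ and an $\eta'$-convolution against $\mathcal{F}^{-1}\widetilde{\chi}$. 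The $x_1$-dependence then concentrates in a single one-dimensional integral of the form $\int x_1^{-1/2+i\tau}\mathcal{F}^{-1}_1\chi_1(\tau-\eta_1)\,d\tau$, which, by one more change of variable $t=\log x_1$ and Fourier inversion, collapses in the limit $x_1\to 0^{+}$ precisely to $(e^{(\cdot)_1/2}\chi_1)^{\wedge_1}(\eta_1)$. Combining with the remaining $\eta'$-convolution produces on the right-hand side exactly the symbol $b'_m(\xi',\gamma)$ of \eqref{simbolobordobis} acting on $\widehat{u_{|x_1=0}}(\xi')$, which is the Fourier-side form of \eqref{formulatraccia}.

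\textbf{Main obstacle.} The central technical difficulty is the rigorous passage to the limit $x_1\to 0^{+}$: the factor $x_1^{i\tau}$ appearing above is highly oscillatory as $x_1\to 0$, so one cannot naively pull the limit inside the $\tau$-integral. The tensor product assumption \eqref{prodottotensoriale} is used precisely to reduce the problematic $\tau$-integration to the Fourier transform of the compactly supported smooth function $e^{t/2}\chi_1$; once this reduction is performed, smoothness of $u$ (which gives $u(e^{t}e^{s},y')\to u(0,y')$ as $t\to-\infty$ uniformly on compacts in $s,y'$) together with the Schwartz decay of the remaining convolution kernels allows a dominated-convergence argument to identify the limit. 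The corresponding bounds and convergence should be made uniform in the relevant parameters by reusing the Peetre-type estimates already invoked in the first part of the proof, which completes the verification of \eqref{formulatraccia}.
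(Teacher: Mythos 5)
Your symbol-estimate argument for the first assertion (differentiation under the integral plus Peetre's inequality $\lambda^{s,\gamma}(\eta_1,\eta'+\xi')\le C\lambda^{s,\gamma}(\xi')\lambda^{|s|}(\eta)$) is sound, and in fact more direct than the paper's, which obtains membership in $\Gamma^m$ as a by-product of the splitting $b'_m=\lambda^{m,\gamma}+\beta_m$ with $\beta_m\in\Gamma^{m-2}$ proved by Taylor expansion in the subsequent Lemma. Up to that point your proposal is fine.

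The trace identity is where there is a genuine gap, and the specific intermediate step you write down is wrong. Consider the one-dimensional integral you isolate. With the convention $\mathcal{F}^{-1}_1g(t)=(2\pi)^{-1}\int e^{it\sigma}g(\sigma)\,d\sigma$, one computes, after the substitution $\sigma=\tau-\eta_1$,
\begin{equation*}
\int_{\mathbb{R}} x_1^{-1/2+i\tau}\,(\mathcal{F}^{-1}_1\chi_1)(\tau-\eta_1)\,d\tau
= x_1^{-1/2+i\eta_1}\int_{\mathbb{R}}e^{i\sigma\log x_1}(\mathcal{F}^{-1}_1\chi_1)(\sigma)\,d\sigma
= x_1^{-1/2+i\eta_1}\,\chi_1(-\log x_1)\,.
\end{equation*}
Since $\chi_1$ is supported in $\{|t|\le\varepsilon_0\}$, the right-hand side vanishes identically for all $0<x_1<e^{-\varepsilon_0}$: it does not converge to $(e^{(\cdot)_1/2}\chi_1)^{\wedge_1}(\eta_1)$, it is zero for all small $x_1$. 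This shows that the $x_1$-dependence of the full expression cannot, in fact, be concentrated into this single $\tau$-integral; the factor $\widehat{u^\sharp}(\tau,\xi')$, whose Mellin structure couples $\tau$ to the $x_1$-behaviour of $u$, cannot be separated out. A dominated-convergence argument is also unavailable at the level you suggest, since $|x_1^{-1/2+i\tau}|=x_1^{-1/2}\to\infty$; whatever convergence occurs must come from cancellation, not from domination, so the "main obstacle" you flag is not merely a technicality to be smoothed over — it invalidates the step as written.

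The paper's proof avoids this entirely by never leaving physical space. Starting from $(\lambda^{m,\gamma}_{\chi}(Z)u)^{\sharp}(x)=\big\langle\mathcal{F}^{-1}(\lambda^{m,\gamma}_\chi),\,u^{\sharp}(x-\cdot)\big\rangle$ and unwinding the definitions of $\sharp$ and of $\lambda^{m,\gamma}_\chi=\chi\cdot\mathcal{F}^{-1}\lambda^{m,\gamma}$ in one step, one obtains the identity
\begin{equation*}
\lambda^{m,\gamma}_{\chi}(Z)u(x)=\Big\langle\mathcal{F}^{-1}(\lambda^{m,\gamma})\,,\,\chi(\cdot)\,e^{-(\cdot)_1/2}\,u\big(x_1e^{-(\cdot)_1},\,x'-(\cdot)'\big)\Big\rangle\,,\qquad x_1\ge 0\,,
\end{equation*}
in which $x_1$ enters only through the argument $x_1e^{-y_1}$ of the smooth function $u$. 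Evaluating at $x_1=0$ is then a literal substitution — no $x_1\to 0^+$ limit, no oscillatory Mellin integral — and the tensor decomposition \eqref{prodottotensoriale} together with two applications of Fubini's theorem produces the symbol \eqref{simbolobordobis} directly. You should switch to this representation before attempting to split the convolution: the point of \eqref{prodottotensoriale} is not to tame an oscillatory limit but to separate a $y_1$-integral (giving $(e^{(\cdot)_1/2}\chi_1)^{\wedge_1}$) from a $y'$-integral (giving, after Fourier inversion on $u_{|x_1=0}$, the convolution against $\widehat{\widetilde{\chi}}$).
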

\noindent
The next Lemma shows that the boundary pseudo-differential operator $b'_{m}(D',\gamma)$ differs from the operator $\lambda^{m,\gamma}(D')$ by a lower order remainder.
\begin{lemma}\label{lemmatecnico5}
For $m\in\mathbb{R}$, let $b'_{m}(\xi',\gamma)$ be defined by \eqref{simbolobordobis}. Then there exists a symbol $\beta_{m}(\xi',\gamma)\in\Gamma^{m-2}$ such that:
\begin{equation}\label{splittingbordo}
b'_{m}(\xi',\gamma)=\lambda^{m,\gamma}(\xi')+\beta_{m}(\xi',\gamma)\,,\quad\forall\,\xi'\in\mathbb{R}^{n-1}\,.
\end{equation}
\end{lemma}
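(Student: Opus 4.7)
The plan is to perform a second-order Taylor expansion of $\lambda^{m,\gamma}(\eta_1,\eta'+\xi')$ with respect to the variable $\eta\in\mathbb{R}^n$ about the origin, substitute into the integral \eqref{simbolobordobis}, and exploit two independent cancellations that annihilate the zeroth- and first-order Taylor contributions: the former produces exactly $\lambda^{m,\gamma}(\xi')$, the latter vanishes identically, and the integral remainder lies in $\Gamma^{m-2}$.

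\textbf{Setting things up.} I would set $g(x):=e^{x_1/2}\chi_1(x_1)\widetilde{\chi}(x')$, which belongs to $C^\infty_0(\mathbb R^n)$ by \eqref{chi}--\eqref{prodottotensoriale}; since $\widehat{g}(\eta)=(e^{(\cdot)_1/2}\chi_1)^{\wedge_1}(\eta_1)\widehat{\widetilde\chi}(\eta')$ is then Schwartz, \eqref{simbolobordobis} becomes
\begin{equation*}
b'_{m}(\xi',\gamma)=(2\pi)^{-n}\int_{\mathbb{R}^n}\lambda^{m,\gamma}(\eta_1,\eta'+\xi')\,\widehat{g}(\eta)\,d\eta.
\end{equation*}
Writing Taylor's formula with integral remainder for $F(\eta):=\lambda^{m,\gamma}(\eta_1,\eta'+\xi')$,
\begin{equation*}
F(\eta)=F(0)+\sum_{j=1}^n\partial_{\eta_j}F(0)\,\eta_j+2\!\!\!\sum_{|\alpha|=2}\frac{\eta^\alpha}{\alpha!}\int_0^1(1-t)\bigl(\partial^\alpha\lambda^{m,\gamma}\bigr)(t\eta_1,t\eta'+\xi')\,dt,
\end{equation*}
and using Fourier inversion, one has $(2\pi)^{-n}\int\widehat{g}(\eta)\,d\eta=g(0)=\chi_1(0)\widetilde{\chi}(0)=\chi(0)=1$ and $(2\pi)^{-n}\int\eta_j\widehat{g}(\eta)\,d\eta=-i\partial_{x_j}g(0)$.

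\textbf{The two cancellations.} Since $\chi_1$ and $\widetilde{\chi}$ are even, $\chi_1'(0)=0$ and $\partial_{x_j}\widetilde{\chi}(0)=0$ for $j\ge 2$; hence $\partial_{x_j}g(0)=0$ for $j\ge 2$ while $\partial_{x_1}g(0)=\tfrac12$. On the other hand, $F(0)=\lambda^{m,\gamma}(\xi')$, and since $\partial_{\zeta_j}\lambda^{m,\gamma}(\zeta)=m\zeta_j\lambda^{m-2,\gamma}(\zeta)$, one gets $\partial_{\eta_1}F(0)=0$ (first coordinate vanishes) while $\partial_{\eta_j}F(0)=m\xi'_{j-1}\lambda^{m-2,\gamma}(\xi')$ for $j\ge 2$ (a priori nonzero). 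The linear Taylor term therefore contributes $\sum_j\partial_{\eta_j}F(0)\cdot(-i)\partial_{x_j}g(0)=0$, because the factor that does not vanish on the spatial side is paired with a factor that does vanish on the symbolic side, and vice versa. Consequently
\begin{equation*}
b'_m(\xi',\gamma)=\lambda^{m,\gamma}(\xi')+\beta_m(\xi',\gamma),\qquad
\beta_m(\xi',\gamma):=\!\!\!\sum_{|\alpha|=2}\!\frac{2}{\alpha!}\!\int_0^1\!(1-t)(2\pi)^{-n}\!\int\eta^\alpha(\partial^\alpha\lambda^{m,\gamma})(t\eta_1,t\eta'+\xi')\widehat g(\eta)\,d\eta\,dt.
\end{equation*}

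\textbf{The remainder lies in $\Gamma^{m-2}$.} Differentiating $\beta_m$ under the integral with respect to $\xi'$, each $\partial^\beta_{\xi'}$ produces one extra tangential derivative of $\lambda^{m,\gamma}$, so by the estimate $|\partial^\gamma\lambda^{m,\gamma}(\zeta)|\le C_\gamma\lambda^{m-|\gamma|,\gamma}(\zeta)$ the integrand is controlled by $C\,|\eta|^{2}\lambda^{m-2-|\beta|,\gamma}(t\eta_1,t\eta'+\xi')|\widehat g(\eta)|$. The Peetre-type inequality $\lambda^{s,\gamma}(\mu+\nu)\le c_s\lambda^{s,\gamma}(\nu)(1+|\mu|)^{|s|}$ (applied with $\nu=(0,\xi')$ and $\mu=(t\eta_1,t\eta')$, using $t\in[0,1]$) then gives $\lambda^{m-2-|\beta|,\gamma}(t\eta_1,t\eta'+\xi')\le C\lambda^{m-2-|\beta|,\gamma}(\xi')(1+|\eta|)^{|m-2-|\beta||}$. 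Integrating against the rapidly decreasing $\widehat g$ absorbs all polynomial factors in $\eta$, yielding $|\partial^\beta_{\xi'}\beta_m(\xi',\gamma)|\le C'_\beta\lambda^{m-2-|\beta|,\gamma}(\xi')$ uniformly in $\gamma\ge 1$, which is exactly \eqref{symbolestimates} at order $m-2$.

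\textbf{Main obstacle.} The delicate point, and the one that truly uses the hypotheses \eqref{chi} and \eqref{prodottotensoriale} together, is the simultaneous vanishing of \emph{both} factors in the first-order Taylor term. Were $\chi$ not of tensor form, or not even, the linear term would produce a genuine $\Gamma^{m-1}$ residue (in particular $\partial_{\eta_j}\lambda^{m,\gamma}(0,\xi')=m\xi'_{j-1}\lambda^{m-2,\gamma}(\xi')\cdot$ \emph{something nonzero} would appear for $j\ge 2$), and the conclusion $\beta_m\in\Gamma^{m-2}$ would collapse to the weaker $\beta_m\in\Gamma^{m-1}$ -- insufficient to treat this as a bona fide lower-order perturbation of $\lambda^{m,\gamma}(D')$ at the boundary in the later estimates.
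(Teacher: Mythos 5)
Your proof is correct and follows essentially the same route as the paper: a second-order Taylor expansion of $\lambda^{m,\gamma}(\eta_1,\eta'+\xi')$ about $\eta=0$, with the first-order term annihilated by the complementary vanishing of $\partial_{\eta_1}\lambda^{m,\gamma}(0,\xi')$ on one side and of $\partial_{x_j}\phi(0)$ ($j\ge2$, from evenness) on the other, and the integral remainder estimated in $\Gamma^{m-2}$ via the Peetre inequality against the rapidly decreasing $\widehat{\phi}$. The closing observation that evenness is what kills the first-order residue is accurate and matches the role these hypotheses play in the paper.
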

\noindent
As a consequence of Proposition \ref{bordo} and Lemma \ref{supporto}, we see now that, like $\lambda^{m,\gamma}_{\chi}(Z)$, the boundary operator $b'_m(D',\gamma)$ preserves the compactness of the support of functions on $\mathbb{R}^{n-1}$.
\begin{corollary}\label{supportobordo}
For all $m\in\mathbb{R}$ and $\psi\in C^\infty_0(\mathbb{R}^{n-1})$ with ${\rm supp}\,\psi\subset \mathcal{B}(0;\delta_0)$, then
\begin{equation}
{\rm supp}\,b'_{m}(D',\gamma)\psi\subset\mathcal{B}(0;1)\,.
\end{equation}
\end{corollary}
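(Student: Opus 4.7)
The strategy is to reduce the claim to Lemma~\ref{supporto} via the trace identity \eqref{formulatraccia} of Proposition~\ref{bordo}. Given $\psi\in C^\infty_0(\mathbb{R}^{n-1})$ with $\mathrm{supp}\,\psi\subset\mathcal{B}(0;\delta_0)$, the first step is to construct an auxiliary function $u\in C^\infty_{(0)}(\mathbb{R}^n_+)$ whose restriction to $\{x_1=0\}$ coincides with $\psi$ and whose support is contained in $\mathbb{B}^+_{\delta_0}$. A convenient explicit choice is the tensor product
\[
u(x_1,x'):=\eta(x_1)\,\psi(x'),
\]
where $\eta\in C^\infty_0(\mathbb{R})$ is any bump satisfying $\eta(0)=1$ and $\mathrm{supp}\,\eta\subset(-\delta_0,\delta_0)$. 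Clearly $u\in C^\infty_{(0)}(\mathbb{R}^n_+)$, $u_{|x_1=0}=\psi$, and $\mathrm{supp}\,u\subset [0,\delta_0)\times\mathcal{B}(0;\delta_0)\subset\mathbb{B}^+_{\delta_0}$.

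Next, I apply Lemma~\ref{supporto}, noting that the cut-off $\chi$ is precisely the one used throughout this subsection, namely one that simultaneously obeys \eqref{chi} (with $\varepsilon_0=\varepsilon_0(\delta_0)$ chosen small enough as prescribed by that lemma) and the product form \eqref{prodottotensoriale} required by Proposition~\ref{bordo}. Since $\mathrm{supp}\,u\subset\mathbb{B}^+_{\delta_0}$, the lemma yields
\[
\mathrm{supp}\,\lambda^{m,\gamma}_\chi(Z)u\subset\mathbb{B}^+=\{(x_1,x'):\,0\le x_1<1,\ |x'|<1\},
\]
whence the trace $(\lambda^{m,\gamma}_\chi(Z)u)_{|x_1=0}$ is supported in $\{|x'|<1\}=\mathcal{B}(0;1)$.

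Finally, Proposition~\ref{bordo} applied to this $u$ gives
\[
b'_m(D',\gamma)\psi=b'_m(D',\gamma)(u_{|x_1=0})=\bigl(\lambda^{m,\gamma}_\chi(Z)u\bigr)_{|x_1=0},
\]
and combining with the support estimate above one obtains $\mathrm{supp}\,b'_m(D',\gamma)\psi\subset\mathcal{B}(0;1)$, which is the desired conclusion. No real obstacle is present: the entire argument is a direct chaining of Lemma~\ref{supporto} and Proposition~\ref{bordo}, the only (routine) point being the explicit choice of the extension $u$ with the correct support and trace.
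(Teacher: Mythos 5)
Your proof is correct and follows essentially the same route as the paper's: construct an extension $\Psi(x_1,x')=\eta(x_1)\psi(x')$ supported in $\mathbb{B}^+_{\delta_0}$ with trace $\psi$, then chain Lemma \ref{supporto} with the trace identity of Proposition \ref{bordo}. The only cosmetic difference is that the paper asks $\eta\equiv 1$ on $[0,\delta_0/2)$ rather than merely $\eta(0)=1$, which is immaterial for the argument.
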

\noindent
In the following the results stated in Proposition \ref{bordo}, Lemma \ref{lemmatecnico5} and Corollary \ref{supportobordo} will be applied to the case of $m=-1$.
\subsection{Regularized BVP}\label{sec_regBVP}
From now on, we will focus on the proof of the estimate \eqref{l2h1estimate} stated in the first part of Theorem \ref{mainthm}, under the assumption $(H)_1$ about the BVP \eqref{sbvp}. The second part of Theorem \ref{mainthm} (estimate \eqref{l2estimate}, under the assumption $(H)_2$) follows by developing similar arguments to those explained here below; we will write in details only those steps which make the difference between the proof of the two statements $1$ and $2$ (see Section \ref{4.7}).
\newline
Let $(u,\psi)$ be given smooth functions obeying the assumptions of Theorem \ref{mainthm}. Given arbitrary symbols $\rho_\sharp=\rho_\sharp(x,\xi,\gamma)\in\Gamma^0$, $\ell_\sharp=\ell_\sharp(x',\xi',\gamma), b_\sharp=b_\sharp(x',\xi',\gamma)\in\Gamma^0$, let us set
\begin{equation}\label{tf}
F:=\mathcal L_\gamma u+\rho_\sharp(x,Z,\gamma)u\,,
\end{equation}
\begin{equation}\label{tb}
g:=b_\gamma\psi+\mathcal M^s_\gamma u^{I,s}+M^Iu^I+b_{\sharp}(x',D',\gamma)\psi+\ell_\sharp(x',D',\gamma)u^{I,s}\,.
\end{equation}
We are going to derive a corresponding BVP for the pair of functions $(\lambda^{-1,\gamma}_{\chi}(Z)u,b'_{-1}(D',\gamma)\psi)$, to which the a priori estimate \eqref{h1h2estimate} will be applied. Notice that, in view of Lemma \ref{supporto} and Corollary \ref{supportobordo}, the functions $\lambda^{-1,\gamma}_{\chi}(Z)u$, $b'_{-1}(D',\gamma)\psi$ are supported on $\mathbb{B}^+$ and $\mathcal{B}(0;1)$, as required in the hypothesis $(H)_1$, provided the function $\chi$ satisfies the assumptions \eqref{chi} with a sufficiently small $0<\varepsilon_0<1$.
\subsubsection{The interior equation}\label{interioreqt}
We follow the strategy already explained in Section \ref{gs}, where now the role of the operator $\lambda^{-1,\gamma}(Z)$ is replaced by $\lambda^{-1,\gamma}_{\chi}(Z)$. Thus, for a given smooth function $u\in C^\infty_{(0)}(\mathbb{R}^n_+)$, supported on $\mathbb{B}^+_{\delta_0}$, from \eqref{tf}, we find that
\begin{equation}\label{sistemaregolarizzato}
\mathcal L_\gamma(\lambda^{-1,\gamma}_{\chi}(Z)u)+\rho_\sharp(\lambda^{-1,\gamma}_{\chi}(Z)u)+[\lambda^{-1,\gamma}_{\chi}(Z),\mathcal L_\gamma+\rho_\sharp]u=\lambda^{-1,\gamma}_{\chi}(Z)F\,,\quad{\rm in}\,\,\mathbb{R}^n_+\,,
\end{equation}
where here and in the rest of this section, it is written $\rho_\sharp$ instead of $\rho_\sharp(x,Z,\gamma)$, in order to shorten formulas.
\newline
We will see that the commutator term $[\lambda^{-1,\gamma}_{\chi}(Z),\mathcal L_\gamma+\rho_\sharp]u$, involved in the left-hand side of the above equation, can be restated as a lower order pseudo-differential operator of conormal type with respect to $\lambda^{-1,\gamma}_{\chi}(Z)u$, up to some ``smoothing reminder'' to be treated as a part of the source term in the right-hand side of the equation.
\newline
To this end, we proceed as follows. Firstly, we decompose the commutator term in the left-hand side of \eqref{sistemaregolarizzato} as the sum of two contributions corresponding respectively to the {\it tangential} and {\it normal} components of $\mathcal L_\gamma$.
\newline
In view of \eqref{strutturaablocchi}, \eqref{singularity}, we may write the coefficient $A_1$ of the normal derivative $\partial_1$ in the expression \eqref{pdo} of $\mathcal L_\gamma$ as
\begin{equation}\label{strutturaA1}
A_1=A^1_1+A^2_1\,,\quad A^1_1:=\begin{pmatrix}A^{I,I}_1 & 0\\ 0 & 0\end{pmatrix}\,,\,\,A^2_{1\,|\,\,x_1=0}=0\,,
\end{equation}
hence
$$
A^2_1\partial_1=H_1Z_1\,,
$$
where $H_1(x)=x_1^{-1}A_1^2(x)\in C^{\infty}_{(0)}(\mathbb{R}^n_+)$. Accordingly, we split $\mathcal L_\gamma$ as
\begin{equation}\label{Ltan}
\mathcal L_\gamma=A^1_1\partial_1+\mathcal L_{tan, \gamma}\,,\quad \mathcal L_{tan, \gamma}:=\gamma I_N+H_1Z_1+\sum\limits_{j=2}^n A_jZ_j+B\,.
\end{equation}
Consequently, we have:
\begin{equation}\label{decomposizione}
[\lambda^{-1,\gamma}_{\chi}(Z),\mathcal L_\gamma+\rho_\sharp]u=[\lambda^{-1,\gamma}_{\chi}(Z),A_1^1\partial_1]u+[\lambda^{-1,\gamma}_{\chi}(Z),\mathcal L_{tan, \gamma}+\rho_\sharp]u\,.
\end{equation}
Note that $\mathcal L_{tan,\gamma}+\rho_\sharp$ is just a conormal operator of order $1$, according to the terminology introduced in Section \ref{conormalcalculus}.
\subsubsection{The tangential commutator}\label{commutatoretangenziale}
 Concerning the {\it tangential commutator} term $[\lambda^{-1,\gamma}_{\chi}(Z),\mathcal L_{tan, \gamma}+\rho_\sharp]u$, we use the identity
\begin{equation}\label{identita1}
\lambda^{1,\gamma}(Z)\lambda^{-1,\gamma}(Z)=Id\,,
\end{equation}
and formula \eqref{splitting} to rewrite it as follows
\begin{equation}\label{splitting1}
\begin{array}{ll}
\displaystyle[\lambda^{-1,\gamma}_{\chi}(Z),\mathcal L_{tan, \gamma}+\rho_\sharp]u=[\lambda^{-1,\gamma}_{\chi}(Z),\mathcal L_{tan, \gamma}+\rho_\sharp]\lambda^{1,\gamma}(Z)\lambda^{-1,\gamma}(Z)u\\
\\
\displaystyle\quad =[\lambda^{-1,\gamma}_{\chi}(Z),\mathcal L_{tan, \gamma}+\rho_\sharp]\lambda^{1,\gamma}(Z)\left(\lambda^{-1,\gamma}_\chi(Z)u\right)+[\lambda^{-1,\gamma}_{\chi}(Z),\mathcal L_{tan, \gamma}+\rho_\sharp]\lambda^{1,\gamma}(Z)r_{-1}(Z,\gamma)u\,.
\end{array}
\end{equation}
Since $\lambda^{-1,\gamma}_\chi$ is a scalar symbol, from the symbolic calculus (see Proposition \ref{prodottoecommutatore}) we know that
\begin{equation}\label{rho0tan}
\rho_{0, tan}(x,Z,\gamma):=[\lambda^{-1,\gamma}_{\chi}(Z),\mathcal L_{tan, \gamma}+\rho_\sharp]\lambda^{1,\gamma}(Z)
\end{equation}
is a conormal pseudo-differential operator with symbol $\rho_{0, tan}(x,\xi,\gamma)\in\Gamma^0$. Hence, the first term in the decomposition provided by \eqref{splitting1} can be regarded as an additional lower order term with respect to $\lambda^{-1,\gamma}_\chi(Z)u$, besides $\rho_\sharp(\lambda^{-1,\gamma}_\chi(Z)u)$, in the equation \eqref{sistemaregolarizzato} (see formula \eqref{tilderho}). On the other hand, from Lemma \ref{lemmatecnico2}, the second term in the decomposition \eqref{splitting1}
\begin{equation}\label{R-1}
R_{-1}(x,Z,\gamma)u:=[\lambda^{-1,\gamma}_{\chi}(Z),\mathcal L_{tan, \gamma}+\rho_\sharp]\lambda^{1,\gamma}(Z)r_{-1}(Z,\gamma)u
\end{equation}
can be moved to the right-hand side of the equation \eqref{sistemaregolarizzato} and treated as a part of the source term (see Section \ref{sectintcomm}).
\subsubsection{The normal commutator}\label{commutatorenormale}
We consider now the {\it normal commutator} term $[\lambda^{-1,\gamma}_{\chi}(Z),A_1^1\partial_1]u$ involved in \eqref{decomposizione}. With respect to the tangential term studied in Section \ref{commutatoretangenziale}, here the analysis is  little more technical.
\newline
First of all, we notice that, due to the structure of the matrix $A^1_1$ (see \eqref{strutturaA1}), the commutator $[\lambda^{-1,\gamma}_{\chi}(Z),A_1^1\partial_1]$ acts non trivially only on the noncharacteristic component of the vector function $u$; namely we have:
\begin{equation}\label{componenti}
[\lambda^{-1,\gamma}_{\chi}(Z),A_1^1\partial_1]u=
\begin{pmatrix}
[\lambda^{-1,\gamma}_{\chi}(Z),A^{I,I}_1\partial_1]u^I\\
0
\end{pmatrix}\,.
\end{equation}
Therefore, we focus on the study of the first nontrivial component of the commutator term. Note that the commutator $[\lambda^{-1,\gamma}_{\chi}(Z),A_1^{I,I}\partial_1]$ cannot be merely treated by the tools of the conormal calculus developed in Section \ref{conormalcalculus}, because of the presence of the effective normal derivative $\partial_1$ (recall that $A_1^{I,I}$ is invertible).
This section is devoted to the study of the normal commutator $[\lambda^{-1,\gamma}_{\chi}(Z),A^{I,I}_1\partial_1]u^{I}$. The following result is of fundamental importance for the sequel. Here again, for the sake of generality, the result is given with a general order $m$.
\begin{proposition}\label{normale}
For all $m\in\mathbb{R}$, there exists a symbol $q_{m}(x,\xi,\gamma)\in\Gamma^{m-1}$ such that
\begin{equation}\label{identitacommutatore}
[\lambda^{m,\gamma}_{\chi}(Z),A^{I,I}_1\partial_1]w=q_{m}(x,Z,\gamma)(\partial_1 w)\,,\quad\forall\,w\in C^{\infty}_{(0)}(\mathbb{R}^n_+)\,,\,\,\forall\,\gamma\ge 1\,.
\end{equation}
\end{proposition}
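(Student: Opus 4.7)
The plan is to apply the Leibniz rule
\[
[\lambda^{m,\gamma}_\chi(Z), A_1^{I,I}\partial_1] \;=\; [\lambda^{m,\gamma}_\chi(Z), A_1^{I,I}]\,\partial_1 \;+\; A_1^{I,I}\,[\lambda^{m,\gamma}_\chi(Z), \partial_1]
\]
and handle the two pieces separately. For the first piece, since $A_1^{I,I}\in C^{\infty}_{(0)}(\mathbb R^n_+)$ its $\natural$-transform lies in $\Gamma^0$, and the conormal version of Proposition \ref{prodottoecommutatore} yields $[\lambda^{m,\gamma}_\chi(Z), A_1^{I,I}] = q^{(1)}(x,Z,\gamma)$ with $q^{(1)}\in\Gamma^{m-1}$ (the scalarity of $\lambda^{m,\gamma}_\chi$ supplies the one-order improvement), contributing $q^{(1)}(x,Z,\gamma)(\partial_1 w)$ already of the target form.

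For the second piece I pass to the flat picture, setting $T_0:=\lambda^{m,\gamma}_\chi(D)$ and $v:=w^\sharp$. From \eqref{calcolo3} one has $\sharp\circ\partial_1\circ\sharp^{-1} = e^{-x_1}(\partial_1-\tfrac12)$, and since $T_0$ has an $x$-independent symbol it commutes with $\partial_1$ and with constants, so
\[
\sharp\bigl([\lambda^{m,\gamma}_\chi(Z), \partial_1]w\bigr) \;=\; [T_0, e^{-x_1}](\partial_1-\tfrac12)v.
\]
Using \eqref{calcolo3} and \eqref{calcolo1} again, $(\partial_1-\tfrac12)v = (Z_1 w)^\sharp = (x_1\partial_1 w)^\sharp = e^{x_1}(\partial_1 w)^\sharp$, whence
\[
\sharp\bigl([\lambda^{m,\gamma}_\chi(Z), \partial_1]w\bigr) \;=\; \bigl([T_0,e^{-x_1}]e^{x_1}\bigr)(\partial_1 w)^\sharp \;=\; \bigl(T_0 - e^{-x_1}T_0 e^{x_1}\bigr)(\partial_1 w)^\sharp,
\]
reducing everything to showing $T_0 - e^{-x_1}T_0 e^{x_1} = \mathrm{Op}^\gamma(q^{(2)})$ with $q^{(2)}\in\Gamma^{m-1}$: pulling back through $\sharp^{-1}$ and composing with multiplication by $A_1^{I,I}$ via the conormal product rule then assembles the final $q_m\in\Gamma^{m-1}$.

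For the core identification, note that from $\lambda^{m,\gamma}_\chi = \mathcal{F}^{-1}\chi\ast\lambda^{m,\gamma}$ the convolution kernel of $T_0$ is $K(z)=\chi(z)\mathcal{F}^{-1}\lambda^{m,\gamma}(z)$, so the kernel of $e^{-x_1}T_0 e^{x_1}$ is $e^{-z_1}K(z) = [e^{-z_1}\chi(z)]\mathcal{F}^{-1}\lambda^{m,\gamma}(z)$; taking the Fourier transform (equivalently, shifting the contour in $\xi_1$, legitimate because $\mathcal{F}^{-1}\chi$ is entire of exponential type) identifies its symbol as $(e^{x_1}\chi)(D)\lambda^{m,\gamma}(\xi)$, still in $\Gamma^m$ by the argument of Lemma \ref{lemmatecnico1} with the test function $e^{x_1}\chi\in C^\infty_0(\mathbb R^n)$. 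Subtracting,
\[
T_0 - e^{-x_1}T_0 e^{x_1} \;=\; \mathrm{Op}^\gamma\!\bigl(-[(e^{x_1}-1)\chi](D)\lambda^{m,\gamma}\bigr).
\]
The decisive algebraic step is that $e^{x_1}-1$ vanishes at $x_1=0$: writing $(e^{x_1}-1)\chi = x_1 h$ with $h:=\tfrac{e^{x_1}-1}{x_1}\chi\in C^{\infty}_0(\mathbb R^n)$, the identity $\mathcal{F}^{-1}(x_1 h)(\eta)=-i\partial_{\eta_1}\mathcal{F}^{-1}h(\eta)$ together with a single integration by parts gives
\[
[(e^{x_1}-1)\chi](D)\lambda^{m,\gamma} \;=\; -i\,h(D)\bigl(\partial_{\xi_1}\lambda^{m,\gamma}\bigr).
\]
Since $\partial_{\xi_1}\lambda^{m,\gamma}\in\Gamma^{m-1}$, one more appeal to the argument of Lemma \ref{lemmatecnico1} delivers $q^{(2)}=i\,h(D)(\partial_{\xi_1}\lambda^{m,\gamma})\in\Gamma^{m-1}$. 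I expect the only genuine obstacle to be precisely this symbol identification of $e^{-x_1}T_0 e^{x_1}$ --- through the kernel computation or contour shift --- together with the vanishing of $e^{x_1}-1$ at the origin that produces the $x_1$-factorization responsible for the one-order symbolic gain; everything else reduces to routine bookkeeping within the conormal calculus of Section \ref{conormalcalculus}.
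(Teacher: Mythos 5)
Your proof is correct, and it takes a genuinely different route from the paper's. The paper treats the commutator $[\lambda^{m,\gamma}_{\chi}(Z),A^{I,I}_1\partial_1]$ as a single block: passing to the flat picture via $\sharp$, it has a convolution-type kernel $\mathcal{K}(x,y)=\bigl(A^{I,I,\natural}_1(x-y)-A^{I,I,\natural}_1(x)e^{-y_1}\bigr)\chi(y)$, and the one-order gain comes from $\mathcal{K}(x,0)=0$, which is exploited by a vector Taylor expansion $\mathcal{K}(x,y)=\sum_k b_k(x,y)y_k$ followed by an integration by parts in the frequency variables (Lemma \ref{stimesimbolonormale}). You instead split first by the Leibniz rule, which cleanly decouples the two distinct sources of the gain: the piece $[\lambda^{m,\gamma}_\chi(Z),A^{I,I}_1]\partial_1$ is routine (scalarity of $\lambda^{m,\gamma}_\chi$ plus Proposition \ref{prodottoecommutatore}), while the normal piece $A^{I,I}_1[\lambda^{m,\gamma}_\chi(Z),\partial_1]$ becomes a constant-coefficient problem once you use $\sharp\circ\partial_1\circ\sharp^{-1}=e^{-x_1}(\partial_1-\tfrac12)$ and the fact that $T_0=\lambda^{m,\gamma}_\chi(D)$ commutes with $\partial_1$. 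The vanishing of $e^{x_1}-1$ at $x_1=0$ (yielding the factorization $(e^{x_1}-1)\chi=x_1h$ and hence $q^{(2)}=i\,h(D)\partial_{\xi_1}\lambda^{m,\gamma}\in\Gamma^{m-1}$) is the precise one-variable counterpart of $\mathcal{K}(x,0)=0$; in your split this is a transparent algebraic fact, while in the paper it is buried inside the Taylor argument for the $x$-dependent kernel. What your approach buys is conceptual clarity — one order is gained once from the commutator with the matrix coefficient and once from the exponential conjugation — and it dispenses with Lemma \ref{stimesimbolonormale} in favor of the scalar symbolic calculus and a mild extension of Lemma \ref{lemmatecnico1} (to a compactly supported $h$ that does not satisfy the normalization or evenness in \eqref{chi}; harmless, since only compact support of $h$ and rapid decay of $\mathcal{F}^{-1}h$ enter the proof, but worth stating explicitly). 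What it costs is the preamble of kernel/symbol identifications for $e^{-x_1}T_0e^{x_1}$, which you correctly identify as the only real technical step — and your sign there is right, once one uses that $\chi$ is even.
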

\noindent
{\it Proof}.\,\,The proof follows the same lines of that of \cite[Proposition 4.8]{moseBVP}.
\newline
For given $w\in C^{\infty}_{(0)}(\mathbb{R}^n_+)$, let us explicitly compute $\left([\lambda^{m,\gamma}_{\chi}(Z),A^{I,I}_1\partial_1]w\right)^{\sharp}$; using the identity $(\partial_1w)^{\sharp}=e^{-x_1}(Z_1w)^{\sharp}$ and that $\lambda^{m,\gamma}_{\chi}(Z)$ and $Z_1$ commute, we find for every $x\in\mathbb{R}^n$:
\begin{equation}
\begin{array}{ll}
\displaystyle{\left([\lambda^{m,\gamma}_{\chi}(Z),A^{I,I}_1\partial_1]w\right)^{\sharp}(x)}\\
\displaystyle{=\left(\lambda^{m,\gamma}_\chi(Z)\left(A^{I,I}_1\partial_1w\right)-A^{I,I}_1\partial_1\left(\lambda^{m,\gamma}_\chi(Z)w\right)\right)^{\sharp}(x)}\\
\displaystyle{=\lambda^{m,\gamma}_\chi(D)\left(A^{I,I,\natural}_1\left(\partial_1w\right)^{\sharp}\right)(x)-A^{I,I,\natural}_1(x)\left(\partial_1\left(\lambda^{m,\gamma}_\chi(Z)w\right)\right)^{\sharp}(x)}\\
\displaystyle{=\lambda^{m,\gamma}_{\chi}(D)\left(A^{I,I,\natural}_1 e^{-(\cdot)_1}(Z_1w)^{\sharp}\right)(x)
-A^{I,I,\natural}_1(x)e^{-x_1}\left(Z_1 \lambda^{m,\gamma}_{\chi}(Z)w\right)^{\sharp}(x)}\\
\displaystyle{=\lambda^{m,\gamma}_{\chi}(D)\left(A^{I,I,\natural}_1 e^{-(\cdot)_1}(Z_1w)^{\sharp}\right)(x)
-A^{I,I,\natural}_1(x)e^{-x_1}\left(\lambda^{m,\gamma}_{\chi}(Z)Z_1 w\right)^{\sharp}(x)}\\
\displaystyle{=\lambda^{m,\gamma}_{\chi}(D)\left(A^{I,I,\natural}_1 e^{-(\cdot)_1}(Z_1w)^{\sharp}\right)(x)
-A^{I,I,\natural}_1(x)e^{-x_1}\lambda^{m,\gamma}_{\chi}(D)(Z_1 w)^{\sharp}(x)\,.}
\end{array}
\end{equation}
Observing that $\lambda^{m,\gamma}_{\chi}(D)$ acts on the space $\mathcal{S}(\mathbb{R}^n)$ as the convolution by the inverse Fourier transform of $\lambda^{m,\gamma}_{\chi}$ (see \eqref{coeffcost}), the preceding expression can be equivalently restated as follows:
\begin{equation}\label{identitacommutatore1}
\begin{array}{ll}
\displaystyle{\left([\lambda^{m,\gamma}_{\chi}(Z),A^{I,I}_1\partial_1]w\right)^{\sharp}(x)}\\
\displaystyle{=\mathcal{F}^{-1}\left(\lambda^{m,\gamma}_{\chi}\right)\ast A^{I,I,\natural}_1 e^{-(\cdot)_1}(Z_1w)^{\sharp}(x)
-A^{I,I,\natural}_1(x)e^{-x_1}\mathcal{F}^{-1}\left(\lambda^{m,\gamma}_{\chi}\right)\ast(Z_1w)^{\sharp}}\\
\displaystyle{=\left\langle\mathcal{F}^{-1}\left(\lambda^{m,\gamma}_{\chi}\right)\,,\,A^{I,I,\natural}_1(x-\cdot)e^{-(x_1-(\cdot)_1)}(Z_1w)^{\sharp}(x-\cdot)\right\rangle}\displaystyle{-A^{I,I,\natural}_1(x)e^{-x_1}\langle\mathcal{F}^{-1}\left(\lambda^{m,\gamma}_{\chi}\right)\,,\,(Z_1w)^{\sharp}(x-\cdot)\rangle}\\
\displaystyle{=\left\langle\eta^{m,\gamma}\,,\,\chi(\cdot)A^{I,I,\natural}_1(x-\cdot)e^{-(x_1-(\cdot)_1)}(Z_1w)^{\sharp}(x-\cdot)\right\rangle}\displaystyle{-\left\langle\eta^{m,\gamma}\,,\,\chi(\cdot)A^{I,I,\natural}_1(x)e^{-x_1}(Z_1w)^{\sharp}(x-\cdot)\right\rangle}\\
\displaystyle{=\left\langle\eta^{m,\gamma}\,,\,\chi(\cdot)A^{I,I,\natural}_1(x-\cdot)(\partial_1w)^{\sharp}(x-\cdot)\right\rangle-\left\langle\eta^{m,\gamma}\,,\,\chi(\cdot)A^{I,I,\natural}_1(x)e^{-(\cdot)_1}(\partial_1w)^{\sharp}(x-\cdot)\right\rangle}\\
\displaystyle{=\left\langle\eta^{m,\gamma}\,,\,\chi(\cdot)\left(A^{I,I,\natural}_1(x-\cdot)-A^{I,I,\natural}_1(x)e^{-(\cdot)_1}\right)(\partial_1w)^{\sharp}(x-\cdot)\right\rangle\,,}
\end{array}
\end{equation}
where $\eta^{m,\gamma}:=\mathcal{F}^{-1}\left(\lambda^{m,\gamma}\right)$, and the identity $\mathcal{F}^{-1}\left(\lambda^{m,\gamma}_{\chi}\right)=\chi\eta^{m,\gamma}$ (following at once from \eqref{funzionepesomodificata}) has been used. Just for brevity, let us further set
\begin{equation}\label{nucleo}
\mathcal{K}(x,y):=\left(A^{I,I,\natural}_1(x-y)-A^{I,I,\natural}_1(x)e^{-y_1}\right)\chi(y)\,.
\end{equation}
Thus the identity above reads as
\begin{equation}\label{identitacommutatore2}
\left([\lambda^{m,\gamma}_{\chi}(Z),A^{I,I}_1\partial_1]w\right)^{\sharp}(x)=\left\langle\eta^{m,\gamma}\,,\,\mathcal{K}(x,\cdot)(\partial_1w)^{\sharp}(x-\cdot)\right\rangle\,,
\end{equation}
where the ``kernel'' $\mathcal{K}(x,y)$ is a bounded function in $C^{\infty}(\mathbb{R}^n\times\mathbb{R}^n)$, with bounded derivatives of all orders.
This regularity of $\mathcal{K}$ is due to the presence of the function $\chi$ in formula \eqref{nucleo}; actually the vanishing of $\chi$ at infinity prevents the blow-up of the exponential factor $e^{-y_1}$, as $y_1\to -\infty$. We point out that this is precisely the step of our analysis of the normal commutator, where this function $\chi$ is needed.
\newline
After \eqref{nucleo}, we also have that $\mathcal{K}(x,0)=0$; then, by a Taylor expansion with respect to $y$, we can represent the kernel $\mathcal{K}(x,y)$ as follows
\begin{equation}\label{taylor}
\mathcal{K}(x,y)=\sum\limits_{k=1}^n b_k(x,y)y_k\,,
\end{equation}
where $b_k(x,y)$ are given bounded functions in $C^{\infty}(\mathbb{R}^n\times\mathbb{R}^n)$, with bounded derivatives; it comes from \eqref{nucleo} and \eqref{chi} that $b_k$ can be defined in such a way that for all $x\in\mathbb{R}^n$ there holds
\begin{equation}\label{supportobk}
{\rm supp}\,b_k(x,\cdot)\subseteq\{|y|\le 2\varepsilon_0\}\,.\footnote{This can be made by multiplying $\mathcal{K}(x,y)$ by a suitable cut off function $\varphi=\varphi(y)\in C^\infty_0(\mathbb{R}^n)$ such that $\varphi(y)=1$ for $|y|\le 2\varepsilon_0$. This multiplication does not modify $\mathcal{K}$, since $\mathcal{K}$ is supported on $\{|y|\le\varepsilon_0\}$ with respect to $y$. Thus the equality \eqref{taylor} still holds, where the functions $b_k(x,y)$ are replaced by $b_k(x,y)\varphi(y)$ compactly supported with respect to $y$.}
\end{equation}
Inserting \eqref{taylor} in \eqref{identitacommutatore2} and using standard properties of the Fourier transform we get
\begin{equation}\label{identitacommutatore3}
\begin{array}{ll}
\displaystyle{\left([\lambda^{m,\gamma}_{\chi}(Z),A^{I,I}_1\partial_1]w\right)^{\sharp}(x)=\left\langle\eta^{m,\gamma}\,,\,\sum\limits_{k=1}^n b_k(x,\cdot)(\cdot)_k(\partial_1w)^{\sharp}(x-\cdot)\right\rangle}\\
\displaystyle{=\sum\limits_{k=1}^n\left\langle(\cdot)_k\mathcal{F}^{-1}\left(\lambda^{m,\gamma}\right)\,,\,b_k(x,\cdot)(\partial_1w)^{\sharp}(x-\cdot)\right\rangle}\\
\displaystyle{=i\sum\limits_{k=1}^n\left\langle\mathcal{F}^{-1}\left(\partial_{k}\lambda^{m,\gamma}\right)\,,\,b_k(x,\cdot)(\partial_1w)^{\sharp}(x-\cdot)\right\rangle}\\
\displaystyle{=i\sum\limits_{k=1}^n\left\langle \partial_{k}\lambda^{m,\gamma}\,,\,\mathcal{F}^{-1}\left(b_k(x,\cdot)(\partial_1w)^{\sharp}(x-\cdot)\right)\right\rangle}\\
\displaystyle{=i\sum\limits_{k=1}^n \int_{\mathbb{R}^n}\partial_{k}\lambda^{m,\gamma}(\xi)\mathcal{F}^{-1}\big(b_k(x,\cdot)(\partial_1w)^{\sharp}(x-\cdot)\big)(\xi)d\xi}\\
\displaystyle{=i\sum\limits_{k=1}^n (2\pi)^{-n}\int_{\mathbb{R}^n}\partial_{k}\lambda^{m,\gamma}(\xi)\left(\int_{\mathbb{R}^n}e^{i\xi\cdot y}b_k(x,y)(\partial_1w)^{\sharp}(x-y)dy\right)d\xi\,.}\\
\end{array}
\end{equation}
Note that for $w\in C^{\infty}_{(0)}(\mathbb{R}^n_+)$ and any $x\in\mathbb{R}^n$ the function $b_k(x,\cdot)(\partial_1w)^{\sharp}(x-\cdot)$ belongs to $\mathcal{S}(\mathbb{R}^n)$; hence the last expression in \eqref{identitacommutatore3} makes sense. Henceforth, we replace $(\partial_1w)^{\sharp}$ by any function $v\in\mathcal{S}(\mathbb{R}^n)$. Our next goal is writing the integral  operator
\begin{equation}\label{operatoreintegrale}
(2\pi)^{-n}\int_{\mathbb{R}^n}\partial_{k}\lambda^{m,\gamma}(\xi)\left(\int_{\mathbb{R}^n}e^{i\xi\cdot y}b_k(x,y)v(x-y)dy\right)d\xi
\end{equation}
as a pseudo-differential operator.
\newline
Firstly, we make use of the inversion formula for the Fourier transformation and Fubini's theorem to recast \eqref{operatoreintegrale} as follows:
\begin{equation}\label{operatorebk}
\begin{array}{ll}
\displaystyle{\int_{\mathbb{R}^n}e^{i\xi\cdot y}b_k(x,y)v(x-y)dy}
\displaystyle{=(2\pi)^{-n}\int_{\mathbb{R}^n}e^{i\xi\cdot y}b_k(x,y)\left(\int_{\mathbb{R}^n}e^{i(x-y)\cdot\eta}\widehat{v}(\eta)d\eta\right)dy}\\
\displaystyle{=(2\pi)^{-n}\int_{\mathbb{R}^n}e^{i x\cdot\eta}\left(\int_{\mathbb{R}^n}e^{-iy\cdot(\eta-\xi)}b_k(x,y)dy\right)\widehat{v}(\eta)d\eta}
\displaystyle{=(2\pi)^{-n}\int_{\mathbb{R}^n}e^{i x\cdot\eta}\widehat{b}_k(x,\eta-\xi)\widehat{v}(\eta)d\eta\,;}
\end{array}
\end{equation}
for every index $k$, $\widehat{b}_k(x,\zeta)$ denotes the partial Fourier transform of $b_k(x,y)$ with respect to $y$. Then, inserting \eqref{operatorebk} into \eqref{operatoreintegrale} we obtain
\begin{equation}\label{operatoreintegrale1}
\begin{array}{ll}
\displaystyle{(2\pi)^{-n}\int_{\mathbb{R}^n}\partial_{k}\lambda^{m,\gamma}(\xi)\left(\int_{\mathbb{R}^n}e^{i\xi\cdot y}b_k(x,y)v(x-y)dy\right)d\xi}\\
\displaystyle{=(2\pi)^{-2n}\int_{\mathbb{R}^n}\partial_{k}\lambda^{m,\gamma}(\xi)\left(\int_{\mathbb{R}^n}e^{i x\cdot\eta}\widehat{b}_k(x,\eta-\xi)\widehat{v}(\eta)d\eta\right)d\xi\,.}
\end{array}
\end{equation}
Recall that for each $x\in\mathbb{R}^n$, the function $y\mapsto b_k(x,y)$ belongs to $C^{\infty}_{0}(\mathbb{R}^n)$ (and its compact support does not depend on $x$, see \eqref{supportobk}); thus, for each $x\in\mathbb{R}^n$, $\widehat{b}_k(x,\zeta)$ is rapidly decreasing in $\zeta$.
\newline
Because $\lambda^{m,\gamma}\in\Gamma^m$ and since $\widehat{v}(\eta)$ is also rapidly decreasing, Fubini's theorem can be used to change the order of the integrations within \eqref{operatoreintegrale1}. So we get
\begin{equation}\label{operatoreintegrale2}
\begin{array}{ll}
\displaystyle{(2\pi)^{-2n}\int_{\mathbb{R}^n}\partial_{k}\lambda^{m,\gamma}(\xi)\left(\int_{\mathbb{R}^n}e^{i x\cdot\eta}\widehat{b}_k(x,\eta-\xi)\widehat{v}(\eta)d\eta\right)d\xi}\\
\displaystyle{=(2\pi)^{-2n}\int_{\mathbb{R}^n}e^{i x\cdot\eta}\left(\int_{\mathbb{R}^n}\widehat{b}_k(x,\eta-\xi)\partial_{k}\lambda^{m,\gamma}(\xi)d\xi\right)\widehat{v}(\eta)d\eta}\\
\displaystyle{=(2\pi)^{-n}\int_{\mathbb{R}^n}e^{i x\cdot\eta}q_{k,m}(x,\eta,\gamma)\widehat{v}(\eta)d\eta\,,}
\end{array}
\end{equation}
where we have set
\begin{equation}\label{simbolonormalek}
q_{k,m}(x,\xi,\gamma):=(2\pi)^{-n}\int_{\mathbb{R}^n}\widehat{b}_k(x,\eta)\partial_{k}\lambda^{m,\gamma}(\xi-\eta)d\eta\,.
\end{equation}
Notice that formula \eqref{simbolonormalek} defines $q_{k,m}$ as the convolution of the functions $\widehat{b}_k(x,\cdot)$ and $\partial_{k}\lambda^{m,\gamma}$; hence $q_{k,m}$ is a well defined $C^{\infty}-$function in $\mathbb{R}^n\times\mathbb{R}^n$.
\newline
The proof of Proposition \ref{normale} will be accomplished, once the following Lemma will be proved.
\begin{lemma}\label{stimesimbolonormale}
For every $m\in\mathbb{R}$, $k=1,\dots,n$, $q_{k,m}\in\Gamma^{m-1}$, i.e. for all $\alpha,\beta\in\mathbb{N}^n$ there exists a positive constant $C_{k,m,\alpha,\beta}$, independent of $\gamma$, such that
\begin{equation}\label{stimeqk}
|\partial^{\alpha}_{\xi}\partial^{\beta}_{x}q_{k,m}(x,\xi,\gamma)|\le C_{k,m,\alpha,\beta}\lambda^{m-1-|\alpha|,\gamma}(\xi)\,,\quad\forall\,x,\,\xi\in\mathbb{R}^n\,.
\end{equation}
\end{lemma}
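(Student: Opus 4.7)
The strategy is to differentiate under the integral sign in the defining formula \eqref{simbolonormalek} for $q_{k,m}$ and then estimate the resulting integral factor by factor. The key point is that the two factors in the integrand play complementary roles: the $x$-derivatives land exclusively on $\widehat{b}_k(x,\eta)$ while the $\xi$-derivatives land exclusively on $\partial_k\lambda^{m,\gamma}(\xi-\eta)$. Concretely, I would write
$$
\partial^\alpha_\xi\partial^\beta_x q_{k,m}(x,\xi,\gamma)
=(2\pi)^{-n}\int_{\mathbb{R}^n}\partial^\beta_x\widehat{b}_k(x,\eta)\,\partial^\alpha_\xi\partial_k\lambda^{m,\gamma}(\xi-\eta)\,d\eta
$$
and then bound each factor separately.

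For the second factor, since $\lambda^{m,\gamma}\in\Gamma^m$, standard symbol estimates yield $|\partial^\alpha_\xi\partial_k\lambda^{m,\gamma}(\zeta)|\le C_{\alpha,k,m}\lambda^{m-1-|\alpha|,\gamma}(\zeta)$. For the first factor, I would exploit the crucial structural property \eqref{supportobk}: the $y$-support of $b_k(x,\cdot)$ is contained in $\{|y|\le 2\varepsilon_0\}$ uniformly in $x$, and every $\partial^\beta_x b_k(x,y)$ is $C^\infty$ with bounded derivatives in both variables. Hence for every $\beta$ and every $M\in\mathbb{N}$ there is a constant $C_{\beta,M}$ with
$$
\sup_{x\in\mathbb{R}^n}|\partial^\beta_x\widehat{b}_k(x,\eta)|\le C_{\beta,M}(1+|\eta|)^{-M},\qquad\eta\in\mathbb{R}^n,
$$
because integrations by parts in $y$ are performed against a compactly supported smooth function.

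Combining these two bounds leaves the task of controlling $\lambda^{m-1-|\alpha|,\gamma}(\xi-\eta)$ in terms of $\lambda^{m-1-|\alpha|,\gamma}(\xi)$. Here I would invoke the (parameter dependent) Peetre-type inequality
$$
\lambda^{s,\gamma}(\xi-\eta)\le 2^{|s|/2}\lambda^{s,\gamma}(\xi)(1+|\eta|^2)^{|s|/2},\qquad \xi,\eta\in\mathbb{R}^n,\ \gamma\ge 1,\ s\in\mathbb{R},
$$
which follows from the elementary estimate $\gamma^2+|\xi-\eta|^2\le 2(\gamma^2+|\xi|^2)(1+|\eta|^2)$ (valid for $\gamma\ge 1$) applied to both $s$ and $-s$. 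Applying this with $s=m-1-|\alpha|$ and inserting everything into the integral, I obtain
$$
|\partial^\alpha_\xi\partial^\beta_x q_{k,m}(x,\xi,\gamma)|
\le C\,\lambda^{m-1-|\alpha|,\gamma}(\xi)\int_{\mathbb{R}^n}(1+|\eta|^2)^{|m-1-|\alpha||/2}\,|\partial^\beta_x\widehat{b}_k(x,\eta)|\,d\eta,
$$
and choosing $M$ large enough in the Schwartz bound on $\widehat{b}_k$ makes the $\eta$-integral finite and independent of $x$ and $\gamma$, giving \eqref{stimeqk}.

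The only genuine subtlety I expect to encounter is the uniformity in $\gamma$: one must use the parameter dependent Peetre inequality rather than the classical one, and one must verify that the Schwartz tail estimates on $\widehat{b}_k(x,\cdot)$ do not involve $\gamma$ (they do not, since $b_k$ is $\gamma$-independent). Once these two points are handled, the rest of the argument is a bookkeeping exercise of differentiating under the integral, justified by the rapid decay of $\widehat{b}_k$ in $\eta$ and the polynomial growth of derivatives of $\lambda^{m,\gamma}$, both of which ensure that Fubini's theorem and dominated convergence apply unconditionally.
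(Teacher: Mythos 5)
Your proof is correct and follows essentially the same route as the paper: differentiate under the integral so that all $x$-derivatives fall on $\widehat b_k$ and all $\xi$-derivatives on $\partial_k\lambda^{m,\gamma}$, use the rapid decay of $\widehat b_k(x,\cdot)$ (uniform in $x$ and, crucially, $\gamma$-free since $b_k$ does not depend on $\gamma$), control $\lambda^{m-1-|\alpha|,\gamma}(\xi-\eta)$ by $\lambda^{m-1-|\alpha|,\gamma}(\xi)$ via the parameter-dependent Peetre inequality, and take the decay order large enough to make the $\eta$-integral converge. The paper merely packages the Schwartz bound on $\partial^\beta_x\widehat b_k(x,\cdot)$ as a separate small lemma proved by integration by parts, and cites the Peetre inequality in the equivalent form $\lambda^{s,\gamma}(\xi)\le 2^{|s|}\lambda^{s,\gamma}(\xi-\eta)\lambda^{|s|}(\eta)$; otherwise the argument is the same.
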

\noindent
The proof of Lemma \ref{stimesimbolonormale} is postponed to Appendix \ref{appA}.
\newline
Now, we continue the proof of Proposition \ref{normale}

\vspace{.2cm}
\noindent
{\it End of the proof of Proposition \ref{normale}.}\,\,The last row of \eqref{operatoreintegrale2} provides the desired representation of \eqref{operatoreintegrale} as a pseudo-differential operator; actually it gives the identity
\begin{equation*}
(2\pi)^{-n}\int_{\mathbb{R}^n}\!\! \partial_{k}\lambda^{m,\gamma}(\xi)\left(\int_{\mathbb{R}^n}\!\! e^{i\xi\cdot y}b_k(x,y)v(x-y)dy\right)d\xi={\rm Op}^{\gamma}(q_{k,m})v(x)\,,
\end{equation*}
for every $v\in\mathcal{S}(\mathbb{R}^n)$.
\newline
Inserting the above formula (with $v=(\partial_1 w)^{\sharp}$) into \eqref{identitacommutatore3} finally gives
\begin{equation}\label{rappresentazionefinale}
\begin{array}{ll}
\left([\lambda^{m,\gamma}_{\chi}(Z),A^{I,I}_1\partial_1]w\right)^{\sharp}(x)={\rm Op}^{\gamma}(q_{m})(\partial_1 w)^{\sharp}(x)\,,
\end{array}
\end{equation}
where $q_{m}=q_{m}(x,\xi,\gamma)$ is the symbol in $\Gamma^{m-1}$ defined by
\begin{equation}\label{simbolonormalefinale}
q_{m}(x,\xi,\gamma):=i\sum\limits_{k=1}^n q_{k,m}(x,\xi,\gamma)\,.
\end{equation}
Of course, formula \eqref{identitacommutatore} is equivalent to \eqref{rappresentazionefinale}, in view of \eqref{operatoreconormale}. This ends the proof of Proposition \ref{normale}.\qed

\vspace{.2cm}
\noindent
We come back to the analysis of the normal commutator term $[\lambda^{-1,\gamma}_{\chi}(Z),A^{I,I}_1\partial_1]u^{I}$. To estimate it, we apply the result of Proposition \ref{normale} for $m=-1$ and $w=u^I$. Then we find the representation formula
\begin{equation}\label{forma1commutatore}
[\lambda^{-1,\gamma}_{\chi}(Z),A^{I,I}_1\partial_1]u^I=q_{-1}(x,Z,\gamma)(\partial_1 u^I)\,,
\end{equation}
where the symbol $q_{-1}\in\Gamma^{-2}$ is defined by \eqref{simbolonormalefinale}. Since $A^{I,I}_1$ is invertible, from \eqref{tf}, $\partial_1u^I$ can be represented in terms of tangential derivatives of $u$ and $F$, as follows
\begin{equation}\label{partial_1}
\partial_1u^I=(A^{I,I}_1)^{-1}F^I+\mathcal{T}_\gamma u\,,
\end{equation}
where $\mathcal{T}_\gamma=\mathcal{T}_\gamma(x,Z)$ denotes the tangential partial differential operator
\begin{equation}\label{normtan}
\mathcal{T}_\gamma u:=-(A^{I,I}_1)^{-1}\left[\gamma u^I+H_1Z_1u^{II}+\left(\sum\limits_{j=2}^n A_jZ_ju+Bu+\rho_\sharp u\right)^{I}\right]
\end{equation}
and we have set $H_1:=x_1^{-1}A^{I,II}_1$ (recall that $H_1\in C^{\infty}_{(0)}(\mathbb{R}^n_+)$ since $A^{I,II}_{1\,|\,x_1=0}=0$). Inserting \eqref{partial_1} into \eqref{forma1commutatore} leads to
\begin{equation}\label{ordinezero1}
[\lambda^{-1,\gamma}_{\chi}(Z),A^{I,I}_1\partial_1]u^{I}=q_{-1}(x,Z,\gamma)((A^{I,I}_1)^{-1}F^I)+q_{-1}(x,Z,\gamma)\mathcal{T}_\gamma u\,.
\end{equation}
The first term in the right-hand side of \eqref{ordinezero1} is moved to the right-hand side of equation \eqref{sistemaregolarizzato} and incorporated into the source term. As for the second term $q_{-1}(x,Z,\gamma)\mathcal{T}_\gamma u$, a similar analysis to the one performed about the tangential commutator term in the right-hand side of \eqref{decomposizione} can  be applied to rewrite it as the sum of a lower order operator acting on $\lambda^{-1,\gamma}_\chi(Z)u$ and some smoothing reminder. More precisely, applying again the identities \eqref{identita1} and \eqref{splitting} we get
\begin{equation}\label{splitting2}
q_{-1}(x,Z,\gamma)\mathcal{T}_\gamma u=q_{-1}(x,Z,\gamma)\mathcal{T}_\gamma\lambda^{1,\gamma}(Z)\left(\lambda^{-1,\gamma}_\chi(Z)u\right)+q_{-1}(x,Z,\gamma)\mathcal{T}_\gamma\lambda^{1,\gamma}(Z)r_{-1}(Z,\gamma)u\,.
\end{equation}
Combining \eqref{ordinezero1}, \eqref{splitting2} and \eqref{componenti} we decompose the normal commutator term in \eqref{decomposizione} as the sum of the following contributions
\begin{equation}\label{splitting3}
[\lambda^{-1,\gamma}_{\chi}(Z),A_1^1\partial_1]u=\left(\begin{array}{cc}q_{-1}(x,Z,\gamma)((A^{I,I}_1)^{-1}F^I)\\ 0\end{array}\right)+\rho_{0, nor}(x,Z,\gamma)(\lambda^{-1,\gamma}_{\chi}(Z)u)+S_{-1}(x,Z,\gamma)u\,.
\end{equation}
In the representation provided by \eqref{splitting3}, the conormal operator
\begin{equation}\label{rho0nor}
\rho_{0, nor}(x, Z,\gamma):=\left(\begin{array}{cc}q_{-1}(x,Z,\gamma)\mathcal{T}_\gamma\lambda^{1,\gamma}(Z)\\ 0\end{array}\right)
\end{equation}
has symbol in $\Gamma^0$(in view of Proposition \ref{prodottoecommutatore}), and hence it must be treated as an additional lower order operator, besides $\rho_\sharp$ and $\rho_{0, tan}$, within the equation \eqref{sistemaregolarizzato} (see \eqref{tilderho}); on the other hand
\begin{equation}\label{S-1}
S_{-1}(x,Z,\gamma)u:=\left(\begin{array}{cc}q_{-1}(x,Z,\gamma)\mathcal{T}_\gamma\lambda^{1,\gamma}(Z)r_{-1}(Z,\gamma)u\\ 0\end{array}\right)
\end{equation}
can be regarded as a smoothing reminder and then moved to the right-hand side of the equation \eqref{sistemaregolarizzato} to be treated as a part of the source term, in view of Lemma \ref{lemmatecnico2} (see Section \ref{sectintcomm}).
\subsubsection{The boundary condition}\label{bdrysect}
We are going to write a boundary condition to be coupled to \eqref{sistemaregolarizzato}.
\newline
Firstly we notice that, by Proposition \ref{bordo} for $m=-1$:
\begin{equation}\label{bordo-1}
(\lambda^{-1,\gamma}_{\chi}(Z)u)_{|\,x_1=0}=b'_{-1}(D',\gamma)(u_{|\,x_1=0})\,,
\end{equation}
where the symbol $b'_{-1}\in\Gamma^{-1}$ on $\mathbb{R}^{n-1}$ is defined by \eqref{simbolobordobis}. Then we apply the operator $b'_{-1}=b'_{-1}(D',\gamma)$ to \eqref{tb} and we obtain
\begin{equation}\label{bordoregolarizzato}
\begin{array}{ll}
b_\gamma(b'_{-1}\psi)+\mathcal M^s_\gamma\left(b'_{-1}u^{I,s}_{|\,x_1=0}\right)+M^I\left(b'_{-1}u^{I}_{|\,x_1=0}\right)+b_{\sharp}(b'_{-1}\psi)+\ell_\sharp\left(b'_{-1}u^{I,s}_{|\,x_1=0}\right)\\
\\
\,\,+[b'_{-1},b_\gamma]\psi+[b'_{-1},b_{\sharp}]\psi+[b'_{-1},\mathcal M^s_\gamma](u^{I,s}_{|\,x_1=0})+[b'_{-1},\ell_\sharp](u^{I,s}_{|\,x_1=0})+[b'_{-1},M^I](u^I_{|\,x_1=0})=b'_{-1}g\,,\,\,\text{on}\,\,\mathbb{R}^{n-1}\,.
\end{array}
\end{equation}
where, for simplicity, we have dropped the explicit dependence on $x'$, $D'$ and $\gamma$ in the operators.
We observe that, in view of the symbolic calculus (see Proposition \ref{prodottoecommutatore}), the commutators appearing above are all pseudo-differential operators on $\mathbb{R}^{n-1}$; more precisely, since $b'_{-1}(\xi',\gamma)$ is a scalar symbol we have that
\begin{equation}\label{commutatori-2}
\begin{array}{ll}
\lbrack b'_{-1},b_{\sharp}\rbrack =\lbrack b'_{-1}(D',\gamma),b_{\sharp}(x',D',\gamma)\rbrack \\
\\
\lbrack b'_{-1},\ell_{\sharp}\rbrack =\lbrack b'_{-1}(D',\gamma),\ell_{\sharp}(x',D',\gamma)\rbrack\\
\\
\lbrack b'_{-1},M^I\rbrack =\lbrack b'_{-1}(D',\gamma),M^I\rbrack
\end{array}
\end{equation}
are operators with symbol in $\Gamma^{-2}$, while
\begin{equation}\label{commutatori-1}
\begin{array}{ll}
\lbrack b'_{-1},b_{\gamma}\rbrack =\lbrack b'_{-1}(D',\gamma),b_{\gamma}(x',D')\rbrack \\
\\
\lbrack b'_{-1},\mathcal M^s_\gamma\rbrack =\lbrack b'_{-1}(D',\gamma),\mathcal M^s_\gamma(x',D')\rbrack
\end{array}
\end{equation}
are operators with symbol in $\Gamma^{-1}$.
\newline
Since the a priori estimate in assumption $(H)_1$ displays a loss of regularity from the boundary data, the above operators must be treated in two different ways. The commutators in \eqref{commutatori-2} can be moved to the right-hand side and treated as additional forcing terms. On the contrary, the commutators in \eqref{commutatori-1} cannot be regarded as a part of the source term in the equation \eqref{bordoregolarizzato} without loosing derivatives on the unknowns $u$ and $\psi$. These operators require a more careful analysis that essentially relies on similar arguments to those used to study the commutator term appearing in the interior equation \eqref{sistemaregolarizzato} (see Sections \ref{commutatoretangenziale}, \ref{commutatorenormale}).
\newline
We use Lemma \ref{lemmatecnico5} and the identity $\lambda^{1,\gamma}(D')\lambda^{-1,\gamma}(D')=Id$ to write
\begin{equation}\label{bordo0}
\begin{array}{ll}
\displaystyle{[b'_{-1}(D',\gamma),b_{\gamma}]\psi=[b'_{-1}(D',\gamma),b_{\gamma}]\lambda^{1,\gamma}(D')\lambda^{-1,\gamma}(D')\psi}\\\\
\displaystyle{=[b'_{-1}(D',\gamma),b_{\gamma}]\lambda^{1,\gamma}(D')(b'_{-1}(D',\gamma)-\beta_{-1}(D',\gamma))\psi}\\\\
\displaystyle{=\bigg([b'_{-1}(D',\gamma),b_{\gamma}]\lambda^{1,\gamma}(D')\bigg)(b'_{-1}(D',\gamma)\psi)-\bigg([b'_{-1}(D',\gamma),b_{\gamma}]\lambda^{1,\gamma}(D')\bigg)\beta_{-1}(D',\gamma)\psi}\\\\
\displaystyle{=d_0(x', D',\gamma)(b'_{-1}(D',\gamma)\psi)+d_{-3}(x', D',\gamma)\psi}\,,
\end{array}
\end{equation}
where
\begin{equation}\label{d0}
d_0(x', D',\gamma):=[b'_{-1}(D',\gamma),b_{\gamma}]\lambda^{1,\gamma}(D')
\end{equation}
has symbol in $\Gamma^0$ and
\begin{equation}\label{d-3}
d_{-3}(x', D',\gamma):==-[b'_{-1}(D',\gamma),b_{\gamma}]\lambda^{1,\gamma}(D')\beta_{-1}(D',\gamma)
\end{equation}
has symbol in $\Gamma^{-3}$, since $\beta_{-1}(\xi',\gamma)\in\Gamma^{-3}$.
\newline
Analogously, we can treat the term in $u$ involving the commutator $[b'_{-1}, \mathcal M^s_\gamma]$, namely we find:
\begin{equation}\label{bordoMs}
\begin{array}{ll}
\displaystyle{[b'_{-1}(D',\gamma),\mathcal M^s_{\gamma}]u^{I,s}_{|\,x_1=0}}
\displaystyle{=e_0(x', D',\gamma)\left(b'_{-1}(D',\gamma)u^{I,s}_{|\,x_1=0}\right)+e_{-3}(x', D',\gamma)u^{I,s}_{|\,x_1=0}}\,,
\end{array}
\end{equation}
where
\begin{equation}\label{e0}
e_0(x',D',\gamma):=[b'_{-1}(D',\gamma),\mathcal M^s_{\gamma}]\lambda^{1,\gamma}(D')
\end{equation}
has symbol in $\Gamma^0$ and
\begin{equation}\label{e-3}
e_{-3}(x',D',\gamma):=-[b'_{-1}(D',\gamma),\mathcal M^s_{\gamma}]\lambda^{1,\gamma}(D')\beta_{-1}(D',\gamma)
\end{equation}
has symbol in $\Gamma^{-3}$.
\newline
Thanks to the stability of the estimate \eqref{h1h2estimate} with respect to zero-th order terms in $\psi$ and $u^{I,s}$, the operators $d_0(x',D',\gamma)$ and $e_0(x',D',\gamma)$ in the representations \eqref{bordo0}, \eqref{bordoMs} can be just regarded as an additional lower order terms in $b'_{-1}(D',\gamma)\psi$ and $b'_{-1}(D',\gamma)u^{I,s}_{|\,x_1=0}$, together with $b_{\sharp}(x',D',\gamma)(b'_{-1}(D',\gamma)\psi)$, $\ell_{\sharp}(x',D',\gamma)\left(b'_{-1}(D',\gamma)u^{I,s}_{|\,x_1=0}\right)$ in the equation \eqref{bordoregolarizzato} (see formulas \eqref{tildeb}, \eqref{tildel} below). The terms involving $d_{-3}(x',D',\gamma)$, $e_{-3}(x',D',\gamma)$ can be just moved to the right-hand side of \eqref{bordoregolarizzato} and absorbed into the boundary datum (see \eqref{Gcal}).
\begin{remark}\label{rmktraccia}
Let us notice that in view of Proposition \ref{bordo} (and using that the operator $\lambda^{-1,\gamma}_\chi(Z)$ acts component-wise on functions) the following identities hold
\begin{equation}\label{traccia}
b'_{-1}(D',\gamma)\left(u^{I}_{|\,x_1=0}\right)=\left(\lambda^{-1,\gamma}_\chi(Z)u^{I}\right)_{|\,x_1=0}=\left(\lambda^{-1,\gamma}_\chi(Z)u\right)^{I}_{|\,x_1=0}
\end{equation}
and similarly for $u^{I,s}$.
\end{remark}
\subsubsection{Final form of the regularized BVP}
Summarizing the calculations performed in the previous Section \ref{sec_regBVP} and in view of Remark \ref{rmktraccia}, the functions $(\lambda^{-1,\gamma}_{\chi}(Z)u, b'_{-1}(D',\gamma)\psi)$ satisfy the system
\begin{equation}\label{fsystem}
\begin{cases}
\mathcal L_\gamma(\lambda^{-1,\gamma}_{\chi}(Z)u)+\tilde\rho(x,Z,\gamma)(\lambda^{-1,\gamma}_\chi(Z)u)=\mathcal{F}\,\,\quad{\rm in}\,\,\mathbb{R}^{n}_+\,\\
\\
\begin{array}{ll}
\displaystyle b_\gamma(b'_{-1}(D',\gamma)\psi)+\mathcal M^s_\gamma\left(\lambda^{-1,\gamma}_\chi(Z)u\right)^{I,s}_{|\,x_1=0}+M^I\left(\lambda^{-1,\gamma}_\chi(Z)u\right)^{I}_{|\,x_1=0}\\
\displaystyle\qquad +\tilde{b}(x',D',\gamma)(b'_{-1}(D',\gamma)\psi)+\tilde{\ell}(x',D',\gamma)\left(\lambda^{-1,\gamma}_\chi(Z)u\right)^{I,s}_{|\,x_1=0}=\mathcal{G}  \,\quad{\rm on}\,\,\mathbb{R}^{n-1}\,,
\end{array}
\end{cases}
\end{equation}
where
\begin{equation}\label{tilderho}
\tilde\rho(x,Z,\gamma):=\rho_\sharp(x,Z,\gamma)+\rho_{0, tan}(x, Z,\gamma)u+\rho_{0, nor}(x,Z,\gamma)u\,,
\end{equation}
\begin{equation}\label{tildeb}
\tilde b(x',D',\gamma):=b_{\sharp}(x',D',\gamma)+d_0(x',D',\gamma)\,,
\end{equation}
\begin{equation}\label{tildel}
\tilde \ell(x',D',\gamma):=\ell_{\sharp}(x',D',\gamma)+e_0(x',D',\gamma)\,,
\end{equation}
\begin{equation}\label{Fcal}
\begin{array}{ll}
\displaystyle\mathcal{F}:= \lambda^{-1,\gamma}_{\chi}(Z)F-\left(\begin{array}{cc}q_{-1}(x,Z,\gamma)((A^{I,I}_1)^{-1}F^I)\\ 0\end{array}\right)-R_{-1}(x,Z,\gamma)u-S_{-1}(x,Z,\gamma)u\,,
\end{array}
\end{equation}
\begin{equation}\label{Gcal}
\begin{array}{ll}
\displaystyle\mathcal{G}:=b'_{-1}(D',\gamma)g-[b'_{-1}(D',\gamma),b_{\sharp}(x',D',\gamma)]\psi-d_{-3}(x',D',\gamma)\psi\\
\\
\displaystyle\qquad -[b'_{-1}(D',\gamma),M^I](u^I_{|\,x_1=0})-[b'_{-1}(D',\gamma),\ell_\sharp(x',D',\gamma)](u^{I,s}_{|\,x_1=0})-e_{-3}(x',D',\gamma)u^{I,s}_{|\,x_1=0}\,,
\end{array}
\end{equation}
and the operators $\rho_{0, tan}$, $\rho_{0, nor}$, $d_0$, $e_0$, $R_{-1}$, $S_{-1}$, $d_{-3}$, $e_{-3}$ are defined in the preceding Sections \ref{interioreqt}-\ref{bdrysect}.
\subsection{The estimate associated to the regularized BVP}
From assumption $(H)_1$, we know that there exist constants $C_0>0$, $\gamma_0\ge 1$, depending only on the coefficients of the operator $\mathcal L_\gamma$ and a finite number of semi-norms of $\tilde\rho=\tilde\rho(x,\xi,\gamma)\in\Gamma^0$, $\tilde\ell=\tilde\ell(x',\xi',\gamma), \tilde b=\tilde b(x',\xi',\gamma)\in\Gamma^0$, such that for all $\gamma\ge\gamma_0$ the following estimate holds for the functions $(\lambda^{-1,\gamma}_{\chi}(Z)u, b'_{-1}(D',\gamma)\psi)$
\begin{equation}\label{lambdaestimate}
    \begin{array}{ll}
    \displaystyle{\gamma\left(||\lambda^{-1,\gamma}_\chi(Z)u||^2_{H^1_{ tan\,,\gamma}(\mathbb{R}^n_+)}+||(\lambda^{-1,\gamma}_\chi(Z)u^I)_{|\,x_1=0}||^2_{H^{1/2}_\gamma(\mathbb{R}^{n-1})}\right)+\gamma^2||b'_{-1}(D',\gamma)\psi||^2_{H^1_\gamma(\mathbb{R}^{n-1})}}
    \\\\
    \quad\quad\displaystyle{\le C_0\left(\frac1{\gamma^3}||\mathcal{F}||^2_{H^{2}_{tan,\gamma}(\mathbb{R}^n_+)}+\frac1{\gamma}||\mathcal{G}||^2_{H^{3/2}_{\gamma}(\mathbb{R}^{n-1})}\right)\,.}
    \end{array}
    \end{equation}
We start analyzing the terms appearing in the left-hand side of \eqref{lambdaestimate}.
\newline
In view of \eqref{normalambda}, \eqref{splitting} we compute
\begin{equation*}
\begin{array}{ll}
\displaystyle{||\lambda^{-1,\gamma}_\chi(Z)u||_{H^1_{ tan\,,\gamma}(\mathbb{R}^n_+)}=||\lambda^{1,\gamma}(Z)\,\lambda^{-1,\gamma}_\chi(Z)u ||_{L^2(\mathbb{R}^n_+)}= \left|\left|\lambda^{1,\gamma}(Z)\bigg(\,\lambda^{-1,\gamma}(Z) - r_{-1}(Z,\gamma)\bigg)u \right|\right|_{L^2(\mathbb{R}^n_+)}}\\
\\
= \left|\left|u  - \lambda^{1,\gamma}(Z)\, r_{-1}(Z,\gamma)u \right|\right|_{L^2(\mathbb{R}^n_+)}\geq ||u||_{L^2(\mathbb{R}^n_+)} - || \lambda^{1,\gamma}(Z)\,r_{-1}(Z,\gamma)u ||_{L^2(\mathbb{R}^n_+)}\\
\\ =  ||u||_{L^2(\mathbb{R}^n_+)} - || r_{-1}(Z,\gamma)u ||_{H^1_{ tan\,,\gamma}(\mathbb{R}^n_+)}.
\end{array}
\end{equation*}
Using Lemma \ref{lemmatecnico2} with $h=1$, there exists a constant $C_{1}$, independent on $\gamma$, such that
$$
|| r_{-1}(Z,\gamma)u ||_{H^1_{ tan\,,\gamma}(\mathbb{R}^n_+)}\leq\frac{C_{1}}{\gamma}||u||_{L^2(\mathbb{R}^n_+)}\,, \quad \forall\gamma\geq 1.
$$
Hence
\begin{equation}\label{l2lambda}
 ||\lambda^{-1,\gamma}_\chi(Z)u||_{H^1_{ tan\,,\gamma}(\mathbb{R}^n_+)}\geq  ||u||_{L^2(\mathbb{R}^n_+)} -\frac{C_{1}}{\gamma}||u||_{L^2(\mathbb{R}^n_+)}\geq \frac1{2} ||u||_{L^2(\mathbb{R}^n_+)}\,, \quad \forall\gamma\geq \gamma_1
\end{equation}
with large enough $\gamma_1\geq 1$.
\newline
Using  Proposition \ref{bordo} and  Lemma \ref{lemmatecnico5} we get
$$
(\lambda^{-1,\gamma}_\chi(Z)u^I)_{|\,x_1=0}=b'_{-1}(D', \gamma)(u^I_{|\, x_1=0})= \lambda^{-1,\gamma}(D')(u^I_{|\, x_1=0}) + \beta_{-1}(D',\gamma)(u^I_{|\, x_1=0}).
$$
Again by Lemma \ref{lemmatecnico5} we derive that $\beta_{-1}(\xi',\gamma)\in \Gamma^{-3}$, hence by Proposition \ref{continuitasobolev} and \eqref{gammaimbedding},  we get
\begin{equation}\label{l2traccia}
\begin{array}{ll}
||(\lambda^{-1,\gamma}_\chi(Z)u^I)_{|\,x_1=0}||_{H^{1/2}_\gamma(\mathbb{R}^{n-1})}=
|| \lambda^{-1,\gamma}(D')(u^I_{|\, x_1=0}) + \beta_{-1}(D',\gamma)(u^I_{|\, x_1=0})||_{H^{1/2}_\gamma(\mathbb{R}^{n-1}) }\\
\\\geq ||\lambda^{1/2,\gamma}(D')  \lambda^{-1,\gamma}(D')(u^I_{|\, x_1=0})||_{L^2(\mathbb{R}^n_+)} - || \beta_{-1}(D',\gamma)(u^I_{|\, x_1=0})||_{H^{1/2}_\gamma(\mathbb{R}^{n-1})}\\
\\ \geq || u^I_{|\, x_1=0}||_{H^{-1/2}_\gamma(\mathbb{R}^{n-1})} - C|| u^I_{|\, x_1=0}||_{H^{-5/2}_\gamma(\mathbb{R}^{n-1})}\\
\\ \displaystyle{\geq \left(1-\frac{C}{\gamma^2}\right)|| u^I_{|\, x_1=0}||_{H^{-1/2}_\gamma(\mathbb{R}^{n-1})}\geq \frac1{2}|| u^I_{|\, x_1=0}||_{H^{-1/2}_\gamma(\mathbb{R}^{n-1})}, \quad \forall \gamma\geq \gamma_1}
\end{array}
\end{equation}
with large enough $\gamma_1\geq 1$, and $C$ a  positive constant independent of $\gamma$.
As regards to the term $||b'_{-1}(D',\gamma)\psi||^2_{H^1_\gamma(\mathbb{R}^{n-1})}$ in \eqref{lambdaestimate} we write again, by Lemma \ref{lemmatecnico5},
$$
b'_{-1}(D', \gamma)\psi= \lambda^{-1,\gamma}(D')\psi+\beta_{-1}(D',\gamma)\psi\,.
$$
Arguing as above we obtain
\begin{equation}\label{l2psi}
\begin{array}{ll}
||b'_{-1}(D', \gamma)\psi||_{H^1_\gamma(\mathbb{R}^{n-1})}\geq  ||\lambda^{-1,\gamma}(D')\psi||_{H^1_\gamma(\mathbb{R}^{n-1})}  -||\beta_{-1}(D',\gamma)\psi||_{H^1_\gamma(\mathbb{R}^{n-1})}\\
\\
\displaystyle{\geq ||\psi||_{L^2(\mathbb{R}^{n-1})} -C ||\psi||_{H^{-2}_\gamma(\mathbb{R}^{n-1})}\geq \left(1-\frac{C}{\gamma^2}\right) ||\psi||_{L^2(\mathbb{R}^{n-1})} \geq \frac{1}{2}
||\psi||_{L^2(\mathbb{R}^{n-1})}, \quad \forall \gamma\geq \gamma_1}
\end{array}
\end{equation}
with $\gamma_1\geq 1$ large enough, and $C$ a  positive constant independent on $\gamma$.
\newline
To conclude the estimate, we need to analyze the different commutator terms involved in the data $\mathcal{F}, \mathcal{G}$ in  right-hand side of \eqref{lambdaestimate}. The next two sections are devoted to the study of these commutator terms.
\subsubsection{The estimate of the internal source term $\mathcal F$}\label{sectintcomm}
To provide an estimate of the $H^2_{tan}-$norm of the source term $\mathcal F$ in the internal equation of the BVP \eqref{fsystem}, we need to estimate in $H^2_{tan}(\mathbb R^n_+)$ the different terms involving $F$ and the function $u$ in the right-hand side of \eqref{Fcal}.
\newline
Concerning the terms in the right-hand side of \eqref{Fcal} containing the function $u$, from Lemma \ref{lemmatecnico2} and the fact that the operators $[\lambda^{-1,\gamma}_{\chi}(Z),\mathcal L_{tan, \gamma}+\rho_\sharp]\lambda^{1,\gamma}(Z)$ and $q_{-1}(x,Z,\gamma)\mathcal{T}_\gamma\lambda^{1,\gamma}(Z)$ involved in the definition of $R_{-1}$, $S_{-1}$ are of order zero (see \eqref{R-1}, \eqref{S-1}), we get
\begin{equation}\label{stimeR1S1}
\begin{array}{ll}
\displaystyle||R_{-1}u||_{H^2_{tan, \gamma}(\mathbb R^n_+)}\le C||r_{-1}(Z,\gamma)u||_{H^2_{tan,\gamma}(\mathbb R^n_+)}\le C_1||u||_{L^2(\mathbb R^n_+)}\,,\\
\\
\displaystyle||S_{-1}u||_{H^2_{tan, \gamma}(\mathbb R^n_+)}\le C||r_{-1}(Z,\gamma)u||_{H^2_{tan,\gamma}(\mathbb R^n_+)}\le C_1||u||_{L^2(\mathbb R^n_+)}\,,
\end{array}
\end{equation}
for suitable positive constants $C, C_1$ independent of $\gamma\ge 1$.
\newline
As regards to the terms in the right-hand side of \eqref{Fcal} that contain the function $F$, since the operator $q_{-1}(x,Z,\gamma)$ has symbol in $\Gamma^{-2}$ (cf. Proposition \ref{normale}), we immediately find that
\begin{equation}\label{stimeFprimi}
\begin{array}{ll}
\displaystyle||\lambda^{-1,\gamma}_\chi(Z) F||_{H^2_{tan, \gamma}(\mathbb R^n_+)}\le C||F||_{H^1_{tan,\gamma}(\mathbb R^n_+)}\,,\\
\\
\displaystyle||q_{-1}(x,Z,\gamma)((A_1^{I, I})^{-1}F^{I})||_{H^2_{tan, \gamma}(\mathbb R^n_+)}\le C||F^I||_{L^2(\mathbb R^n_+)}\le\frac{C}{\gamma}||F^I||_{H^1_{tan, \gamma}(\mathbb R^n_+)}\,,
\end{array}
\end{equation}
for a suitable positive $C$, independent of $\gamma$.
\newline
Collecting estimates \eqref{stimeR1S1}, \eqref{stimeFprimi} we obtain
\begin{equation}\label{stimaFcal}
||\mathcal F||_{H^2_{tan, \gamma}(\mathbb R^n_+)}\le C\left\{||F||_{H^1_{tan, \gamma}(\mathbb R^n_+)}+||u||_{L^2(\mathbb R^n_+))}\right\}\,,
\end{equation}
where again $C$ is some positive constant independent of $\gamma$.
\subsubsection{The estimate of the boundary data $\mathcal G$}\label{commbordo}
In this section we provide an estimate of the $H^{3/2}-$norm of the boundary data $\mathcal G$ in the right-hand side of $\eqref{fsystem}_2$, as it is required by the estimate \eqref{lambdaestimate}; in particular, we need to consider the commutator terms involved in \eqref{Gcal}.
\newline
From Section \ref{bdrysect} we know that the commutators in \eqref{commutatori-2} are pseudo-differential with symbols in $\Gamma^{-2}$. Hence from Proposition \ref{continuitasobolev}, there exists a constant $C>0$ such that, $\forall\,\gamma\ge 1$,
\begin{equation}\label{commbordo1}
\begin{array}{ll}
\displaystyle{||[b'_{-1}(D',\gamma),b_{\sharp}(x',D',\gamma)]\psi||_{H^{3/2}_{\gamma}(\mathbb{R}^{n-1})}\le C||\psi||_{H^{-1/2}_{\gamma}(\mathbb{R}^{n-1})}\le\frac{C}{\gamma^{1/2}}||\psi||_{L^2(\mathbb{R}^{n-1})}\,,}\\\\
\displaystyle{||[b'_{-1}(D',\gamma),M^I]u^I_{|\,x_1=0}||_{H^{3/2}_{\gamma}(\mathbb{R}^{n-1})}\le C||u^I_{|\,x_1=0}||_{H^{-1/2}_{\gamma}(\mathbb{R}^{n-1})}\,,}\\\\
\displaystyle{||[b'_{-1}(D',\gamma),\ell_{\sharp}(x',D',\gamma)]u^{I,s}_{|\,x_1=0}||_{H^{3/2}_{\gamma}(\mathbb{R}^{n-1})}\le C||u^{I,s}_{|\,x_1=0}||_{H^{-1/2}_{\gamma}(\mathbb{R}^{n-1})}\,.}
\end{array}
\end{equation}
Finally, since $d_{-3}(x',D',\gamma)$ and $e_{-3}(x', D', \gamma)$ have symbol in $\Gamma^{-3}$ (see \eqref{d-3} and \eqref{e-3}) we obtain
\begin{equation}\label{commbordo2}
||d_{-3}(x', D',\gamma)\psi||_{H^{3/2}_\gamma(\mathbb{R}^{n-1})}\le C||\psi||_{H^{-3/2}_{\gamma}(\mathbb{R}^{n-1})}\le \frac{C}{\gamma^{3/2}}||\psi||_{L^2(\mathbb{R}^{n-1})}\,,\quad\forall\,\gamma\ge 1\,,
\end{equation}
\begin{equation}\label{commbordo2}
||e_{-3}(x', D',\gamma)u^{I,s}_{|\, x_1=0}||_{H^{3/2}_\gamma(\mathbb{R}^{n-1})}\le C||u^{I,s}_{|\, x_1=0}||_{H^{-3/2}_{\gamma}(\mathbb{R}^{n-1})}\le \frac{C}{\gamma}||u^{I,s}_{|\, x_1=0}||_{H^{-1/2}(\mathbb{R}^{n-1})}\,,\quad\forall\,\gamma\ge 1\,,
\end{equation}
with $\gamma-$independent positive constant $C$.
Collecting the preceding estimates \eqref{commbordo1}, \eqref{commbordo2} and using \eqref{Gcal} we obtain
\begin{equation}\label{normaGcal}
\begin{array}{ll}
\displaystyle||\mathcal{G}||_{H^{3/2}_\gamma(\mathbb{R}^{n-1})}\le C\left(||b'_{-1}(D',\gamma)g||_{H^{3/2}_\gamma(\mathbb{R}^{n-1})}+||[b'_{-1}(D',\gamma),b_{\sharp}(x',D',\gamma)]\psi||_{H^{3/2}_\gamma(\mathbb{R}^{n-1})} \right.\\
\\
\displaystyle\quad\quad\left.+ ||[b'_{-1}(D',\gamma),\ell_{\sharp}(x',D',\gamma)]u^{I,s}_{|\, x_1=0}||_{H^{3/2}_\gamma(\mathbb{R}^{n-1})}+||d_{-3}(x', D',\gamma)\psi||_{H^{3/2}_\gamma(\mathbb{R}^{n-1})}\right.\\
\\
\displaystyle\quad\quad\left.+||[b'_{-1}(D',\gamma),M^I]u^I_{|\,x_1=0}||_{H^{3/2}_\gamma(\mathbb{R}^{n-1})} + ||e_{-3}(x', D',\gamma)u^{I,s}_{|\, x_1=0}||_{H^{3/2}_\gamma(\mathbb{R}^{n-1})}\right)\\
\\
\displaystyle{\quad\quad \le C\left(||g||_{H^{1/2}_\gamma(\mathbb{R}^{n-1})}+\frac1{\gamma^{1/2}}||\psi||_{L^2(\mathbb{R}^{n-1})}+||u^I_{|\,x_1=0}||_{H^{-1/2}_{\gamma}(\mathbb{R}^{n-1})}\right)\,,\quad\forall\,\gamma\ge 1\,,}
\end{array}
\end{equation}
with $\gamma-$independent positive constant $C$.
\subsection{Proof of estimate \eqref{l2h1estimate}}
We start from \eqref{lambdaestimate} and use \eqref{l2lambda}, \eqref{l2traccia}, \eqref{l2psi}, \eqref{stimaFcal}, \eqref{normaGcal} to get
\begin{equation*}
\begin{array}{ll}
\gamma\left(||u||^2_{L^2(\mathbb{R}^n_+)}+||u^I_{|\,x_1=0}||^2_{H^{-1/2}_\gamma(\mathbb{R}^{n-1})}\right)+\gamma^2||\psi||^2_{L^2(\mathbb{R}^{n-1})}\\
\\
\displaystyle{\le\frac{C}{\gamma^3}\left(||F||^2_{H^1_{tan, \gamma}(\mathbb{R}^n_+)}+||u||^2_{L^2(\mathbb{R}^n_+)}\right)+\frac{C}{\gamma}\left(||g||^2_{H^{1/2}_\gamma(\mathbb{R}^{n-1})}+\frac1{\gamma}||\psi||^2_{L^2(\mathbb{R}^{n-1})}+||u^I_{|\,x_1=0}||^2_{H^{-1/2}_{\gamma}(\mathbb{R}^{n-1})}\right)}
\end{array}
\end{equation*}
for all $\gamma\ge\gamma_1$, with $\gamma_1\ge 1$ large enough, and $C>0$ independent of $\gamma$.
\newline
Then estimate \eqref{l2h1estimate} follows by absorbing into the left-hand side the terms involving the functions $u$, $\psi$ in the right-hand side of the above inequality. This ends the proof of the statement $1$ of Theorem \ref{mainthm}.
\subsection{Proof of estimate \eqref{l2estimate}, statement $2$ of Theorem \ref{mainthm}.}\label{4.7}
In the end, let us shortly discuss the proof of the estimate \eqref{l2estimate} in Theorem \ref{mainthm}, statement $2$, under the assumption $(H)_2$ about the BVP \eqref{sbvp}.
\newline
As it was done in Section \ref{sec_regBVP}, for given smooth functions $(u, \psi)$ we firstly define the data
\begin{equation}\label{tfb1}
\begin{array}{ll}
F:=\mathcal L_\gamma u\,,\\
g:=b_\gamma\psi+\mathcal M^s_\gamma u^{I,s}+M^Iu^I+b_{\sharp}(x',D',\gamma)\psi+\ell_\sharp(x',D',\gamma)u^{I,s}\,.
\end{array}
\end{equation}
Notice that, differently from the case of statement $1$ (see formulas \eqref{tf}, \eqref{tb}), no lower order term in $u$ is involved in the definition of the interior source term $F$ in \eqref{tfb1}; this agrees with the assumption $(H)_2$, about the BVP \eqref{sbvp}, where no stability assumption under lower order interior operators is required for the estimate \eqref{h1estimate}.
\newline
Then, following the strategy already explained in Section \ref{gs}, we apply the operator $\lambda^{-1,\gamma}_\chi(Z)$ to the first equation in \eqref{tfb1} and we find
\begin{equation}\label{sistemaregolarizzato2}
\mathcal L_\gamma(\lambda^{-1,\gamma}_{\chi}(Z)u)=\lambda^{-1,\gamma}_{\chi}(Z)F-[\lambda^{-1,\gamma}_{\chi}(Z),\mathcal L_\gamma]u\,,\quad{\rm in}\,\,\mathbb{R}^n_+\,.
\end{equation}
Compared to the analogous equation \eqref{sistemaregolarizzato}, in the left-hand side of the above equation there is no lower order operator $\rho_\sharp(x,Z,\gamma)$. Moreover, we notice that the term involving the commutator $[\lambda^{-1,\gamma}_{\chi}(Z),\mathcal L_\gamma]$ has been put in the right-hand side of the equation \eqref{sistemaregolarizzato2}, which means that this term can be just regarded as a part of the source term of such an equation. This is a consequence of the fact that the a priori estimate \eqref{h1estimate}, that is associated to the BVP \eqref{sbvp} under the assumption $(H)_2$, does not lose derivatives from the interior source term $F$: the $H^1_{tan}-$norm of the unknown $u$ is measured by the $H^1_{tan}-$norm of $F$.
\newline
Concerning the boundary condition, the same arguments developed in the Section \ref{bdrysect} give that the function $(\lambda^{-1,\gamma}_\chi(Z)u, b'_{-1}(D',\gamma)\psi)$ satisfy the equation $\eqref{fsystem}_2$ on the boundary.
\newline
Applying the estimate \eqref{h1estimate} to the BVP \eqref{sistemaregolarizzato2}, $\eqref{fsystem}_2$ we find again that $(\lambda^{-1,\gamma}_\chi(Z)u, b'_{-1}(D',\gamma)\psi)$ obey the estimate
\begin{equation}\label{lambdaestimate1}
\begin{array}{ll}
\displaystyle{\gamma\left(||\lambda^{-1,\gamma}_\chi(Z)u||^2_{H^1_{ tan\,,\gamma}(\mathbb{R}^n_+)}+||(\lambda^{-1,\gamma}_\chi(Z)u^I)_{|\,x_1=0}||^2_{H^{1/2}_\gamma(\mathbb{R}^{n-1})}\right)+\gamma^2||b'_{-1}(D',\gamma)\psi||^2_{H^1_\gamma(\mathbb{R}^{n-1})}}
\\\\
\quad\quad\displaystyle{\le \frac{C_0}{\gamma}\left(||\mathcal{F}||^2_{H^{1}_{tan,\gamma}(\mathbb{R}^n_+)}+||\mathcal{G}||^2_{H^{3/2}_{\gamma}(\mathbb{R}^{n-1})}\right)\,,}
\end{array}
\end{equation}
where the interior source term $\mathcal F$ is defined now as
\begin{equation}\label{new_Fcal}
\mathcal F:=\lambda^{-1,\gamma}_{\chi}(Z)F-[\lambda^{-1,\gamma}_{\chi}(Z),\mathcal L_\gamma]u\,,
\end{equation}
while the boundary datum $\mathcal G$ is given by \eqref{Gcal}.
\newline
To conclude the proof, it remains to provide an estimate of the Sobolev norms of $\mathcal F$ and $\mathcal G$ appearing in the right-hand side of \eqref{lambdaestimate1}. The estimate of $\mathcal G$ is exactly the estimate \eqref{normaGcal} obtained in Section \ref{commbordo}.
\newline
Concerning the estimate of $\mathcal F$, from \eqref{new_Fcal} we firstly get
\begin{equation}\label{stimanewFcal0}
\begin{array}{ll}
\displaystyle||\mathcal F||_{H^1_{tan,\gamma}(\mathbb R^n_+)}\le\left\{||\lambda^{-1,\gamma}_\chi(Z)F||_{H^1_{tan,\gamma}(\mathbb R^n_+)}+||[\lambda^{-1,\gamma}_{\chi}(Z),\mathcal L_\gamma]u||_{H^1_{tan,\gamma}(\mathbb R^n_+)}\right\}\\
\\
\displaystyle\quad\le C\left\{||F||_{L^2(\mathbb R^n_+)}+||[\lambda^{-1,\gamma}_{\chi}(Z),\mathcal L_\gamma]u||_{H^1_{tan,\gamma}(\mathbb R^n_+)}\right\}\,,
\end{array}
\end{equation}
for a positive constant $C$ independent of $\gamma\ge 1$. In order to estimate the norm of the commutator term $[\lambda^{-1,\gamma}_{\chi}(Z),\mathcal L_\gamma]u$ involved in the right-hand side of \eqref{stimanewFcal0}, the same analysis performed in Sections \ref{commutatoretangenziale}, \ref{commutatorenormale} leads to the formula
\begin{equation}\label{decomposizione1}
\begin{array}{ll}
[\lambda^{-1,\gamma}_{\chi}(Z),\mathcal L_\gamma]u=[\lambda^{-1,\gamma}_{\chi}(Z),A_1^1\partial_1]u+[\lambda^{-1,\gamma}_{\chi}(Z),\mathcal L_{tan, \gamma}]u\\
\\
\quad =\left(\begin{array}{cc}q_{-1}(x,Z,\gamma)(\partial_1u^I)\\ 0\end{array}\right)+[\lambda^{-1,\gamma}_{\chi}(Z),\mathcal L_{tan, \gamma}]u\,,
\end{array}
\end{equation}
where the result of Proposition \ref{normale} (see also \eqref{componenti}) has been used to get the second equality above and $\mathcal L_{tan,\gamma}$ is the tangential differential operator defined in \eqref{Ltan}.
\newline
Since, in view of Proposition \ref{prodottoecommutatore}, $[\lambda^{-1,\gamma}_{\chi}(Z),\mathcal L_{tan, \gamma}]$ is a conormal operator with symbol in $\Gamma^{-1}$, Proposition \ref{continuitaconormale} yields
\begin{equation}\label{normacommtan}
||[\lambda^{-1,\gamma}_{\chi}(Z),\mathcal L_{tan, \gamma}]u||_{H^1_{tan,\gamma}(\mathbb{R}^n_+)}\le C||u||_{L^2(\mathbb{R}^n_+)}\,,
\end{equation}
with some positive $\gamma-$independent constant $C$.
\newline
As for $q_{-1}(x,Z,\gamma)$, it is a conormal operator with symbol in $\Gamma^{-2}$. Writing again $\partial_1u^I$ is terms of conormal derivatives of $u$ and $F$ as in \eqref{partial_1} gives
\begin{equation*}
q_{-1}(x,Z,\gamma)(\partial_1u^I)=q_{-1}(x,Z,\gamma)\left((A^{I,I}_1)^{-1}F^I+\mathcal{T}_\gamma u\right)\,,
\end{equation*}
where $\mathcal{T}_{\gamma}$ is the conormal operator of order $1$ defined in \eqref{normtan} (with $\rho_\sharp=0$). Hence in view of Proposition \ref{continuitaconormale} we get
\begin{equation}\label{normacommnor}
\begin{array}{ll}
\displaystyle{||[\lambda^{-1,\gamma}_{\chi}(Z),A^{I,I}_1\partial_1]u^{I}||_{H^1_{tan,\gamma}(\mathbb R^n_+)}=\left|\left|\lambda^{1,\gamma}(Z)\left(q_{-1}(x,Z,\gamma)((A^{I,I}_1)^{-1}F^I+\mathcal{T}_\gamma u)\right)\right|\right|_{L^2(\mathbb{R}^n_+)}}\\
\\
\displaystyle{\le \left|\left|\lambda^{1,\gamma}(Z)\left(q_{-1}(x,Z,\gamma)((A^{I,I}_1)^{-1}F^I)\right)\right|\right|_{L^2(\mathbb{R}^n_+)}+||\lambda^{1,\gamma}(Z)q_{-1}(x,Z,\gamma)\mathcal{T}_\gamma u||_{L^2(\mathbb{R}^n_+)}}\\
\\
\displaystyle{\le C_0\left(||F^I||_{H^{-1}_{tan,\gamma}(\mathbb{R}^n_+)}+||u||_{L^2(\mathbb{R}^n_+)}\right)\le C_0\left(\frac1{\gamma}||F^I||_{L^2(\mathbb{R}^n_+)}+||u||_{L^2(\mathbb{R}^n_+)}\right)\,.}
\end{array}
\end{equation}
Collecting estimates \eqref{stimanewFcal0}, \eqref{normacommtan}, \eqref{normacommnor}, we finally get
\begin{equation}\label{stimanewFcal}
\begin{array}{ll}
||\mathcal{F}||_{H^1_{tan,\gamma}(\mathbb{R}^n_+)}\le C\left(||F||_{L^2(\mathbb{R}^n_+)}+||u||_{L^2(\mathbb{R}^n_+)}\right)\,,\quad\forall\,\gamma\ge 1\,,
\end{array}
\end{equation}
with $\gamma-$independent positive constant $C$.
\newline
The estimate \eqref{l2estimate} follows at once by combining \eqref{lambdaestimate1} with \eqref{normaGcal} and \eqref{stimanewFcal}.


\appendix
\section{Proof of some technical lemmata}\label{appA}
\subsection{Proof of Lemma \ref{supporto}}\label{appA.1}
For a given smooth function $u\in C^\infty_{(0)}(\mathbb{R}^n_+)$, an explicit calculation gives that
$$
\lambda^{m,\gamma}_{\chi}(Z)u(x)=\left\langle\mathcal{F}^{-1}\lambda^{m,\gamma}(\cdot),\chi(\cdot)e^{-\frac{(\cdot)_1}{2}}u(x_1e^{-(\cdot)_1},x'-(\cdot)') \right\rangle\,,\quad\forall\,x=(x_1,x')\in\mathbb{R}^n_+\,.
$$
We have to prove that, under a suitable choice of $\varepsilon_0$, if $x\notin\mathbb{B}^+$ then $\lambda^{m,\gamma}_{\chi}(Z)u(x)=0$. This is true if
$$
y\mapsto v_x(y):=\chi(y)e^{-\frac{y_1}{2}}u(x_1e^{-y_1},x'-y')
$$
is identically zero as long as $x\notin\mathbb{B}^+$.
\newline
Since $\mathbb{R}^n_+\setminus\mathbb{B}^+=\{x=(x_1,x'):\,x_1\ge 1\,,\,\,\forall\,x'\in\mathbb{R}^{n-1}\}\cup\{x=(x_1,x'):\,|x'|\ge 1\,,\,\,\forall\,x_1\in[0,+\infty[\}$ we need to analyze the following two cases.
\newline
{$1^{st}$ case}: $x_1\ge 1$.
\newline
Let $y\in\mathbb{R}^n$ be arbitrarily fixed. If $y\notin{\rm supp}\chi$, then $\chi(y)=0$, which implies $v_x(y)=0$. If $y\in{\rm supp}\chi$, then we have $-\varepsilon_0\le y_1\le\varepsilon_0$ and $|y'|\le\varepsilon_0$. Hence, we derive that $e^{-\varepsilon_0}\le e^{-y_1}\le e^{\varepsilon_0}$ and, since $x_1\ge 1$, $x_1e^{-y_1}\ge e^{-y_1}\ge e^{-\varepsilon_0}$. Since $u(x_1,x')=0$ when $x_1\ge\delta_0$, if we choose $\varepsilon_0>0$ such that $e^{-\varepsilon_0}>\delta_0$ (that is equivalent to $\varepsilon_0<\log(1/\delta_0)$), then we get that
$$
\forall\,y\in{\rm supp}\chi\,,\,\,\forall\,x_1\ge 1\,:\quad u(x_1e^{-y_1},x'-y')=0\,,
$$
which gives $v_x(y)=0$.
\newline
{$2^{nd}$ case}: $|x'|\ge 1$.
\newline
Again, if $y\notin{\rm supp}\chi$, then $v_x(y)=0$. If $y\in{\rm supp}\chi$ then $|x'-y'|\ge |x'|-|y'|\ge 1-|y'|\ge 1-\varepsilon_0$. To conclude, in this case it is sufficient to choose $\varepsilon_0>0$ such that $1-\varepsilon_0>\delta_0$ in order to have again $v_x(y)=0$.
\newline
Finally, the result is proved if we choose $0<\varepsilon_0\le\min\{\log(1/\delta_0),1-\delta_0\}$.
\subsection{Proof of Lemma \ref{lemmatecnico2}}\label{appA.2}
For arbitrary $u\in L^2(\mathbb{R}^n_+)$, we observe that in view of \eqref{coeffcost}, \eqref{operatoreconormale}
\begin{equation}
(r_m(Z,\gamma)u)^{\sharp}=r_m(D,\gamma)u^\sharp=\mathcal{F}^{-1}(r_m(\cdot,\gamma))\ast u^\sharp\,;
\end{equation}
then, for arbitrary $\beta\in\mathbb{N}^n$:
$$
\partial^{\beta}(r_m(Z,\gamma)u)^{\sharp}=(\partial^{\beta}\mathcal{F}^{-1}(r_m(\cdot,\gamma))\ast u^\sharp\,.
$$
Since $H^{p}_{tan,\gamma}(\mathbb{R}^n_+)$ is topologically isomorphic to $H^p_{\gamma}(\mathbb{R}^n)$ for all positive integers $p$, via the $\sharp$ operator, and $u^\sharp\in L^2(\mathbb{R}^n)$, then $r_{m}(Z,\gamma)u\in H^p_{tan,\gamma}(\mathbb{R}^n_+)$ is proven provided that $\partial^{\beta}\mathcal{F}^{-1}(r_m(\cdot,\gamma))$ belongs to $L^1(\mathbb{R}^n)$ for all $\beta\in\mathbb{N}^n$ with $|\beta|\le p$.
\newline
On the other hand, by the standard properties of the Fourier transform and by \eqref{funzionepesomodificata}, we get
\begin{equation}\label{resto}
\begin{array}{ll}
\mathcal{F}^{-1}(r_m(\cdot,\gamma))=\mathcal{F}^{-1}((I-\chi(D))\lambda^{m,\gamma})=\mathcal{F}^{-1}(\mathcal{F}^{-1}((1-\chi)\widehat{\lambda^{m,\gamma}}))\\
\\
=(2\pi)^{-n}\widetilde{((1-\chi)\widehat{\lambda^{m,\gamma}})}=(1-\chi)\mathcal{F}^{-1}(\lambda^{m,\gamma}),
\end{array}
\end{equation}
where we have used the identity $\mathcal{F}^{-1}g= (2\pi)^{-n}\widetilde{\widehat{g}},$ with $\widetilde g(x)=g(-x)$, and that $\chi$ is an even function.
\newline
Let us firstly focus on $\mathcal{F}^{-1}(\lambda^{m,\gamma})$. For arbitrary positive integers $N, k$ and $\beta\in\mathbb{N}^n$ one computes
\begin{equation}\label{F-1}
\begin{array}{ll}
\displaystyle{|z|^{2(N+k)}\partial^\beta_z\mathcal{F}^{-1}(\lambda^{m,\gamma})(z)=i^{|\beta|}\sum\limits_{|\alpha|=N+k}\frac{(N+k)!}{\alpha!}z^{2\alpha}\mathcal{F}^{-1}(\xi^\beta\lambda^{m,\gamma})(z)}\\
\\
\displaystyle{=i^{|\beta|}(-1)^{N+k}\sum\limits_{|\alpha|=N+k}\frac{(N+k)!}{\alpha!}\mathcal{F}^{-1}\left(\partial^{2\alpha}_\xi(\xi^\beta\lambda^{m,\gamma})\right)(z)\,.}
\end{array}
\end{equation}
On the other hand, since $\lambda^{m,\gamma}\in\Gamma^m$, for $|\alpha|=N+k$ we get
$$
\begin{array}{ll}
|\partial^{2\alpha}_\xi(\xi^\beta\lambda^{m,\gamma}(\xi))|\le C_{\alpha,\beta}\lambda^{m+|\beta|-2|\alpha|,\gamma}(\xi)= C_{\alpha,\beta}\lambda^{m+|\beta|-2(N+k),\gamma}(\xi)\\
\\
=C_{\alpha,\beta}\lambda^{-2k,\gamma}(\xi)\lambda^{m+|\beta|-2N,\gamma}(\xi)\le C_{\alpha,\beta}\gamma^{-2k}\lambda^{m+|\beta|-2N,\gamma}(\xi) \,,\quad\forall\,\xi\in\mathbb{R}^n\,,\,\,\forall\,\gamma\ge 1\,.
\end{array}
$$
For fixed $\beta$, we choose the integer $N_{\beta}=N$ such that $2N\ge m+|\beta|+1+n$; then
$$
\lambda^{m+|\beta|-2N,\gamma}(\xi)\le\lambda^{-(1+n),\gamma}(\xi)\le (1+|\xi|^2)^{-\frac{n+1}{2}}\,,\quad\forall\,\xi\in\mathbb{R}^n\,,\,\,\forall\,\gamma\ge 1
$$
yields
$$
|\partial^{2\alpha}_\xi(\xi^\beta\lambda^{m,\gamma}(\xi))|\le C_{\alpha,\beta}\gamma^{-2k}(1+|\xi|^2)^{-\frac{n+1}{2}}\,,\quad\forall\,\xi\in\mathbb{R}^n\,,\,\,\forall\,\gamma\ge 1\,;
$$
hence $\partial^{2\alpha}_\xi(\xi^\beta\lambda^{m,\gamma}(\xi))\in L^1(\mathbb{R}^n)$ and, from Riemann-Lebesgue Theorem, $\mathcal{F}^{-1}(\partial^{2\alpha}_\xi(\xi^\beta\lambda^{m,\gamma}(\xi)))\in L^\infty(\mathbb{R}^n)\cap C^0(\mathbb{R}^n)$ and we have
$$
\begin{array}{ll}
\displaystyle{||\mathcal{F}^{-1}(\partial^{2\alpha}_\xi(\xi^\beta\lambda^{m,\gamma}(\xi)))||_{L^\infty(\mathbb{R}^n)}\le\int_{\mathbb{R}^n}|\partial^{2\alpha}_\xi(\xi^\beta\lambda^{m,\gamma}(\xi))|\,d\xi}\\
\\
\qquad\displaystyle{\le C_{\alpha,\beta}\gamma^{-2k}\int_{\mathbb{R}^n}(1+|\xi|^2)^{-\frac{n+1}{2}}d\xi\le C_{\alpha,\beta,n}\gamma^{-2k}\,,\quad \forall\,\gamma\ge 1\,.}
\end{array}
$$
Therefore, in view of \eqref{F-1},
$$
|z|^{2(N+k)}\partial^\beta_z\mathcal{F}^{-1}(\lambda^{m,\gamma})(z)\in  L^\infty(\mathbb{R}^n)\cap C^0(\mathbb{R}^n)
$$
and
$$
|z|^{2(N+k)}|\partial^\beta_z\mathcal{F}^{-1}(\lambda^{m,\gamma})(z)|\le C_{k,N,\beta,n}\gamma^{-2k}\,,\quad \forall\,z\in\mathbb{R}^n\,,\gamma\ge 1\,,
$$
where the constant $C_{k,N,\beta,n}$ is independent of $\gamma$.
\newline
Summarizing, we have proved that:
$$
\begin{array}{ll}
\displaystyle{\forall\,\beta\in\mathbb{N}^n\,,\forall\,k,N\in\mathbb{N},\,\text{with}\,\, k\ge 1\,,\,\,\,N\ge\frac{m+|\beta|+1+n}{2}\,,\,\,\,\exists\,C=C_{k,N,\beta,n}>0:}\\
\\
\displaystyle{i.\quad |z|^{2(N+k)}\partial^\beta_z\mathcal{F}^{-1}(\lambda^{m,\gamma})(z)\in  L^\infty(\mathbb{R}^n)\cap C^0(\mathbb{R}^n)}\\\\
\displaystyle{ii.\quad |z|^{2(N+k)}|\partial^\beta_z\mathcal{F}^{-1}(\lambda^{m,\gamma})(z)|\le C_{k,N,\beta,n}\gamma^{-2k}\,,\quad \forall\,z\in\mathbb{R}^n\,,\gamma\ge 1\,.}
\end{array}
$$
For arbitrary $\beta\in\mathbb{N}^n$, we consider $\partial^{\beta}\mathcal{F}^{-1}(r_m(\cdot,\gamma))$. From \eqref{resto} we compute, by Leibniz formula,
\begin{equation}\label{resto1}
\partial^{\beta}\mathcal{F}^{-1}(r_m(\cdot,\gamma))(z)=-\sum\limits_{\nu<\beta}\begin{pmatrix}\beta\\\nu\end{pmatrix}\partial^{\beta-\nu}_z\chi(z)\partial^\nu_z\mathcal{F}^{-1}(\lambda^{m,\gamma})(z)+(1-\chi)(z)\partial^{\beta}_z\mathcal{F}^{-1}(\lambda^{m,\gamma})(z)\,.
\end{equation}
Note that $\partial^{\beta-\nu}\chi$, for all $\nu<\beta$, and $1-\chi$ are identically zero on a neighbourhood of $z=0$. Then, from {\it i}, {\it ii} above we derive that
$$
\begin{array}{ll}
\displaystyle{\forall\,\beta\in\mathbb{N}^n\,,\forall\,k,N\in\mathbb{N},\,\text{with}\,\, k\ge 1\,,\,\,\,N\ge\frac{m+|\beta|+1+n}{2}\,,\,\,\,\exists\,C=C_{k,N,\beta,\chi,n}>0:}\\
\\
\displaystyle{iii.\quad \partial^{\beta-\nu}_z\chi(z)\partial^\nu_z\mathcal{F}^{-1}(\lambda^{m,\gamma})(z),\,  (1-\chi)(z)\partial^\beta_z\mathcal{F}^{-1}(\lambda^{m,\gamma})(z) \in  L^\infty(\mathbb{R}^n)\cap C^0(\mathbb{R}^n)}\,,\quad\forall\,\nu<\beta\,;\\\\
\displaystyle{\begin{array}{ll}iv.\quad |\partial^{\beta-\nu}_z\chi(z)\partial^\nu_z\mathcal{F}^{-1}(\lambda^{m,\gamma})(z)|\le C_{k,N,\beta,\chi,n}\gamma^{-2k}(1+|z|^2)^{-N}\,,\\
\\
\quad\quad |(1-\chi)(z)\partial^\beta_z\mathcal{F}^{-1}(\lambda^{m,\gamma})(z)|\le C_{k,N,\beta,\chi,n}\gamma^{-2k}(1+|z|^2)^{-N}\,,\,\,\forall\,z\in\mathbb{R}^n\,,\,\,\nu<\beta\,,\,\,\gamma\ge 1\,.\end{array}}
\end{array}
$$
Thus, applying {\it iv} for $N\ge\max\left\{\displaystyle{\frac{n+1}{2}},\displaystyle{\frac{m+|\beta|+n+1}{2}}\right\}$, from \eqref{resto1} we obtain that $\partial^{\beta}\mathcal{F}^{-1}(r_m(\cdot,\gamma))\in L^1(\mathbb{R}^n)$ and for all $\gamma\ge 1$:
\begin{equation}\label{resto2}
||\partial^{\beta}\mathcal{F}^{-1}(r_m(\cdot,\gamma))||_{L^1(\mathbb{R}^n)}\le C_{N,k,n,\beta,\chi}\gamma^{-2k}\int_{\mathbb{R}^n}(1+|z|^2)^{-N}dz\le C_{k,n,\beta,\chi}\gamma^{-2k}\le C_{k,n,\beta,\chi}\gamma^{-k} \,,
\end{equation}
where the constant $C_{k,n,\beta,\chi}$ is independent of $\gamma$.
\newline
For every positive integer $p$, applying the above result to all multi-indices $\beta\in\mathbb{N}^n$ with $|\beta|\le p$ gives that $\partial^\beta (r_{m}(Z,\gamma)u)^\sharp=\partial^\beta\mathcal{F}^{-1}(r_m(\cdot,\gamma))\ast u^\sharp$ belongs to $L^2(\mathbb{R}^n)$ with
\begin{equation}\label{derivatabeta}
||\partial^\beta (r_{m}(Z,\gamma)u)^\sharp||_{L^2(\mathbb{R}^n)}\le ||\partial^{\beta}\mathcal{F}^{-1}(r_m(\cdot,\gamma))||_{L^1(\mathbb{R}^n)}||u^{\sharp}||_{L^2(\mathbb{R}^n)}\le C_{k,n,\beta,\chi}\gamma^{-k}||u||_{L^2(\mathbb{R}^n_+)}\,.
\end{equation}
This gives that $r_m(Z,\gamma)u\in H^p_{tan,\gamma}(\mathbb{R}^n_+)$. Furthermore, for an arbitrary positive integer $h$ we apply \eqref{derivatabeta} for each $\beta\in\mathbb{N}^n$ with $|\beta|\le p$ for $k=p-|\beta|+h$ to get
\begin{equation}
\begin{array}{ll}
||r_m(Z,\gamma)u||^2_{H^p_{tan,\gamma}(\mathbb{R}^n_+)}\le C_p||(r_m(Z,\gamma)u)^\sharp||^2_{H^p_\gamma(\mathbb{R}^n)}=\sum\limits_{|\beta|\le p}\gamma^{2(p-|\beta|)}||\partial^\beta(r_{m}(Z,\gamma)u)^\sharp||^2_{L^2(\mathbb{R}^n)}\\\\
\le\sum\limits_{|\beta|\le p}\gamma^{2(p-|\beta|)} C_{h,p,n,\beta,\chi}\gamma^{-2(p-|\beta|+h)}||u||^2_{L^2(\mathbb{R}^n_+)}\le C_{h,p,n,\chi}\gamma^{-2h}||u||^2_{L^2(\mathbb{R}^n_+)}\,,
\end{array}
\end{equation}
for a suitable $\gamma-$independent positive constant $C_{h,p,n,\chi}$. This shows the estimate \eqref{kregolarizzazione} and completes the proof.
\subsection{Proof of Proposition \ref{bordo}}\label{appA.3}
Let $u\in C^{\infty}_{(0)}(\mathbb{R}^n_+)$; to find a symbol $b'_{m}$ satisfying \eqref{formulatraccia}, from \eqref{funzionepesomodificata} we firstly compute
\begin{equation*}
\begin{array}{ll}
\displaystyle{(\lambda^{m,\gamma}_{\chi}(Z)u)^{\sharp}(x)=\lambda^{m,\gamma}_{\chi}(D)(u^{\sharp})(x)=(\mathcal{F}^{-1}(\lambda^{m,\gamma}_{\chi})\ast u^{\sharp})(x)=\langle\mathcal{F}^{-1}(\lambda^{m,\gamma}_{\chi}),u^{\sharp}(x-\cdot)\rangle}\\
\\
\displaystyle{=\langle\mathcal{F}^{-1}(\lambda^{m,\gamma}),\chi(\cdot)e^\frac{{x_1-(\cdot)_1}}{2}u(e^{x_1-(\cdot)_1},x'-(\cdot)')\rangle\,,\quad\forall\,(x_1,x')\in\mathbb{R}^n\,,}
\end{array}
\end{equation*}
hence, by \eqref{sharp-1},
\begin{equation*}
\begin{array}{ll}
\displaystyle{\lambda^{m,\gamma}_{\chi}(Z)u(x)=\langle\mathcal{F}^{-1}(\lambda^{m,\gamma}),\chi(\cdot)e^\frac{{x_1-(\cdot)_1}}{2}u(e^{x_1-(\cdot)_1},x'-(\cdot)')\rangle^{\sharp^{-1}}}\\\\ \displaystyle{=\frac1{\sqrt{x_1}}\left\langle \langle\mathcal{F}^{-1}(\lambda^{m,\gamma}),\chi(\cdot)e^\frac{{\log x_1-(\cdot)_1}}{2}u(e^{\log x_1-(\cdot)_1},x'-(\cdot)')\right\rangle}\\\\
\displaystyle{=\left\langle \mathcal{F}^{-1}(\lambda^{m,\gamma}),\chi(\cdot)e^{\frac{-(\cdot)_1}{2}}u(x_1e^{-(\cdot)_1},x'-(\cdot)')\right\rangle}\\\\
\displaystyle{=\left\langle \lambda^{m,\gamma},\mathcal{F}^{-1}\left(\chi(\cdot)e^{\frac{-(\cdot)_1}{2}}u(x_1e^{-(\cdot)_1},x'-(\cdot)')\right)\right\rangle}\\\\
\displaystyle{=(2\pi)^{-n}\int\lambda^{m,\gamma}(\xi)\left(\int e^{i\xi\cdot y}\chi(y)e^{-\frac{y_1}{2}}u(x_1e^{-y_1},x'-y')dy\right)d\xi\,,\quad\forall\,x_1>0\,,\,\,\forall\,x'\in\mathbb{R}^{n-1}\,.}
\end{array}
\end{equation*}
The regularity of $u$ legitimates all the above calculations. Setting $x_1=0$ in the last expression above, we deduce the corresponding expression for the trace on the boundary of $\lambda^{m,\gamma}_{\chi}(Z)u$
\begin{equation}\label{formulatraccia0}
(\lambda^{m,\gamma}_{\chi}(Z)u)_{|\,x_1=0}(x')=(2\pi)^{-n}\int\lambda^{m,\gamma}(\xi)\left(\int e^{i\xi\cdot y}\chi(y)e^{-\frac{y_1}{2}}(u_{|\,x_1=0})(x'-y')dy\right)d\xi\,.
\end{equation}
\noindent
Now we substitute \eqref{prodottotensoriale} into the $y-$integral appearing in the last expression above; then Fubini's theorem gives
\begin{equation}\label{fubini1}
\begin{array}{ll}
\displaystyle{\int e^{i\xi\cdot y}\chi_1(y_1)\widetilde{\chi}(y')e^{-\frac{y_1}{2}}(u_{|\,x_1=0})(x'-y')dy}\\
\displaystyle{=\int e^{i\xi'\cdot y'}\left(\int e^{i\xi_1y_1}e^{-\frac{y_1}{2}}\chi_1(y_1)dy_1\right)\widetilde{\chi}(y')(u_{|\,x_1=0})(x'-y')dy'}\\
\displaystyle{=\int e^{i\xi'\cdot y'}\left(\int e^{-i\xi_1(-y_1)}e^{-\frac{y_1}{2}}\chi_1(y_1)dy_1\right)\widetilde{\chi}(y')(u_{|\,x_1=0})(x'-y')dy'}\\
\displaystyle{=\int e^{i\xi'\cdot y'}\left(\int e^{-i\xi_1(-y_1)}e^{-\frac{y_1}{2}}\chi_1(-y_1)dy_1\right)\widetilde{\chi}(y')(u_{|\,x_1=0})(x'-y')dy'}\\
\displaystyle{=\left(e^{\frac{(\cdot)_1}{2}}\chi_1\right)^{\wedge_1}(\xi_1)\int e^{i\xi'\cdot y'}\widetilde{\chi}(y')(u_{|\,x_1=0})(x'-y')dy'\,,}
\end{array}
\end{equation}
where we have used that $\chi_1$ is even and $\wedge_1$ denotes the one-dimensional Fourier transformation with respect to $y_1$. Writing, by the inversion formula, $(u_{|\,x_1=0})(x'-y')=(2\pi)^{-n+1}\int e^{i(x'-y')\cdot\eta'}\widehat{u_{|\,x_1=0}}(\eta')d\eta'$ and using once more Fubini's theorem and that $\widetilde{\chi}$ is even, we further obtain
\begin{equation}\label{fubini2}
\begin{array}{ll}
\displaystyle{\int e^{i\xi'\cdot y'}\widetilde{\chi}(y')(u_{|\,x_1=0})(x'-y')dy'=(2\pi)^{-n+1}\int e^{i\xi'\cdot y'}\widetilde{\chi}(y')\left(\int e^{i(x'-y')\cdot\eta'}\widehat{u_{|\,x_1=0}}(\eta')d\eta'\right)dy'}\\
\displaystyle{=\int e^{ix'\cdot\eta'}\left((2\pi)^{-n+1}\int e^{i(\xi'-\eta')\cdot y'}\widetilde{\chi}(y')dy'\right)\widehat{u_{|\,x_1=0}}(\eta')d\eta'}\\
\displaystyle{=\int e^{ix'\cdot\eta'}\left((2\pi)^{-n+1}\int e^{-i(\xi'-\eta')\cdot (-y')}\widetilde{\chi}(-y')dy'\right)\widehat{u_{|\,x_1=0}}(\eta')d\eta'}\\
\displaystyle{=(2\pi)^{-n+1}\int e^{ix'\cdot\eta'}\widehat{\widetilde{\chi}}(\xi'-\eta')\widehat{u_{|\,x_1=0}}(\eta')d\eta'\,;}
\end{array}
\end{equation}
here $\wedge$ is used here to denote the $(n-1)-$dimensional Fourier transformation with respect to $x'$. Inserting \eqref{fubini1}, \eqref{fubini2} into \eqref{formulatraccia0} then leads to
\begin{equation}\label{formulatraccia1}
\begin{array}{ll}
\displaystyle{(\lambda^{m,\gamma}_{\chi}(Z)u)_{|\,x_1=0}(x')}\\
\displaystyle{=(2\pi)^{-n}\int\lambda^{m,\gamma}(\xi)\left(e^{\frac{(\cdot)_1}{2}}\chi_1\right)^{\wedge_1}(\xi_1)\left((2\pi)^{-n+1}\int e^{ix'\cdot\eta'}\widehat{\widetilde{\chi}}(\xi'-\eta')\widehat{u_{|\,x_1=0}}(\eta')d\eta'\right)d\xi\,.}
\end{array}
\end{equation}
Because $\left(e^{\frac{(\cdot)_1}{2}}\chi_1\right)^{\wedge_1}\in\mathcal{S}(\mathbb{R})$, $\widehat{\widetilde{\chi}}\in\mathcal{S}(\mathbb{R}^{n-1})$ and $\widehat{u_{|\,x_1=0}}\in\mathcal{S}(\mathbb{R}^{n-1})$, the double integral
\begin{equation*}
\int\int e^{ix'\cdot\eta'} \lambda^{m,\gamma}(\xi)\left(e^{\frac{(\cdot)_1}{2}}\chi_1\right)^{\wedge_1}(\xi_1)\widehat{\widetilde{\chi}}(\xi'-\eta')\widehat{u_{|\,x_1=0}}(\eta')d\eta'd\xi
\end{equation*}
converges absolutely; hence Fubini's theorem allows to exchange the order of the integrations in \eqref{formulatraccia1} and find
\begin{equation}\label{formulatraccia2}
(\lambda^{m,\gamma}_{\chi}(Z)u)_{|\,x_1=0}(x')=(2\pi)^{-n+1}\int e^{ix'\cdot\eta'} b'_{m}(\eta',\gamma)\widehat{u_{|\,x_1=0}}(\eta')d\eta'\,,
\end{equation}
where $b'_{m}(\eta',\gamma)$ is defined by \eqref{simbolobordobis}.
This shows the identity \eqref{formulatraccia}.
\subsection{Proof of Lemma \ref{lemmatecnico5}}\label{appA.4}
We follow the same lines of the proof of \cite[Lemma 4.11]{moseBVP}.
Setting for short
\begin{equation}\label{nucleoabbreviato}
\phi(x):=e^{x_1/2}\chi_1(x_1)\widetilde{\chi}(x')\,,
\end{equation}
the symbol \eqref{simbolobordobis} can be re-written as
\begin{equation}\label{simbolobordotris}
b'_{m}(\xi',\gamma)=(2\pi)^{-n}\int\lambda^{m,\gamma}(\eta_1,\eta'+\xi')\widehat{\phi}(\eta)\,d\eta\,.
\end{equation}
Substituting in \eqref{simbolobordotris} the function $\eta\mapsto\lambda^{m,\gamma}(\eta_1,\eta'+\xi')$ by its Taylor expansion about $\eta=0$
\begin{equation}\label{taylor2}
\lambda^{m,\gamma}(\eta_1,\eta'+\xi')=\sum\limits_{|\alpha|<N}\frac{(\partial^{\alpha}\lambda^{m,\gamma})(0,\xi')}{\alpha!}\eta^\alpha+N\sum\limits_{|\alpha|=N}\frac{\eta^\alpha}{\alpha!}
\int_0^1(\partial^\alpha\lambda^{m,\gamma})(t\eta_1,\xi'+t\eta')(1-t)^{N-1}dt
\end{equation}
for $N=2$, we get
\begin{equation}\label{splitting0}
\begin{array}{ll}
\displaystyle{b'_{m}(\xi',\gamma)}\\
\displaystyle{=(2\pi)^{-n}\int\left[\lambda^{m,\gamma}(\xi')+\sum\limits_{j=1}^n\eta_j(\partial_j\lambda^{m,\gamma})(0,\xi')+2\sum\limits_{|\alpha|=2}\frac{\eta^\alpha}{\alpha!}
\int_0^1(\partial^\alpha\lambda^{m,\gamma})(t\eta_1,\xi'+t\eta')(1-t)dt\right]\widehat{\phi}(\eta)\,d\eta}\\
\displaystyle{=(2\pi)^{-n}\lambda^{m,\gamma}(\xi')\int\widehat{\phi}(\eta)\,d\eta-i(2\pi)^{-n}\sum\limits_{j=1}^n(\partial_j\lambda^{m,\gamma})(0,\xi')\int\widehat{\partial_j\phi}(\eta)d\eta}\\ \displaystyle{-2(2\pi)^{-n}\sum\limits_{|\alpha|=2}\frac1{\alpha!}\int\left(\int_0^1\partial^{\alpha}\lambda^{m,\gamma}(t\eta_1,t\eta'+\xi')(1-t)\,dt\right)\widehat{\partial^{\alpha}\phi}(\eta)\,d\eta\,.}
\end{array}
\end{equation}
From Plancherel's identity and \eqref{nucleoabbreviato} (cf. also \eqref{chi}, \eqref{prodottotensoriale}) we compute
\begin{equation}\label{valoriphi}
\begin{array}{ll}
\displaystyle{(2\pi)^{-n}\int\widehat{\phi}(\eta)\,d\eta=\phi(0)=1\,,}\\
\displaystyle{(2\pi)^{-n}\int\widehat{\partial_1\phi}(\eta)\,d\eta=\partial_1\phi(0)=-\frac12\,,}\\ \displaystyle{(2\pi)^{-n}\int\widehat{\partial_j\phi}(\eta)\,d\eta=\partial_j\phi(0)=0\,,\quad j\ge 2\,.}
\end{array}
\end{equation}
On the other hand, from \eqref{weightfcts} one trivially computes that $(\partial_1\lambda^{m,\gamma})(0,\xi')=0$ for all $\xi'\in\mathbb{R}^{n-1}$. Inserting the last relation and \eqref{valoriphi} into \eqref{splitting0} then gives \eqref{splittingbordo}, where we set
\begin{equation}\label{beta}
\beta_{m,\delta}(\xi',\gamma):=-2(2\pi)^{-n}\sum\limits_{|\alpha|=2}\frac1{\alpha!}\int\left(\int_0^1\partial^\alpha\lambda^{m,\gamma}(t\eta_1,t\eta'+\xi')(1-t)\,dt\right)\widehat{\partial^\alpha\phi}(\eta)\,d\eta\,.
\end{equation}
To prove that $\beta_m$ belongs to $\Gamma^{m-2}$, differentiation under the integral sign of \eqref{beta} gives, for an arbitrary $\nu'\in\mathbb{N}^{n-1}$,
\begin{equation}\label{diff0}
\begin{array}{ll}
\displaystyle{\partial^{\nu'}_{\xi'}\beta_{m,\delta}(\xi',\gamma)=-2(2\pi)^{-n}\sum\limits_{|\alpha|=2}^n\frac1{\alpha!}\int\left[\partial^{\nu'}_{\xi'}\left(\int_0^1(\partial^\alpha\lambda^{m,\gamma})(t\eta_1,t\eta'+\xi')(1-t)\,dt\right)\right]\widehat{\partial^\alpha\phi}(\eta)\,d\eta}\\
\displaystyle{=-2(2\pi)^{-n}\sum\limits_{|\alpha|=2}^n\frac1{\alpha!}\int\left[\int_0^1(\partial^{\alpha+(0,\nu')}\lambda^{m,\gamma})(t\eta_1,t\eta'+\xi')(1-t)\,dt\right]\widehat{\partial^\alpha\phi}(\eta)\,d\eta\,;}
\end{array}
\end{equation}
hence from $\lambda^{m,\gamma}\in\Gamma^m$ we obtain
\begin{equation}\label{diff1}
|\partial^{\nu'}_{\xi'}\beta_{m,\delta}(\xi',\gamma)|
\le C_{m,\nu'}\sum\limits_{|\alpha|=2}\int\left(\int_0^1\lambda^{m-2-|\nu'|,\gamma}(t\eta_1,t\eta'+\xi')\,dt\right)|\widehat{\partial^\alpha\phi}(\eta)|\,d\eta\,,
\end{equation}
for a suitable $\gamma-$independent positive constant $C_{m,\nu'}$.
\newline
Recall that, for all $s\in\mathbb{R}$, $\gamma\ge 1$ and $\xi,\eta\in\mathbb{R}^n$
\begin{equation}\label{gammapeetre}
\lambda^{s,\gamma}(\xi)\le 2^{|s|}\lambda^{s,\gamma}(\xi-\eta)\lambda^{|s|}(\eta)\,,
\end{equation}
see \cite{chazarain-piriou}, \cite[Lemma 1.18]{saintraymond91}. Then, we apply \eqref{gammapeetre} (for $s=m-2-|\nu'|$) to estimate $\lambda^{m-2-|\nu'|,\gamma}(t\eta_1,t\eta'+\xi')$ within the right-hand side of \eqref{diff1} by
$$
\begin{array}{ll}
\displaystyle{
\lambda^{m-2-|\nu'|,\gamma}(t\eta_1,t\eta'+\xi')\le 2^{|m-2-|\nu'||}\lambda^{m-2-|\nu'|,\gamma}(\xi')\lambda^{|m-2-|\nu'||}(t \eta)}\\
\qquad\qquad\qquad\qquad\displaystyle{\le 2^{|m-2-|\nu'||}\lambda^{m-2-|\nu'|,\gamma}(\xi')\lambda^{|m-2-|\nu'||}(\eta)
\,,\quad\forall\,\xi'\in\mathbb{R}^{n-1},\,\eta\in\mathbb{R}^n\,,t\in[0,1]\,,}
\end{array}
$$
and combine with \eqref{diff1} to finally get
\begin{equation}\label{diff2}
|\partial^{\nu'}_{\xi'}\beta_{m}(\xi',\gamma)|\le C'_{m,\nu'}\lambda^{m-2-|\nu'|,\gamma}(\xi')\sum\limits_{|\alpha|=2}\int \lambda^{|m-2-|\nu'||}(\eta)|\widehat{\partial^\alpha\phi}(\eta)|d\eta\le C''_{m,\nu'}\lambda^{m-2-|\nu'|,\gamma}(\xi')\,,
\end{equation}
for $C'_{m,\nu'}, C''_{m,\nu'}$ suitable positive constants independent of $\gamma$ (notice in particular that the integrals in the sum involved in the right-hand side of the first inequality in \eqref{diff2} are absolutely convergent, because $\widehat{\partial^{\alpha}{\phi}}\in\mathcal{S}(\mathbb{R}^n)$ for all $|\alpha|=2$).
\subsection{Proof of Corollary \ref{supportobordo}}\label{appA.5}
For all $\psi\in C^\infty_0(\mathbb{R}^{n-1})$ under the above assumptions, let $\Psi\in C^\infty_{(0)}(\mathbb{R}^n_+)$ be chosen in such a way that
\begin{equation}\label{rilevamento}
{\rm supp}\,\Psi\subseteq\mathbb{B}^+_{\delta_0}\,,\quad \Psi_{|\,x_1=0}=\psi\,.
\end{equation}
Such a function $\Psi$ could be for instance obtained as
$$
\Psi(x_1,x'):=\eta(x_1)\psi(x')\,,\quad\forall\,x_1\ge 0\,,\,\,x'\in\mathbb{R}^{n-1}\,,
$$
with $\eta=\eta(x_1)\in C^\infty_{(0)}([0,+\infty[)$ such that
$$
\eta(x_1)=1\,,\quad 0\le x_1<\frac{\delta_0}{2}\,,\quad \eta(x_1)=0\,,\quad x_1>\delta_0\,.
$$
Then, in view of Proposition \ref{bordo} one has
$$
b'_{m}(D',\gamma)\psi=b'_{m}(D',\gamma)(\Psi_{|\,x_1=0})=(\lambda^{m,\gamma}_{\chi}(Z)\Psi)_{|\,x_1=0}\,.
$$
Then, from \eqref{rilevamento} and Lemma \ref{supporto},
$$
{\rm supp}\,b'_{m}(D',\gamma)\psi\subset\mathbb{B}^+\cap\{x_1=0\}=\mathcal{B}(0;1)\,.
$$
\subsection{Proof of Lemma \ref{stimesimbolonormale}}\label{appA.6}
Recall that we have defined for each $k=1,\dots,n$
\begin{equation}\label{qk}
q_{k,m}(x,\xi,\gamma):=(2\pi)^{-n}\int_{\mathbb{R}^n}\widehat{b}_k(x,\eta)\partial_k\lambda^{m,\gamma}(\xi-\eta)\,d\eta\,,
\end{equation}
where the functions $b_k=b_k(x,y)$ (cf. \eqref{taylor}) are given in $C^{\infty}(\mathbb{R}^n\times\mathbb{R}^n)$, have bounded derivatives in $\mathbb{R}^n\times\mathbb{R}^n$, and satisfy for all $x\in\mathbb{R}^n$
\begin{equation*}
{\rm supp}\,b_k(x,\cdot)\subseteq\{|y|\le 2\varepsilon_0\}\,.
\end{equation*}
Recall also that $\widehat{b}_k(x,\zeta)$ denotes the partial Fourier transform of $b_k(x,y)$ with respect to $y$.
\newline
The following lemma is concerned with the behavior at infinity of $\widehat{b}_k(x,\zeta)$.
\begin{lemma}
Let the function $b_k=b_k(x,y)\in C^{\infty}(\mathbb{R}^n\times\mathbb{R}^n)$ obey all of the preceding assumptions. Then, for every positive integer $N$ and all multi-indices $\alpha\in\mathbb{N}^n$ there exists a positive constant $C_{N,\alpha}$ such that
\begin{equation}\label{trasformataparziale}
(1+|\zeta|^2)^N|\partial^{\alpha}_x\widehat{b}_k(x,\zeta)|\le C_{N,\alpha}\,,\quad\forall\,x\,,\,\zeta\in\mathbb{R}^n\,.
\end{equation}
\end{lemma}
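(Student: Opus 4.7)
The plan is to prove the bound by the standard ``integration by parts in Fourier space'' trick, taking advantage of the two structural features of $b_k$: its derivatives are uniformly bounded on $\mathbb{R}^n\times\mathbb{R}^n$, and its $y$-support is contained in the fixed compact set $\{|y|\le 2\varepsilon_0\}$ for every $x\in\mathbb{R}^n$.

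First, since derivatives of $b_k$ are bounded and the $y$-support of $b_k(x,\cdot)$ is uniformly compact, differentiation under the integral sign is legitimate and one has
\begin{equation*}
\partial^{\alpha}_x\widehat{b}_k(x,\zeta)=\int_{\mathbb{R}^n}e^{-iy\cdot\zeta}\,\partial^{\alpha}_x b_k(x,y)\,dy\,,\qquad\forall\,x,\zeta\in\mathbb{R}^n\,.
\end{equation*}
Next, for each positive integer $N$, I multiply by $(1+|\zeta|^2)^{N}$ and convert the polynomial in $\zeta$ into a differential operator in $y$ by the usual identity
\begin{equation*}
(1+|\zeta|^2)^N e^{-iy\cdot\zeta}=(I-\Delta_y)^N e^{-iy\cdot\zeta}\,.
\end{equation*}
Integrating by parts $2N$ times (the boundary terms vanish because $\partial^{\alpha}_x b_k(x,\cdot)$ is compactly supported in $y$, uniformly in $x$) yields
\begin{equation*}
(1+|\zeta|^2)^N\partial^{\alpha}_x\widehat{b}_k(x,\zeta)=\int_{\mathbb{R}^n}e^{-iy\cdot\zeta}\,(I-\Delta_y)^N\partial^{\alpha}_x b_k(x,y)\,dy\,.
\end{equation*}

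Finally, I estimate the right-hand side in absolute value. By the hypothesis, every derivative $\partial^{\beta}_y\partial^{\alpha}_x b_k$ with $|\beta|\le 2N$ is bounded on $\mathbb{R}^n\times\mathbb{R}^n$ by a constant depending only on $N$ and $\alpha$; moreover the $y$-integrand vanishes outside $\{|y|\le 2\varepsilon_0\}$, which has finite Lebesgue measure $\omega_n$ depending only on $n$ and $\varepsilon_0$. Hence
\begin{equation*}
(1+|\zeta|^2)^N|\partial^{\alpha}_x\widehat{b}_k(x,\zeta)|\le\omega_n\,\|(I-\Delta_y)^N\partial^{\alpha}_x b_k\|_{L^{\infty}(\mathbb{R}^n\times\mathbb{R}^n)}=:C_{N,\alpha}\,,
\end{equation*}
uniformly in $(x,\zeta)\in\mathbb{R}^n\times\mathbb{R}^n$, which is exactly \eqref{trasformataparziale}.

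There is essentially no serious obstacle here: the argument is the textbook proof that a smooth, compactly supported (in $y$) function has a rapidly decreasing partial Fourier transform in $\zeta$, with the only point to keep track of being the uniformity in $x$. That uniformity is guaranteed precisely because both the $y$-support and the sup-norms of all mixed derivatives of $b_k$ are bounded independently of $x$, a property we inherit from the construction of $b_k$ via the Taylor expansion \eqref{taylor} together with the cut-off modification noted in the footnote after \eqref{supportobk}.
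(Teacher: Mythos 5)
Your proof is correct and takes essentially the same route as the paper: integrate by parts in $y$ to trade powers of $\zeta$ for $y$-derivatives of $b_k$, then use the uniform compact $y$-support and the uniform $L^\infty$ bounds on mixed derivatives. The paper writes $(1+|\zeta|^2)^N$ out by the multinomial theorem rather than as $(I-\Delta_y)^N$, and leaves the $\partial^\alpha_x$ to the reader, but these are only cosmetic differences.
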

\begin{proof}
Since for each $x\in\mathbb{R}^n$, the function $b_k(x,\cdot)$ has compact support (independent of $x$), integrating by parts we get for an arbitrary integer $N>0$
\begin{equation}\label{intperparti}
\begin{array}{ll}
\displaystyle{(1+|\zeta|^2)^N\widehat{b}_k(x,\zeta)=\sum\limits_{|\alpha|\le N}\frac{N!}{\alpha!(N-|\alpha|)!}\int_{\{|y|\le 2\varepsilon_0\}} \zeta^{2\alpha} e^{-i\zeta\cdot y}b_k(x,y)\,dy}\\
\displaystyle{=\sum\limits_{|\alpha|\le N}\frac{N!}{\alpha!(N-|\alpha|)!}(-1)^{|\alpha|}\int_{\{|y|\le 2\varepsilon_0\}} \partial^{2\alpha}_{y}(e^{-i\zeta\cdot y})b_k(x,y)\,dy}\\
\displaystyle{=\sum\limits_{|\alpha|\le N}\frac{N!}{\alpha!(N-|\alpha|)!}(-1)^{|\alpha|}\int_{\{|y|\le 2\varepsilon_0\}} e^{-i\zeta\cdot y}\partial^{2\alpha}_y b_k(x,y)\,dy\,,}
\end{array}
\end{equation}
from which \eqref{trasformataparziale} trivially follows, using that $y-$derivatives of $b_k(x,y)$ are bounded in $\mathbb{R}^n\times\mathbb{R}^n$ by a positive constant independent of $x$.
\end{proof}
\noindent
We are going now to analyze the behavior at infinity of the derivatives of $q_{k,m}(x,\xi,\gamma)$ defined as in \eqref{qk}. For all multi-indices $\alpha,\beta\in\mathbb{N}^n$, differentiation under the integral sign in \eqref{qk} gives
\begin{equation}\label{derivatasottosgn}
\begin{array}{ll}
\partial^{\alpha}_{\xi}\partial^{\beta}_xq_{k,m}(x,\xi,\gamma)=(2\pi)^{-n}\int\partial^{\beta}_x\widehat{b}_k(x,\eta)\partial^{\alpha+e^k}\lambda^{m,\gamma}(\xi-\eta)\,d\eta\,,
\end{array}
\end{equation}
where $e^k:=(0,\dots,\underbrace{1}_{k},\dots,0)$. Then using that $\lambda^{m,\gamma}$ is a symbol of order $m$ together with \eqref{trasformataparziale} and combining with \eqref{gammapeetre}, for $s=m-1-|\alpha|$, we obtain
\begin{equation}\label{stimediffsottosgn}
\begin{array}{ll}
\displaystyle{|\partial^{\alpha}_{\xi}\partial^{\beta}_x q_{k,m}(x,\xi,\gamma)|\le C_{N,\beta}C_{m,\alpha}\int \lambda^{-2N}(\eta)\lambda^{m-1-|\alpha|,\gamma}(\xi-\eta)\,d\eta}\\
\displaystyle{\le C_{N,m,\alpha,\beta}\lambda^{m-1-|\alpha|,\gamma}(\xi)\int\lambda^{|m-1-|\alpha||-2N}(\eta)\,d\eta\,,}
\end{array}
\end{equation}
where the integral in the last line is finite, provided that the integer $N$ is taken to be sufficiently large. This provides the estimate \eqref{stimeqk}, with constant $C_{N,m,\alpha,\beta}\int\lambda^{|m-1-|\alpha||-2N}(\eta)\,d\eta$ independent of $\gamma$.


\section{Some examples from MHD}\label{appB}
\subsection{Current-vortex sheets}\label{appB.1}

Consider the equations of ideal compressible MHD:
\begin{equation}
\left\{
\begin{array}{l}
\partial_t\rho  +{\rm div}\, (\rho {v} )=0,\\[3pt]
\partial_t(\rho {v} ) +{\rm div}\,(\rho{v}\otimes{v} -{H}\otimes{H} ) +
{\nabla}q=0, \\[3pt]
\partial_t{H} -{\nabla}\times ({v} {\times}{H})=0,\\[3pt]
\partial_t\bigl( \rho e +\frac{1}{2}|{H}|^2\bigr)+
{\rm div}\, \bigl((\rho e +p){v} +{H}{\times}({v}{\times}{H})\bigr)=0,
\end{array}
\right.
\label{mhd1}
\end{equation}
where $\rho$ denotes density, $v\in\mathbb{R}^3$ plasma velocity, $H \in\mathbb{R}^3$ magnetic field, $p=p(\rho,S )$ pressure, $q =p+\frac{1}{2}|{H} |^2$ total pressure, $S$ entropy, $e=E+\frac{1}{2}|{v}|^2$ total energy, and  $E=E(\rho,S )$ internal energy. With a state equation of gas, $\rho=\rho(p ,S)$, and the first principle of thermodynamics, \eqref{mhd1} is a closed system. The system is symmetric hyperbolic provided
$\rho  >0, \rho_p >0. $
System \eqref{mhd1} is supplemented by the divergence constraint
\begin{equation}
{\rm div}\, {H} =0
\label{mhd2}
\end{equation}
on the initial data.

{\it Current-vortex sheets} are weak solutions of \eqref{mhd1} that are smooth on either side of a smooth hypersurface  $\Gamma(t)=\{x_1=\psi(t,x')\}$ in $[0,T]\times\Omega$, where $\Omega\subset\R^3,\, x'=(x_2,x_3)$ and that satisfy
suitable jump conditions at each point of the front $\Gamma (t)$.

Let us denote $\Omega^\pm(t)=\{x_1\gtrless \psi(t,x')\}$, where $\Omega=\Omega^+(t)\cup\Omega^-(t)\cup\Gamma(t)$; given any
function $g$ we denote $g^\pm=g$ in $\Omega^\pm(t)$ and $[g]=g^+_{|\Gamma}-g^-_{|\Gamma}$ the jump across
$\Gamma (t)$.

One looks for smooth solutions $(v^\pm,H^\pm,p^\pm,S^\pm)$ of \eqref{mhd1} in $\Omega^\pm(t)$ such that $\Gamma (t)$
is a tangential discontinuity, namely the plasma does not flow through the discontinuity front and the magnetic
field is tangent to $\Gamma (t)$, see e.g. \cite{landau}, so that the boundary conditions take the form
\begin{equation}
\label{RH}
\dt \psi =v^\pm \cdot N \, ,\quad H^\pm \cdot N=0 \, ,\quad [q]=0 \quad {\rm on }\;\Gamma (t) \, ,
\end{equation}
with $N:=(1,-\ddue \psi,-\dtre \psi)$. Because of the possible jump in the tangential velocity and magnetic fields, there is a concentration of vorticity and current along the discontinuity $\Gamma (t)$. Notice that the function $\psi$ describing the discontinuity front is part of
the unknown of the problem, i.e. this is a free boundary problem. The well-posedness of the nonlinear problem \eqref{mhd1}--\eqref{RH} is shown in \cite{ChenWang,trakhinin09arma} under the assumption of the structural stability condition $|H^+\times H^-|>0$ on $\Gamma (t)$.

After a change of independent variables that \lq\lq flattens\rq\rq the boundary, a linearization around a suitable basic state and some reductions, Trakhinin \cite{trakhinin05,trakhinin09arma} (see also \cite{ChenWang}) gets a linearized problem for $u=(v^\pm,H^\pm,p^\pm,S^\pm)$ of the form \eqref{sbvp} with $\mathcal{L}_\gamma$ as in \eqref{pdo}, $b_\gamma$ as in \eqref{oprt_bordo1}, $\mathcal{M}_\gamma$  as in \eqref{oprt_bordo2} but with $M_2=M_3=0$, that is the boundary operator has order zero in $u$. Moreover, because of the special reductions, the boundary data are zero, i.e. $g=0$ in \eqref{sbvp2}, and $F$ in \eqref{sbvp1} is such that the solution satisfies some additional constraints.

It is proved that the solution of the linearized problem satisfies an a priori estimate similar to \eqref{h1estimate} (with $g=0$).
Instead, the linearized problem with general data $F$ and $g\not=0$ admits an a priori estimate with a loss of two derivatives, see \cite{trakhinin09arma} for details.
\newline
Analogous results for incompressible current-vortex sheets are obtained in \cite{catania13} and \cite{morandotrakhinintrebeschi}.


\subsection{Plasma-vacuum 1}\label{appB.2}

Using the previous notations, let $\Omega^+(t)$ and $\Omega ^-(t)$ be space-time domains occupied by the plasma and the vacuum respectively. That is, in the domain
$\Omega^+(t)$ we consider system \eqref{mhd1}, \eqref{mhd2} governing the motion of an ideal plasma and in the domain $\Omega^-(t)$ we consider the so-called {\it pre-Maxwell dynamics}
\begin{equation}
\nabla \times \mathcal{H} =0,\qquad {\rm div}\, \mathcal{H}=0,\label{mhd3}
\end{equation}
describing the vacuum magnetic field $\mathcal{H}\in\mathbb{R}^3$, see \cite{Goed}.

The plasma variable $(v,H,p,S)$ is connected with the vacuum magnetic field
$\mathcal{H}$  through the relations
\cite{Goed}
\begin{equation}
\dt \psi =v \cdot N,\quad  H\cdot N=0, \quad  \mathcal{H}\cdot N=0 \, ,\quad [q]=0,\quad \mbox{on}\ \Gamma (t),
\label{mhd4}
\end{equation}
where the jump of the total pressure across the interface is $[q]= q|_{\Gamma}-\frac{1}{2}|\mathcal{H}|^2_{|\Gamma}$.
The well-posedness of the nonlinear problem \eqref{mhd1}, \eqref{mhd2}, \eqref{mhd3}, \eqref{mhd4} is shown in \cite{secchitrakhinin,SeTrNl} under the assumption of the structural stability condition $|H \times \mathcal{H}|>0$ on $\Gamma (t)$.

As in the case of current-vortex sheets, after a change of independent variables that \lq\lq flattens\rq\rq the boundary, a linearization around a suitable basic state and some reductions, the authors obtain a linearized problem for $u=(v,H,p,S,\mathcal{H})$ of the form \eqref{sbvp} with $\mathcal{L}_\gamma$ as in \eqref{pdo}, $b_\gamma$ as in \eqref{oprt_bordo1}, $\mathcal{M}_\gamma$  as in \eqref{oprt_bordo2} with $M_2=M_3=0$, that is the boundary operator has order zero in $u$. Moreover, because of the special reductions, the boundary data are zero, i.e. $g=0$ in \eqref{sbvp2}, and $F$ in \eqref{sbvp1} is such that the solution satisfies some additional constraints.

In \cite{secchitrakhinin} it is proved that the solution of the linearized problem satisfies an a priori estimate similar to \eqref{h1estimate} (with $g=0$). The vacuum magnetic field $\mathcal{H}$ is estimated in the standard Sobolev space $H^1$ with full regularity.
Instead, the linearized problem with general data $F$ and $g\not=0$ admits an a priori estimate similar to \eqref{h1h2estimate}, with loss of one derivative in $F$ and $g$, see \cite{SeTrNl}.
\newline
For similar results in the case of the incompressible plasma - vacuum problem, see \cite{motratrevacuum}.

\subsection{Plasma-vacuum 2}\label{appB.3}

In the domain
$\Omega^+(t)$ we consider system \eqref{mhd1}, \eqref{mhd2} governing the motion of an ideal plasma and in the domain $\Omega^-(t)$ we consider the Maxwell equations
\begin{equation}\label{eq:Maxwell}
\begin{cases}
\dt \mathcal{H} + \nabla \times \mathcal{E} = 0  \,, \\
\dt \mathcal{E} - \nabla \times \mathcal{H} = 0  \,, \\
{\rm div}\, \mathcal{H} \,=  \,
{\rm div}\,  \mathcal{E} = 0 \,,
\end{cases}
\end{equation}
describing the vacuum magnetic and electric fields $\mathcal{H},\mathcal{E}\in\mathbb{R}^3$, see \cite{Goed}.

The plasma variable $(v,H,p,S)$ is connected with the vacuum variable
$(\mathcal{H},\mathcal{E})$  through the relations
\cite{Goed}
\begin{equation}
\dt \psi =v \cdot N,\quad  H\cdot N=0, \quad  \mathcal{H}\cdot N=0 \, ,\quad [q]=0,\quad  N \times \mathcal{E} = (N \cdot v)  \mathcal{H},\quad \mbox{on}\ \Gamma (t),
\label{mhd5}
\end{equation}
where the jump of the total pressure across the interface is $[q]= q|_{\Gamma}-\frac{1}{2}|\mathcal{H}|^2_{|\Gamma}+\frac12|\mathcal{E}|^2_{|\Gamma}$.

The stability of the linearized problem obtained from \eqref{mhd1}, \eqref{mhd2}, \eqref{eq:Maxwell}, \eqref{mhd5} is shown in \cite{CDAS} under suitable stability conditions on $\Gamma (t)$.
The authors obtain a linearized problem for $u=(v,H,p,S,\mathcal{H},\mathcal{E})$ of the form \eqref{sbvp} with $\mathcal{L}_\gamma$ as in \eqref{pdo}, $b_\gamma$ as in \eqref{oprt_bordo1}, $\mathcal{M}_\gamma$  as in \eqref{oprt_bordo2} with $M_2=M_3=0$, that is the boundary operator has order zero in $u$. Moreover, because of the special reductions, the boundary data are zero, i.e. $g=0$ in \eqref{sbvp2}, and $F$ in \eqref{sbvp1} is such that the solution satisfies some additional constraints.
It is proved that the solution of the linearized problem satisfies an a priori estimate similar to \eqref{h1estimate} (with $g=0$).
The vacuum variable
$(\mathcal{H},\mathcal{E})$ is estimated in the standard Sobolev space $H^1$ with full regularity.

\subsection{Contact discontinuities}\label{appB.4}
We consider the equations of ideal compressible MHD \eqref{mhd1} for two-dimensional planar flows with respect to the unknown vector $U=(p, v, H, S)$, with $v(t,x)=(v_1,v_2)\in\mathbb R^2$, $H(t,x)=(H_1,H_2)\in\mathbb R^2$, $x=(x_1,x_2)$. For simplicity, let us assume that the plasma obeys the state equation of a polytropic gas
\begin{equation}\label{polytropic}
\rho(p, S)=Ap^{1/\gamma}e^{-S/\gamma}\,,\quad A>0\,,\,\,\,\gamma>1\,.
\end{equation}
Following the notations already introduced in Section \ref{appB.1}, {\it contact discontinuities} are weak solutions of \eqref{mhd1}, that are smooth on either side of a smooth hypersurface  $\Gamma(t)=\{x_1=\psi(t,x_2)\}$ in $[0,T]\times\R^2$, satisfying at each point of the front $\Gamma (t)$ suitable jump conditions. More precisely, one looks for smooth solutions $U^\pm$ of \eqref{mhd1} in $\Omega^\pm(t):=\{x_1\gtrless\psi(t,x_2)\}$, satisfying on $\Gamma(t)$ the following conditions
\begin{equation}\label{cd}
v^+_N - \partial_t \psi =0\,,\quad \left[v\right]=0\,,\quad \left[H\right]=0\,,\quad H_N^\pm\neq 0\,,\quad \left[p\right]=0\,,
\end{equation}
where $N:=(1, -\partial_2\psi)$ is the space normal to the front $\Gamma(t)$, $H_N=H_1-\partial_2\psi H_2$.
\newline
After a change of independent variables that ``flattens'' the boundary, in \cite{contactmtt2013} the authors perform a linearization of the free-boundary problem \eqref{mhd1}, \eqref{cd} for contact discontinuities, around a suitable sufficiently smooth basic state $(\hat p, \hat v, \hat H, \hat S, \hat\varphi)$, obeying the ``stability'' condition
\begin{equation}\label{rt}
\left[\partial_1\hat p\right]\ge c_0>0\,,\quad \mbox{on}\,\,\{x_1=\hat\varphi(t,x_2)\}\,.
\end{equation}
Under the preceding assumptions, the linearized problem can be recast in the form of \eqref{sbvp} with $\mathcal{L}_\gamma$ as in \eqref{pdo}, $b_\gamma=0$ and $\mathcal{M}_\gamma$ of order one in $U$ as in \eqref{oprt_bordo2}. Moreover, because of the special reductions, the boundary data are zero, i.e. $g=0$ in \eqref{sbvp2}, whereas the only nonzero components of $F$ in \eqref{sbvp1} are the ones corresponding to the equation for $v$.
\newline
In \cite{contactmtt2013} it is proved that the solution of the above linearized problem satisfies an a priori estimate in the Sobolev space $H^1_{tan}$ similar to \eqref{h1estimate}.
\bibliographystyle{plain}

\end{document}